\DeclareMathOperator{\curl}{curl}
\newtheorem{thm}{Theorem}[section]
\newtheorem{lem}[thm]{Lemma}
\newtheorem{theorem}[thm]{Theorem}
\newtheorem{lemma}[thm]{Lemma}
\newtheorem{proposition}[thm]{Proposition}
\newtheorem{corollary}[thm]{Corollary}
\theoremstyle{remark}
\newtheorem{rem}[thm]{Remark}
\newcommand{\nb}{\nabla}
\newcommand{\ld}{\lambda}
\newcommand{\Int}[2]{\displaystyle{\int_{#1}^{#2}}}
\newcommand{\Ab}{\mathbf {A}}
\newcommand{\R}{\mathbb{R}}
\newcommand{\clr}{\color{red}}
\newcommand{\N}{\mathbb{N}}
\newcommand{\Sum}[2]{\displaystyle{\sum_{#1}^{#2}}}
\newcommand{\norm}[1]{\left\|#1\right\|}
\def\sig#1{\vbox{\hsize=5.5cm
\kern2cm\hrule\kern1ex
\hbox to \hsize{\strut\hfil #1 \hfil}}}
\newcommand\signatures[4]{%
\vspace{3cm}
\hbox to \hsize{\hfil #1, \today\hfil}
\vspace{3cm}
\hbox to \hsize{\quad#2\hfil\hfil #3\quad}
\vspace{3cm}
\hbox to \hsize{\hfil#4\hfil}}
\numberwithin{equation}{section}
\title[Trace asymptotics]{Semi-classical trace asymptotics for  magnetic Schr\"odinger operators with Robin condition}
\author[A. Kachmar]{Ayman Kachmar}
\address[A. Kachmar]{Lebanese University, Department of Mathematics, Hadath, Lebanon}
\email{ayman.kashmar@liu.edu.lb}
\author[M. Nasrallah]{Marwa Nasrallah}
\address[M. Nasrallah]{Lebanese International University, School of Arts and Sciences, Rayak, Lebanon}
\email{marwa.nasrallah@liu.edu.lb}
\begin{document}
\maketitle
\begin{abstract}
We compute the sum and number of eigenvalues for a certain class of
magnetic Schr\"odinger operators in a domain with boundary.
Functions in the domain of the operator satisfy  a (magnetic) Robin
condition. The calculations are valid in the semi-classical
asymptotic limit and the eigenvalues concerned correspond to
eigenstates localized near the boundary of the domain. The formulas
we derive display the influence of the boundary and the boundary
condition and are valid under a  weak regularity assumption of the
boundary function. Our approach relies on three main points:
reduction to the boundary;  construction of boundary coherent
states; handling the boundary term as a surface electric potential
and controlling the errors by various Lieb-Thirring inequalities.
\end{abstract}
\section{Introduction}

Recently, many papers display the influence of the Robin condition
on the spectrum of the  Laplacian. In planar domains, the papers
\cite{GS, LP, Pan} and references therein contain asymptotics of the
principal eigenvalue.  The tunneling effect for planar domains with
corners is discussed in the paper \cite{HP}. In higher dimensions,
the low-lying eigenvalues are studied in \cite{PanP}, where the
effect of the boundary mean curvature is made precise. Trace
semi-classical asymptotics are obtained in \cite{FG}. In all the
aforementioned papers, there is no magnetic field and the function
in the boundary condition is supposed smooth. The new issue
addressed in this paper is that we include a magnetic field and we
do not assume smoothness of the boundary function in \eqref{eq:bc}
below. The discussion in this paper is limited for planar domains.
Extensions to higher dimensions does not seem trivial; \cite{N}
contains results for the Neumann condition in 3D domains.

Let $\Omega\subset\R^{2}$ be an open domain with a smooth $C^3$ and
compact boundary $\Gamma=\partial\Omega$. We suppose that the
boundary $\partial\Omega$ consists of a finite number of connected
components. The domain $\Omega$ is allowed to be an {\it interior}
or {\it exterior} domain. By smoothness of the boundary
$\partial\Omega$, we can define the unit outward normal vector $\nu$
of $\partial\Omega$.

The magnetic field is defined via a vector field (magnetic
potential). Let $A\in C^{2}(\overline\Omega;\R^{2})$. The magnetic
field is
\begin{equation}\label{eq:mf}
B:= {\rm curl}\, A\,.
\end{equation}

Consider a function $\gamma\in L^3(\partial\Omega)$, a number
$\alpha\geq 1/2$ and a parameter $h>0$. The parameter $h$ is called
the {\it semi-classical} parameter and we shall be concerned with the
asymptotic limit of various quantities when the semi-classical
parameter tends to $0$.

The self-adjoint magnetic Schr\"odinger operator
\begin{equation}\label{Shr-op-Gen}
\mathcal{P}^{\alpha,\gamma}_{h,\Omega}=(-ih\nabla+A)^{2},
\end{equation}
with a boundary condition of the third type (Robin condition)
\begin{equation}\label{eq:bc}
\nu\cdot(-ih\nabla+A)u+h^\alpha\gamma\,u=0\quad{\rm on}~\partial\Omega\,,
\end{equation}
can be defined by the Friedrich's Theorem via the closed
semi-bounded quadratic form,
\begin{equation}\label{QF-Gen}
 \mathcal{Q}^{\alpha,\gamma}_{h,\Omega}(u):=\norm{(-ih\nabla+A)u}^{2}_{L^{2}(\Omega)}+h^{1+\alpha}\Int{\partial\Omega}{}\gamma(x)|u(x)|^{2}dx\,.
\end{equation}
The assumption  $\gamma\in L^3(\partial\Omega)$ ensures that the
quadratic form in \eqref{QF-Gen} is semi-bounded. Since this does
not follow in a straightforward manner, we will recall the main
points of the classical proof in the appendix.

As is revealed from \eqref{eq:bc} and \eqref{QF-Gen}, the role of
the parameter $\alpha$ is to control the strength of the boundary
condition. Formally, we shall deal with the boundary term in
\eqref{QF-Gen} as a surface electric potential. This analogy is
already observed in \cite{FL}.

The quantity
\begin{equation}\label{eq:infb}
b=\inf_{x\in\overline\Omega}B(x)\,,
\end{equation}
is critical in the analysis of the spectrum of the operator
$\mathcal{P}^{\alpha,\gamma}_{h,\Omega}$. If  the domain $\Omega$ is
an exterior domain, i.e. the complement of a bounded subset, then
the operator $\mathcal{P}^{\alpha,\gamma}_{h,\Omega}$ has an
essential spectrum. In this case, the spectrum below $bh$ is
discrete, see e.g. \cite{KP}. When the domain $\Omega$ is an
interior domain, i.e. bounded, then by Sobolev embedding, the
operator $\mathcal{P}^{\alpha,\gamma}_{h,\Omega}$ is with compact
resolvent and its spectrum is purely discrete. If
$\sigma(\mathcal{P}^{\alpha,\gamma}_{h,\Omega})\cap(-\infty,bh)\not=\emptyset$,
then,
$$\sigma(\mathcal{P}^{\alpha,\gamma}_{h,\Omega})\cap(-\infty,bh)=\{e_1(h),e_2(h),\cdots\}\,,$$
where the terms of the sequence $(e_j(h))$ are eigenvalues of the
operator $\mathcal{P}^{\alpha,\gamma}_{h,\Omega}$ listed in
increasing order and by counting the multiplicity.

Let $\lambda\leq bh$. According to the aforementioned discussion, we
can introduce the two quantities,
\begin{align}
&E(\lambda;h,\gamma,\alpha)=-{\rm tr}\Big(\mathcal{P}^{\alpha,\gamma}_{h,\Omega}-bh\Big)_{-}=\sum_j\Big(e_j(h)-\lambda h\Big)_-\,,\label{eq:en}\\
&N(\lambda;h,\gamma,\alpha)={\rm tr}\Big(\mathbf 1_{(-\infty,\lambda h)}\left(\mathcal{P}^{\alpha,\gamma}_{h,\Omega}\right)\Big)\,.\label{eq:nb}
\end{align}
Notice that $E(\lambda;h,\alpha)$ is the sum of the absolute value
of the {\it negative} eigenvalues of
$\mathcal{P}^{\alpha,\gamma}_{h,\Omega}-\lambda h$ counting
multiplicities while the number of these eigenvalues is
$N(\lambda;h,\gamma,\alpha)$. In physics, $E(\lambda;h,\alpha)$ can
be interpreted as the energy of non-interacting fermionic particles
in $\Omega$ at chemical potential $\lambda h$ \cite{FG}.

The Lieb-Thirring inequality will ensure that the sum
$E(\lambda;h,\gamma,\alpha)$ is finite for all $\lambda\leq b$. This
will be discussed further in Section~\ref{sec:lb}. Concerning the
number of eigenvalues, $N(\lambda;h,\gamma,\alpha)$ is finite for
all $\lambda<b$. Actually, this energy level is strictly lower than
the bottom of the essential spectrum. For exterior domains, we may
have that the eigenvalues accumulate near $bh$, i.e.
$N(\lambda;h,\gamma,\alpha)=\infty$. In fact, it is proved that this
is the case when the magnetic field $B(x)$ is constant, see
\cite{CKP}.

The behavior of the two quantities in \eqref{eq:en} and
\eqref{eq:nb} in the semiclassical regime, i.e. when the
semiclassical parameter $h$ goes to $0$, is studied for the Neumann
problem in \cite{Fr} and \cite{Fo-Ka}. The Neumann problem
corresponds to $\gamma$ being identically $0$  in \eqref{eq:bc}.
When the magnetic field $B(x)=b$ is constant, then the results in
\cite{Fr} and \cite{Fo-Ka} assert that, if $h\to0_+$, then,
\begin{align}
&N(\lambda;h,\gamma=0,\alpha)= h^{-1/2}\,c_1(\lambda)+h^{-1/2}\,o(1)\,,\label{eq:nbFr}\\
&E(\lambda;h,\gamma=0,\alpha)=h^{1/2}\,c_2(\lambda)+h^{1/2}\,o(1)\,.\label{eq:nbFK}
\end{align}
The formula in \eqref{eq:nbFr} is valid for all $\lambda<b$ while
that in \eqref{eq:nbFK} is valid for all $\lambda\leq b$. It is
pointed in \cite{Fo-Ka} that the formulas in \eqref{eq:nbFr} and
\eqref{eq:nbFK} are equivalent when $\lambda<b$.

The quantities $c_1(\lambda)$ and $c_2(\lambda)$ are defined by
explicit expressions involving  spectral quantities for a harmonic
oscillator on the semi-axis. In \cite{Ka4}, it is derived an
analogue of \eqref{eq:nbFr} valid for a general function $\gamma\in
C^\infty(\partial\Omega)$ and constant magnetic fields. The key
issue in \cite{Ka4} was the analysis of a modified harmonic
oscillator on the semi-axis and a standard approximation of the
function $\gamma$ by a constant. The smoothness of the function
$\gamma$ vindicates the approximation of $\gamma$ by a constant
value as long as the approximation is done in a small domain.

In this paper, we aim to obtain analogues of \eqref{eq:nbFr} and
\eqref{eq:nbFK} under the relaxed assumptions that the magnetic
field is variable and the function $\gamma$ is no more smooth but
simply in $L^3(\partial\Omega)$. (This is the assumption needed to
define the self-adjoint operator in \eqref{Shr-op-Gen}). Also, we
add to the results of \cite{Ka4} by establishing a formula for
$E(\lambda;h,\gamma,\alpha)$ valid in the extended range
$\lambda\in(-\infty,b]$.

The approach we follow is by carrying out a reduction to a thin
boundary layer. This is easy to do. After localization in the thin
boundary layer, we localize in small sub-domains of the boundary
layer. In each small sub-domain, the operator is reduced to a one
defined with a constant magnetic field and a constant $\gamma$. The
reduced operator is defined in the half-plane. The reduction to a
constant magnetic field  is quiet standard as in \cite{Fr} and
\cite{Fo-Ka}. The non-trivial point is to reduce to a constant
$\gamma$ since the smoothness of $\gamma$ is dropped. We do this by
dealing with $\gamma$ as being a {\it surface} electric potential.
With this point of view, we borrow the methods in \cite{LSY} that
allow to approximate a non-smooth electrical potential by a smooth
one, and then one passes from the smooth potential to the constant
potential in the standard manner. Many errors will arise here. These
are controlled by various Lieb-Thirring inequalities, notably the
ones in \cite{Er-So, Sob, LW} and a remarkable inequality obtained
in \cite{Fo-Ka} valid in the torus.

We proceed in the statement of the main result of this paper. We
will need some notation regarding a harmonic oscillator in the
semi-axis. For $(\gamma,\xi)\in \R^{2}$, we denote by
\begin{equation}\label{Op-h-g-x}
  \mathfrak{h}[\gamma,\xi]= -\partial_{t}^{2}+(t-\xi)^{2}\quad {\rm in}\quad L^{2}(\R_{+}),
\end{equation}
the self-adjoint differential operator in $L^{2}(\R_+)$ associated
with the boundary condition $u^{\prime}(0)=\gamma u(0)$.
 The increasing sequence of eigenvalues of $\mathfrak{h}[\gamma,\xi]$ is
 $\{\mu_{j}(\gamma,\xi)\}_{j}$. By Sturm-Liouville theory, these
 eigenvalues are known to be simple and smooth functions of $\gamma$
 and $\xi$. These facts will be recalled precisely in a
 separate section.

In the following, $(x)_{-}=\max(-x,0)$ and $(x)_{+}=\max(x,0)$ denote the negative, respectively positive, part of a number $x\in\R$.

Our main result is

\begin{theorem}\label{thm:KN}
Suppose that the magnetic field satisfies,
$$b=\inf_{x\in\overline\Omega} B(x)>0\,.$$
Let $\lambda\leq b$, $\alpha\geq 1/2$ and $\gamma\in
L^3(\partial\Omega)$.  There holds:
\begin{itemize}
\item If $\alpha>1/2$,  then,
\[
\lim_{h\to0_+} \Big(h^{-1/2}\,E(\lambda;h,\gamma,\alpha)\Big)=\frac{1}{2\pi}\int_{\partial\Omega}\int_{\R}B(x)^{3/2}\Big(\mu_{1}(0,\xi)-\frac{\lambda}{B(x)}\Big)_{-}d\xi ds(x)\,.
\]
\item
If $\alpha=1/2$ and $\gamma\in L^\infty(\partial\Omega)$, then,
\begin{align*}
&\lim_{h\to0_+}
\Big(h^{-1/2}\,E(\lambda;h,\gamma,\alpha)\Big)\\
&\qquad\qquad=\frac{1}{2\pi}\sum_{p=1}^{\infty}\int_{\partial\Omega}\int_{\R}B(x)^{3/2}\Big(\mu_{p}\left(B(x)^{-1/2}\gamma(x),
\xi\right)-\frac{\lambda}{B(x)}\Big)_{-}d\xi ds(x).
\end{align*}
\end{itemize}
Here $ds(x)$ denotes integration with respect to arc-length along
the boundary $\partial\Omega$, and $E(\lambda;h,\gamma,\alpha)$ is
introduced in \eqref{eq:en}.
\end{theorem}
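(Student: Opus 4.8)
The plan is to prove the theorem by a sequence of reductions, each controlled quantitatively by Lieb–Thirring-type bounds, following the scheme announced in the introduction. First I would localize to a thin boundary layer: using a partition of unity $\chi^2 + \tilde\chi^2 = 1$ with $\chi$ supported in a tube $\{\dist(x,\partial\Omega) < h^\rho\}$ for a suitable $\rho \in (0,1/2)$, the IMS localization formula reduces $E(\lambda;h,\gamma,\alpha)$ up to errors of lower order in $h$ to the contribution of the operator localized near $\Gamma$, because away from the boundary the operator is bounded below by $bh - \mathcal{O}(h^2\text{-localization cost})$ and any spectrum there sits above $\lambda h$. Next, I would introduce boundary normal coordinates $(s,t)$ (arc-length along $\Gamma$ and normal distance $t$), rescale $t = h^{1/2}\tau$, and gauge away the magnetic potential to leading order so that in each small arc of size $h^{1/2-\epsilon}$ the operator looks like $h$ times $\mathfrak{h}[\gamma_{\mathrm{loc}},\xi]$-type model on the half-line $\R_+$, with the magnetic field frozen at its local value $B(x)$ and $\gamma$ appearing through the rescaled boundary term $h^{1+\alpha}\gamma = h \cdot h^\alpha\gamma$.

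The crucial novelty, and what I expect to be the main obstacle, is the reduction in $\gamma$. When $\alpha > 1/2$ the boundary term carries an extra power $h^{\alpha-1/2} \to 0$ relative to the natural scale, so heuristically $\gamma$ can be replaced by $0$; but making this rigorous with $\gamma$ only in $L^3(\partial\Omega)$ requires treating $h^{1+\alpha}\gamma\,|u|^2$ as a singular surface potential and absorbing it via a trace/Sobolev inequality on the half-line combined with the diamagnetic-type Lieb–Thirring bounds of \cite{Er-So, Sob, LW}; the $L^3$ hypothesis is exactly the borderline integrability that keeps the form semibounded (as recalled in the appendix) and keeps the resulting error $o(h^{1/2})$ in the trace. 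When $\alpha = 1/2$ and $\gamma \in L^\infty$, the boundary term survives at leading order, and one must approximate the bounded but non-smooth $\gamma$ by a piecewise-constant (or smooth, then constant on each small arc) function; here I would adopt the strategy of \cite{LSY} for approximating a rough electric potential by a regularized one, with the error in the eigenvalue sum again estimated by a Lieb–Thirring inequality — and on the periodic boundary $\Gamma \cong$ torus the sharp inequality of \cite{Fo-Ka} is what gives the clean $o(h^{1/2})$ remainder rather than a merely $\mathcal{O}(h^{1/2})$ one.

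Once $\gamma$ and $B$ are constant on each small arc $I_k$ of length $\ell = h^{1/2-\epsilon}$, the operator on $I_k \times \R_+$ decouples after a partial Fourier transform in $s$ (periodized to the arc), and a direct computation gives
\[
\sum_j \bigl(e_j - \lambda h\bigr)_- \approx \frac{\ell}{2\pi}\, h^{1/2} B_k^{3/2} \int_\R \sum_p \Bigl(\mu_p\bigl(B_k^{-1/2}\gamma_k,\xi'\bigr) - \frac{\lambda}{B_k}\Bigr)_- \, d\xi',
\]
where the factor $h^{1/2}$ comes from the $t$-rescaling, the $\frac{\ell}{2\pi}$ from the density of Fourier modes on an arc of length $\ell$, and the $B_k^{3/2}$ and the rescaling $\xi \mapsto B_k^{-1/2}\xi$ from scaling out the magnetic field in $\mathfrak{h}[\gamma,\xi]$ (recorded in the section on the model operator). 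Summing over $k$ turns $\sum_k \ell\,(\cdot)$ into the Riemann sum for $\int_{\partial\Omega} ds(x)$, and in the case $\alpha > 1/2$ the argument $B_k^{-1/2}\gamma_k$ is replaced by $0$ and only the term $p=1$ survives below $bh$ since $\mu_p(0,\xi) \ge 1$ for $p \ge 2$. Finally I would check that all accumulated errors — the layer localization, the arc localization (curvature and magnetic-drift terms of size $h^\epsilon$ relative to the main term), the $\gamma$-regularization, and the Riemann-sum error — are $o(h^{1/2})$, which fixes the admissible window for the exponents $\rho$ and $\epsilon$; the matching lower bound is obtained by the standard variational argument using the same model eigenfunctions as trial states, cut off to the arcs.
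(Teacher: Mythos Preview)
Your proposal is correct and follows essentially the same route as the paper: IMS reduction to a thin boundary layer, boundary coordinates plus gauge to freeze the magnetic field, treatment of the $\gamma$-term as a surface potential controlled by the magnetic Lieb--Thirring inequality (the paper's Theorem~\ref{Lb-thm}) in the regime $\alpha>1/2$, and the LSY-style mollification $\gamma\to\gamma_a$ followed by freezing when $\alpha=1/2$, with the residual error absorbed by the cylinder energy bound (Lemma~\ref{energy}). The one noticeable implementation difference is that the paper does \emph{not} chop the boundary into discrete arcs $I_k$ and pass through a Riemann sum; instead it uses a \emph{continuous} sliding localization $\psi_h(x;\sigma)$ integrated over $\sigma\in[0,|\partial\Omega|)$ (see \eqref{Def-psih} and \eqref{eq:bnd}), and for the matching bound builds an explicit density matrix $\Gamma$ out of the boundary coherent states $f_p(\cdot;h,\sigma,\xi)$ integrated over $(\sigma,\xi)$ (see \eqref{eq:Gamma}), rather than periodizing a Fourier transform on each arc. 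The continuous version buys a cleaner passage to the limit for non-smooth $\gamma$ (no Riemann-sum error to track separately) and makes the density-matrix condition $0\le\Gamma\le 1+o(1)$ an easy Plancherel computation (Lemma~\ref{Pro-dm}); your discrete scheme would also work but requires a bit more bookkeeping at the arc endpoints. Otherwise the two arguments coincide in structure and in the role played by each cited inequality.
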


The results in Theorem~\ref{thm:KN} display the strength of the
boundary condition in \eqref{eq:bc}. We observe that the influence
of the Robin condition is not strong when $\alpha>\frac12$, since
the leading behavior of $E(\lambda;h,\gamma,\alpha)$ is essentially
the same as that for the Neumann condition (i.e. $\gamma=0$).

The sum
\begin{equation}\label{eq:sum}
\sum_{p=1}^{\infty}\int_{\R}\Big(\mu_{p}\left(\gamma,
\xi\right)-1\Big)_{-}d\xi\end{equation} is actually a sum of a
finite number of terms (for every fixed $\gamma$). The expression in
\eqref{eq:sum} is a continuous function of $\gamma$. This will be
proved in a separate section of this paper. Thus, we observe that
the terms appearing in Theorem~\ref{thm:KN} are well defined.

Due to the implicit nature of the quantity in \eqref{eq:sum}, it
seems hard to prove that the functional
$$\mathcal F(\gamma)=\sum_{p=1}^{\infty}\int_{\partial\Omega}\int_{\R}B(x)^{3/2}\Big(\mu_{p}\left(B(x)^{-1/2}\gamma(x),
\xi\right)-\frac{\lambda}{B(x)}\Big)_{-}d\xi ds(x)$$ is continuous
in $L^1(\partial\Omega)$. If this continuity is true, then the
result in Theorem~\ref{thm:KN} continues to hold under the relaxed
assumption that $\alpha=\frac12$ and $\gamma\in
L^3(\partial\Omega)$. This will be clear in the proof we provide to
Theorem~\ref{thm:KN}.

The methods we use do not allow us to obtain versions of
Theorem~\ref{thm:KN} valid for $\alpha<\frac12$. In this specific
regime, the sign of the function $\gamma$ will play a significant
role, as one can observe the results for the first eigenvalue in
\cite{Ka1}. The results in \cite{Ka1} suggest that the localization
to the boundary is very strong when $\alpha<\frac12$ and $\gamma$ is
negative. When $\alpha<\frac12$ and $\gamma>0$, then the effect of
the boundary is weak, and the situation is closer to the Dirichlet
boundary condition, for which the methods in \cite{CFFH} are
relevant.

Differentiation of the formulas in Theorem~\ref{thm:KN} with respect
to $\lambda h$ yields a formula for the number of eigenvalues. See
\cite{Fo-Ka, N} for a precise statement of this technique. The
formulas for the number of eigenvalues are collected in:

\begin{corollary}\label{cor:KN}
Let $\lambda<b$. Under the assumptions of Theorem~\ref{thm:KN},
there holds:
\begin{itemize}
\item If $\alpha>1/2$, then
\begin{equation}\label{FA}
\lim_{h\to0}\Big(h\, N(\lambda;h,\gamma,\alpha)\Big)=\frac{1}{2\pi}\iint_{
\{(x,\xi)\in
 \partial\Omega \times\R~:~B(x)\mu_1(0,\xi)<\lambda\}} B(x)^{1/2}d\xi ds(x)\,.
 \end{equation}
\item If $\alpha=1/2$, then
\begin{equation}\label{SA}
\begin{aligned}
&\lim_{h\to0}\Big(h\,
N(\lambda;h,\gamma,\alpha)\Big)\\
&\qquad=\frac{1}{2\pi}\sum_{p=1}^\infty
\iint_{
\{(x,\xi)\in\partial\Omega\times\R~:~B(x)\mu_p\left(B(x)^{-1/2}\gamma(x),\xi\right)<\lambda\}}
B(x)^{1/2}d\xi ds(x)\,.
\end{aligned}
\end{equation}
\end{itemize}
Here, $N(\lambda;h,\gamma,\alpha)$ is the number of eigenvalues
below $\lambda h$, introduced in \eqref{eq:nb}.
\end{corollary}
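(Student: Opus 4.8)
The strategy is to derive the asymptotics for $N(\lambda;h,\gamma,\alpha)$ by differentiating the already-established asymptotics for $E(\mu;h,\gamma,\alpha)$ from Theorem \ref{thm:KN} with respect to the chemical potential. First I would record the elementary identity relating the counting function and the Riesz mean: for a self-adjoint operator with discrete spectrum below a threshold, one has the layer-cake / Abel-type formula
\[
N(\lambda;h,\gamma,\alpha)=\frac{d}{d(\lambda h)}\,E(\lambda;h,\gamma,\alpha)
\]
in the sense of one-sided (left/right) derivatives, and more robustly the two-sided bound, for $\delta>0$,
\[
\frac{E(\lambda;h,\gamma,\alpha)-E(\lambda-\delta;h,\gamma,\alpha)}{\delta h}\ \le\ N(\lambda;h,\gamma,\alpha)\ \le\ \frac{E(\lambda+\delta;h,\gamma,\alpha)-E(\lambda;h,\gamma,\alpha)}{\delta h}\,,
\]
which follows from $(e_j-(\lambda+\delta)h)_- - (e_j-\lambda h)_- \le \delta h\,\mathbf 1_{e_j<\lambda h}\le (e_j-\lambda h)_- - (e_j-\delta h-\lambda h)_-$ summed over $j$. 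This reduces the problem to computing the $\lambda$-derivative of the limiting functional in Theorem \ref{thm:KN}.

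Next I would perform that differentiation on the right-hand side of Theorem \ref{thm:KN}. Multiplying the stated limit by $h^{-1/2}$ and by $h$ gives, after the discrete-derivative procedure above, a factor $h^{-1/2}\cdot h^{1/2}=1$ on the limit side and $h$ on the $N$ side, consistent with the $h\,N$ normalization in \eqref{FA}–\eqref{SA}. For the $\alpha>1/2$ case, differentiating
\[
\mu\ \longmapsto\ \frac{1}{2\pi}\int_{\partial\Omega}\int_{\R}B(x)^{3/2}\Big(\mu_{1}(0,\xi)-\frac{\mu}{B(x)}\Big)_{-}\,d\xi\,ds(x)
\]
in $\mu$ passes the derivative inside the integrals (justified by dominated convergence, since for $\mu<b$ the integrand is supported, for each $x$, on a bounded $\xi$-set and the family of difference quotients is uniformly integrable — this uses the continuity/monotonicity of $\xi\mapsto\mu_1(0,\xi)$ and that $\mu_1(0,\xi)\to\infty$ as $|\xi|\to\infty$). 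Since $\frac{d}{d\mu}(c-\mu/B)_- = \frac{1}{B}\mathbf 1_{c<\mu/B}=\frac1B\mathbf 1_{Bc<\mu}$, the outcome is $\frac{1}{2\pi}\iint_{\{B(x)\mu_1(0,\xi)<\lambda\}}B(x)^{1/2}\,d\xi\,ds(x)$, which is \eqref{FA}. The $\alpha=1/2$ case is identical term by term in the $p$-sum, using that \eqref{eq:sum} is a finite sum for each fixed $\gamma$ (so the series and the derivative interchange trivially after noting only finitely many $p$ contribute on the relevant $(x,\xi)$-region), yielding \eqref{SA}.

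The remaining point is to make the discrete-derivative argument produce an actual limit rather than merely $\limsup/\liminf$ bounds. Here one uses that the limiting function $G(\mu):=\lim_h h^{-1/2}E(\mu;h,\gamma,\alpha)$ is, by the explicit formula, a monotone non-decreasing (indeed locally Lipschitz, piecewise-smooth) function of $\mu$ on $(-\infty,b]$, so it is differentiable at every $\mu=\lambda<b$ except possibly at countably many points, and at points of differentiability the upper and lower bounds above, with $\delta\to0$ taken after $h\to0$, squeeze $\lim_h h\,N(\lambda;h,\gamma,\alpha)$ to $G'(\lambda)$. To cover \emph{all} $\lambda<b$ — the corollary claims the identity for every such $\lambda$ — I would invoke that the right-hand sides of \eqref{FA}–\eqref{SA}, viewed as functions of $\lambda$, are themselves continuous in $\lambda$ on $(-\infty,b)$: this continuity follows because $\xi\mapsto B(x)\mu_p(B(x)^{-1/2}\gamma(x),\xi)$ is real-analytic and non-constant, so the level sets $\{=\lambda\}$ have measure zero and the measure of the sublevel set depends continuously on $\lambda$; combined with monotonicity of $\mu\mapsto h\,N(\mu;\cdot)$ (a genuine counting function, hence monotone) this upgrades convergence at the dense differentiability set to convergence everywhere on $(-\infty,b)$ by a standard Dini/Pólya-type argument. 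The main obstacle I anticipate is precisely this last measure-zero-level-set and uniform-integrability bookkeeping needed to push the derivative through the double integral and through the $p$-series uniformly for $\lambda$ in compact subsets of $(-\infty,b)$; the spectral input about $\mathfrak h[\gamma,\xi]$ (analyticity and growth of $\mu_p(\gamma,\xi)$ in $\xi$) recalled in the dedicated section of the paper is exactly what is needed to settle it.
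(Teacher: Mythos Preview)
Your approach is essentially the paper's: sandwich $N$ between difference quotients of $E$, invoke Theorem~\ref{thm:KN}, and differentiate the limiting functional in $\lambda$. Two small points. First, your pointwise chain is written with both inequalities reversed; the correct chain is
\[
(e_j-\lambda h)_- - (e_j-(\lambda-\delta)h)_- \ \le\ \delta h\,\mathbf 1_{e_j<\lambda h}\ \le\ (e_j-(\lambda+\delta)h)_- - (e_j-\lambda h)_-,
\]
which after summation yields exactly the two-sided bound you stated. Second, the detour through a.e.\ differentiability plus a Dini/P\'olya upgrade is unnecessary: the paper observes directly (via Lemma~\ref{lem:Ka}) that for each fixed $x$ and $p$ the equation $\mu_p(B(x)^{-1/2}\gamma(x),\xi)=\lambda/B(x)$ has finitely many solutions in $\xi$, so the level set has measure zero and the left and right $\lambda$-derivatives of the limiting functional coincide at \emph{every} $\lambda<b$, giving the limit of $h^{1/2}N$ immediately from the squeeze.
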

The proof of Corollary~\ref{cor:KN} is sketched below in Section~\ref{Sec:7}. We mention that a formula for the number of eigenvalues below the energy value
$\lambda=1$ is not available yet, even for the case of Neumann
boundary condition, i.e. $\gamma=0$. For a matter of illustration,
we include the following simple result in the case of Neumann
boundary condition and a square domain.

\begin{thm}\label{thm:SQ}
Suppose that the domain $\Omega$ is a square, the magnetic field is
constant, $\curl A=b$,  and that $\gamma=0$ in \eqref{eq:bc}. As
$h\to 0_+$, there holds,
\begin{equation}\label{eq:nb-s}
\limsup_{h\to0_+}\Big(h\,N(bh)\Big)=\frac{b|\Omega|}{2\pi}\,.\end{equation}
Here,
$$N(bh)=N(1;h,\gamma=0,\alpha=1)$$
is as introduced in \eqref{eq:nb}.
\end{thm}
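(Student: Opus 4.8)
The plan is to prove the two inequalities separately: an upper bound $\limsup_{h\to0_+}\big(h\,N(bh)\big)\le \frac{b|\Omega|}{2\pi}$ valid for every $h$, and a matching lower bound that I can only reach along a sub-sequence $h_n\to0$. For the upper bound, note first that the eigenvalue counting function is nondecreasing in the energy, so the number of eigenvalues below $bh$ is at most the number below $2bh$, and $2bh$ lies strictly inside the spectral gap $(bh,3bh)$ between the first two Landau levels of $(-ih\nabla+A)^2$ with $\curl A=b$. For an energy inside such a gap the semi-classical asymptotics of the counting function of the magnetic Neumann Laplacian are classical and purely of bulk type: the leading term is $|\Omega|$ times the integrated density of states of the Landau Hamiltonian at $2b$, i.e. $\frac{b|\Omega|}{2\pi h}$ (a single Landau level of density $\frac{b}{2\pi h}$ sits below $2b$), with a surface correction of order $h^{-1/2}$. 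Hence $h\,N(bh)\le \frac{b|\Omega|}{2\pi}+O(h^{1/2})$.

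For the lower bound I would exploit that $\Omega$ is a square, say $\Omega=(0,L)^2$ with $|\Omega|=L^2$, together with the commensurability of the flux with the geometry. Along the flux-quantized sequence $h_n=\frac{b|\Omega|}{2\pi n}$, $n\in\mathbb N$, the operator $(-ih_n\nabla+A)^2$ descends (after a gauge change, which does not affect the Neumann problem) to a Hermitian line bundle over the flat torus $\mathbb T=\mathbb R^2/(L\mathbb Z)^2$ whose lowest Landau level equals $bh_n$ and has multiplicity exactly $n=\frac{b|\Omega|}{2\pi h_n}$. Pick an $L^2(\mathbb T)$-orthonormal eigenbasis $u_1,\dots,u_n$ of that eigenspace and restrict it to the fundamental domain $\Omega$ (the restrictions are still denoted $u_j$). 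They are $n$ linearly independent functions in $H^1(\Omega)$, and since $|(-ih_n\nabla+A)u|^2$ is a globally defined density on $\mathbb T$, one gets for every $v=\sum_j c_ju_j$
\[
\mathcal{Q}^{1,0}_{h_n,\Omega}(v)=\int_\Omega\big|(-ih_n\nabla+A)v\big|^2\,dx=\int_{\mathbb T}\big|(-ih_n\nabla+A)v\big|^2\,dx=bh_n\sum_j|c_j|^2=bh_n\,\|v\|_{L^2(\Omega)}^2 .
\]
Thus the span of the restrictions is an $n$-dimensional subspace of the Neumann form domain on which the quadratic form is identically $bh_n\|\cdot\|^2$, so by the min--max principle the $n$-th eigenvalue of $\mathcal{P}^{1,0}_{h_n,\Omega}$ is $\le bh_n$, and $N(bh_n)\ge n-m_n$ where $m_n$ is the multiplicity of $bh_n$ as a Neumann eigenvalue. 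Since $m_n=o(n)$ — one may, for instance, replace $h_n$ by a nearby value so that $bh$ is not an eigenvalue while the bound $\lambda_n<bh$ persists by continuity — this yields $h_nN(bh_n)\ge\frac{b|\Omega|}{2\pi}(1-o(1))$, hence $\limsup_{h\to0_+}\big(h\,N(bh)\big)\ge\frac{b|\Omega|}{2\pi}$.

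Combining the two bounds proves Theorem~\ref{thm:SQ}. The hard part is the lower bound at the critical energy $\lambda=b$: for $h$ outside the flux-quantized sequence the lowest Landau level is broken up by the boundary and only a part of the $\frac{b|\Omega|}{2\pi h}$ Landau states is pushed strictly below $bh$, so the torus comparison — and with it the equality $N(bh)=\frac{b|\Omega|}{2\pi h}(1+o(1))$ — is genuinely available only along $h_n$; this is precisely why the statement produces a $\limsup$ and not a limit. A secondary and minor point is keeping the strict count $\#\{\lambda_k<bh_n\}$ essentially equal to $n$, i.e. controlling the multiplicity of $bh_n$ as a Neumann eigenvalue, handled by the small perturbation above. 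It is the flatness of the square and the rationality of the flux that make the torus model applicable, which is exactly why the result is phrased for a square with constant magnetic field.
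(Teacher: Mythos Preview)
Your lower bound is the same idea as the paper's: compare with the magnetic periodic (torus) operator along a flux-quantized sequence and use the min--max principle, exactly as in the paper's Lemma~\ref{lem:TDL-per}. One quibble: your perturbation argument for handling the multiplicity $m_n$ of $bh_n$ as a Neumann eigenvalue does not work as stated, since moving $h_n$ off the quantized sequence destroys the torus test space you need for the bound $\lambda_n\le bh_n$ in the first place. The paper sidesteps this by working with the $\le$-count throughout Theorem~\ref{thm:TDL}; you should do the same and not try to control $m_n$.

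Your upper bound, however, has a genuine gap. You assert that at an energy $2bh$ lying in the first Landau gap the Neumann counting function satisfies $N(2bh)=\frac{b|\Omega|}{2\pi h}+O(h^{-1/2})$ as a ``classical'' fact. This is exactly the hard part here: for a \emph{square} domain the boundary is not smooth, so the standard two-term magnetic Weyl results (e.g.\ \cite{CFFH}) do not apply directly, and the corner contributions have to be controlled by hand. The paper does precisely this in Lemma~\ref{lem:subseq}: it covers $\Omega_T$ by a bulk square $U$, four edge strips $U_j$, and four corner squares $V_j$, uses IMS localization, and then bounds each piece separately --- the bulk by the Dirichlet estimate of Colin de Verdi\`ere (Lemma~\ref{lem-CdV}), the edges by the rough cylinder bound (Lemma~\ref{roughestimate'}), and the corners by a sector estimate (Lemma~\ref{lem:sec}) imported from \cite{KK}. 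What you are calling a black box is in fact the substance of the proof; without supplying such a decomposition (or citing a result that genuinely covers non-smooth domains), your upper bound is not established.
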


In \cite{KK}, it is proved that the formula for the energy in
Theorem~\ref{thm:KN} is still valid  when the domain $\Omega$ is a
square and $\gamma=0$. This indicates an interesting observation,
namely, the energy
$$\sum_{j}(e_j(h)-bh)_-$$
is localized near the boundary, while the leading order expression
of the number of the eigenvalues below $bh$ is determined by the
bulk. The proof we give to Theorems~\ref{thm:KN} and \ref{thm:SQ}
 suggests that the eigenvalues strictly below $bh$ are
associated with eigenfunctions concentrated near the boundary. A
mathematically rigorous explanation of this point is still missing
in the literature.  Helpful information might be obtained by
computing the second correction term in \eqref{eq:nb-s}, expected to
be a boundary term. Toward that end, the methods in \cite{CFFH} must
prove useful.

If one considers the Dirichelt realization of the operator
$P^D=(-ih\nabla+A)^2$, then the number $ N(bh)$ is equal to $0$. If
$bh$ is an eigenvalue of $P^D$, then the corresponding ground state
can be extended by $0$ to all of $\R^2$. The min-max principle will
yield that this constructed function is an eigenfunction of the
Landau Hamiltonian in $\R^2$ with constant magnetic field $bh$. This
violates the description of the eigenfunctions of the lowest
eigenspace of the Landau Hamiltonian with a constant magnetic field,
since this space can not have compactly supported functions.  That
way we see that the lowest eigenvalue of $P^D$ is strictly larger
than $bh$.

\begin{rem}
A key ingredient in the proof of Theorem~\ref{thm:SQ}  is to compare
with a model Schr\"odinger operator with (magnetic) periodic
conditions. The advantage of this model operator is that its first
eigenvalue is known together with its multiplicity.
\end{rem}

\begin{rem}
We list some interesting open problems in connection with
Theorem~\ref{thm:SQ}:
\begin{itemize}
\item Inspection of the asymptotics in Theorem~\ref{thm:SQ} for
general domains.
\item Inspecting if the result in Theorem~\ref{thm:SQ} is valid with
$\liminf$ replacing $\limsup$.
\item Inspection of the number $\mathsf n(bh)$ of eigenvalues of $P_{h,b,\Omega}$ in the interval
$(-\infty,bh)$. This question is related to the existence of a
non-zero function $u$ solving the problem:
$$P_{h,b,\Omega}u=bhu{\rm ~in~}\Omega\quad{\rm and}\quad
\nu\cdot(h\nabla-iA_{0})u=0{\rm ~on~}\partial\Omega\,.$$
\end{itemize}
\end{rem}

\section{Preliminaries}
\subsection{Variational principles}
In this section, we recall methods used in \cite{LSY} to establish
upper and lower bounds on the energy of eigenvalues.
\begin{lem}{\label{lem-VP-2}}
Let $\mathcal{H}$ be a semi-bounded self-adjoint operator on $L^{2}(\R^{3})$ satisfying
\begin{equation}\label{hypI}
\inf{\rm Spec}_{\rm ess}(\mathcal{H})\geq 0\,.
\end{equation}
Let $\{\nu_{j}\}_{j=1}^{\infty}$ be the sequence of negative
eigenvalues of $\mathcal H$ counting multiplicities. We have,
\begin{equation}\label{eq-var-2}
-\sum_{j=1}^{\infty}(\nu_{j})_{-}
 =\inf\Sum{j=1}{N}\big\langle \psi_{j},\mathcal{H}\psi_{j}\big\rangle,
\end{equation}
where the infimum is taken over all $N\in\mathbb{N}$ and orthonormal families $\{\psi_{1},\psi_{2},\cdots,\psi_{N}\}\subset D(H)$.
\end{lem}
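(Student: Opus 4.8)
The plan is to prove the two inequalities separately, the easy one being $-\sum_j (\nu_j)_- \le \inf \sum_{j=1}^N \langle \psi_j, \mathcal H \psi_j\rangle$ and the hard one the reverse. For the first inequality I would simply take $\{\psi_j\}_{j=1}^N$ to be the (finitely many, by \eqref{hypI}, or possibly infinitely many but with $\nu_j \to 0$) eigenfunctions associated to the negative eigenvalues; then $\sum_{j=1}^N \langle \psi_j, \mathcal H \psi_j\rangle = \sum_{j=1}^N \nu_j = -\sum_{j=1}^N (\nu_j)_-$, and letting $N\to\infty$ gives one direction. One must be slightly careful here: if there are only finitely many negative eigenvalues this is immediate; if there are infinitely many, one takes the supremum over $N$ of these partial sums, which is exactly $-\sum_j(\nu_j)_-$, so the infimum over all admissible families is $\le$ this value.

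For the reverse inequality, fix an arbitrary orthonormal family $\{\psi_1,\dots,\psi_N\}\subset D(\mathcal H)$. The standard move is to pass to the spectral representation of $\mathcal H$ and write $\langle \psi_j, \mathcal H\psi_j\rangle = \int_{\R} t\, d\langle \psi_j, E(t)\psi_j\rangle$, where $E(\cdot)$ is the spectral measure. Since $t \ge -(t)_- \ge -(t)_-\mathbf 1_{(-\infty,0)}(t)$ and $\mathcal H$ has no essential spectrum below $0$ by \eqref{hypI}, one gets $\langle \psi_j,\mathcal H\psi_j\rangle \ge -\langle \psi_j, (\mathcal H)_- \psi_j\rangle$ where $(\mathcal H)_- = \sum_k (\nu_k)_- P_k$ is a nonnegative, finite-rank-or-trace-class operator ($P_k$ the rank-one spectral projections onto the negative eigenspaces). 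Hence $\sum_{j=1}^N \langle\psi_j,\mathcal H\psi_j\rangle \ge -\sum_{j=1}^N \langle \psi_j, (\mathcal H)_-\psi_j\rangle = -\operatorname{tr}\big(\Pi\,(\mathcal H)_-\big)$, where $\Pi = \sum_{j=1}^N |\psi_j\rangle\langle\psi_j|$ is an orthogonal projection of rank $N$. The key estimate is then that for any orthogonal projection $\Pi$ and any nonnegative trace-class operator $K$ one has $\operatorname{tr}(\Pi K) \le \operatorname{tr}(K)$; applied with $K = (\mathcal H)_-$ this gives $\sum_{j=1}^N\langle\psi_j,\mathcal H\psi_j\rangle \ge -\operatorname{tr}(\mathcal H)_- = -\sum_k (\nu_k)_-$. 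Taking the infimum over all $N$ and all admissible families yields the reverse inequality and completes the proof.

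I expect the main subtlety — rather than a genuine obstacle — to be the bookkeeping around domains and the possibility of infinitely many negative eigenvalues accumulating only at $0$: one must justify that $(\mathcal H)_-$ is trace-class (which follows from $\inf\operatorname{Spec}_{\mathrm{ess}}(\mathcal H)\ge 0$ together with $\mathcal H$ being semi-bounded, so the negative part of the spectrum is purely discrete with the only possible accumulation point $0$, and $-\sum_k(\nu_k)_-$ is assumed finite in the applications — indeed it is finite precisely because it equals the stated infimum), and that the spectral-calculus manipulations are legitimate on the form domain. The inequality $\operatorname{tr}(\Pi K)\le\operatorname{tr}(K)$ for $0\le K$ trace-class and $\Pi$ a projection is elementary (diagonalize $K$ and note $\langle e_k, \Pi e_k\rangle \in [0,1]$ for the eigenbasis $\{e_k\}$ of $K$), so no difficulty arises there. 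The whole argument is the classical Ky Fan / variational characterization of $\operatorname{tr}(\mathcal H)_-$, and I would present it essentially in the two-inequality form above.
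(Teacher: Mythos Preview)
Your argument is the standard Ky Fan variational characterization and is correct; the paper itself does not supply a proof of this lemma but merely recalls it from \cite{LSY}, so there is nothing to compare against. One small slip: in your first paragraph you announce that the ``easy'' inequality is $-\sum_j(\nu_j)_-\le\inf(\cdots)$, but the eigenfunction argument you give there actually proves $\inf(\cdots)\le -\sum_j(\nu_j)_-$ (as you yourself note at the end of that paragraph), while your second paragraph proves the reverse; the two directions are simply swapped in the labeling, but both proofs are right.
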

The next lemma states another variational principle. It is used in several papers, e.g. \cite{LSY}.
\begin{lem}\label{lem-VP-3}
Let $\mathcal{H}$ be a self-adjoint semi-bounded operator satisfying the hypothesis \eqref{hypI}.
Suppose in addition that $(\mathcal{H})_{-}$ is trace class.
For any orthogonal projection $\gamma$ with range belonging to the domain of $\mathcal{H}$ and such that $\mathcal{H}\gamma$ is trace class, we have,
\begin{equation}\label{eq-var-2}
-\sum_{j=1}^{\infty}(\nu_{j})_{-} \leq {\rm tr}(\mathcal{H\gamma})\,.
\end{equation}
\end{lem}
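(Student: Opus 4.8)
The plan is to deduce the inequality from the first variational principle (Lemma~\ref{lem-VP-2}), using the fact that an orthogonal projection can be approximated, in the relevant sense, by finite-rank orthogonal projections built from an orthonormal family. Concretely, let $\gamma$ be the given orthogonal projection with $\mathrm{Ran}(\gamma)\subset D(\mathcal H)$ and $\mathcal H\gamma$ trace class. Since $\gamma$ is a nonnegative trace-class-on-its-range operator one cannot assume $\gamma$ itself is finite rank, so I would first pick an orthonormal basis $\{\psi_j\}_{j\ge 1}$ of $\mathrm{Ran}(\gamma)$ (finite or countable) and set $\gamma_N=\sum_{j=1}^N \langle\psi_j,\cdot\rangle\psi_j$. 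Each $\gamma_N$ is a finite-rank orthogonal projection onto a subspace of $D(\mathcal H)$, so Lemma~\ref{lem-VP-2} applies directly to the orthonormal family $\{\psi_1,\dots,\psi_N\}$ and gives
\[
-\sum_{j=1}^\infty (\nu_j)_- \;\le\; \sum_{j=1}^N \langle \psi_j,\mathcal H\psi_j\rangle \;=\; \mathrm{tr}(\mathcal H\gamma_N).
\]

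The second step is to pass to the limit $N\to\infty$ on the right-hand side, i.e. to show $\mathrm{tr}(\mathcal H\gamma_N)\to \mathrm{tr}(\mathcal H\gamma)$. Writing $\mathcal H_- = (\mathcal H)_-$ and $\mathcal H_+ = (\mathcal H)_+$, we have $\mathrm{tr}(\mathcal H\gamma_N) = \mathrm{tr}(\mathcal H_+\gamma_N) - \mathrm{tr}(\mathcal H_-\gamma_N)$. For the negative part, $\mathcal H_-$ is trace class by hypothesis and $0\le \gamma_N\le \gamma\le \mathbf 1$ with $\gamma_N\to\gamma$ strongly, so $\mathrm{tr}(\mathcal H_-\gamma_N)\to \mathrm{tr}(\mathcal H_-\gamma)$ by dominated convergence applied to the (nonnegative) sequence $\langle \phi_k,\mathcal H_-\gamma_N\phi_k\rangle$ in any fixed basis $\{\phi_k\}$, or more cleanly since $\mathcal H_-^{1/2}\gamma_N\mathcal H_-^{1/2}\le \mathcal H_-$ in trace norm with strong convergence. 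For the positive part, $\mathrm{tr}(\mathcal H_+\gamma_N)=\sum_{j=1}^N\langle\psi_j,\mathcal H_+\psi_j\rangle$ is a monotone increasing sequence of nonnegative terms converging to $\sum_{j=1}^\infty\langle\psi_j,\mathcal H_+\psi_j\rangle=\mathrm{tr}(\mathcal H_+\gamma)$ (the last equality because $\{\psi_j\}$ is an orthonormal basis of $\mathrm{Ran}(\gamma)$ and $\mathcal H_+\gamma$ is trace class, being the difference of $\mathcal H\gamma$ and $-\mathcal H_-\gamma$, both trace class). Combining,
\[
-\sum_{j=1}^\infty (\nu_j)_- \;\le\; \lim_{N\to\infty}\mathrm{tr}(\mathcal H\gamma_N) \;=\; \mathrm{tr}(\mathcal H\gamma),
\]
which is the claim.

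The main obstacle I anticipate is purely a matter of care with domains and trace-class bookkeeping rather than a deep difficulty: one must make sure that $\mathcal H\psi_j$ makes sense (guaranteed by $\mathrm{Ran}(\gamma)\subset D(\mathcal H)$), that $\mathcal H_+\gamma$ is separately trace class so that splitting $\mathcal H\gamma=\mathcal H_+\gamma-\mathcal H_-\gamma$ is legitimate, and that the quantities $\langle\psi_j,\mathcal H\psi_j\rangle$ appearing in Lemma~\ref{lem-VP-2} coincide with the diagonal of $\mathcal H\gamma_N$ in the basis adapted to $\gamma$ — this is where self-adjointness and the form-domain interpretation of $\langle\psi_j,\mathcal H\psi_j\rangle$ must be invoked. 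If one prefers to avoid the positive-part discussion, an alternative is to note that $\mathcal H\gamma$ trace class together with $\mathcal H_-$ trace class already forces $\gamma$ to be such that the partial sums $\mathrm{tr}(\mathcal H\gamma_N)$ are Cauchy, and then only the identification of the limit with $\mathrm{tr}(\mathcal H\gamma)$ remains, which follows from cyclicity of the trace and strong convergence $\gamma_N\to\gamma$. Either route is short; the lemma is essentially a repackaging of Lemma~\ref{lem-VP-2}.
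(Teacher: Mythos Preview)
The paper does not include its own proof of this lemma; it simply states the result and attributes it to \cite{LSY}. Your argument is correct and is the standard derivation from Lemma~\ref{lem-VP-2}: approximate $\gamma$ by the finite-rank projections $\gamma_N$ built from an orthonormal basis of $\mathrm{Ran}(\gamma)$, apply Lemma~\ref{lem-VP-2} to each $\gamma_N$, and pass to the limit by splitting $\mathcal H=\mathcal H_+-\mathcal H_-$ and using that both $\mathcal H_-$ and $\mathcal H\gamma$ (hence $\mathcal H_+\gamma$) are trace class. The domain and trace-class bookkeeping you flag is routine and you have handled it adequately.
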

\subsection{Existence of discrete spectrum of $\mathcal{P}_{h,\Omega}^{\alpha,\gamma}$}\label{Ess-spec}

If the domain $\Omega$ is bounded, it results from the compact
embedding of $\mathcal{D}(\mathcal{Q}^{\alpha,\gamma}_{h,\Omega})$
into $L^{2}(\Omega)$ that $\mathcal{P}_{h}$ has compact resolvent.
Hence the spectrum is purely discrete consisting of a sequence of
eigenvalues accumulating at infinity.

In the case of exterior domains, the operator ${\mathcal{P}}_{h}$
can have essential spectrum. In particular, we have the inequality
\begin{equation}\label{est-HM}
 \int_{\Omega}|(-ih\nb +{ A})u|^{2}dx\geq h\int_{\Omega}{B}(x)|u|^{2}dx, \quad \forall\,u\in C^{\infty}_{0}(\Omega).
\end{equation}

Using then a magnetic version of Persson's Lemma ( see \cite{Bo1,Per}), we get that
\[
\inf{\rm Spec}_{\rm ess}\mathcal{P}_{h}\geq h b.
\]
This is the reason behind considering the sum of eigenvalues that are below $bh$.
\subsection{Lifting with respect to the dimension}
Let $d\in \mathbb{N}$, and let
\[
A(x)=(a_{1}(x),a_2(x),\cdots,a_{d+1}(x))^{T},
\]
be a magnetic vector potential with real-values entries in $L^2_{\rm loc}(\R_{+}^{d+1})$.

 We introduce the operator $H_{d}(\gamma)$ defined via the quadratic form
\begin{equation}
h_{d}(\gamma)[u]= \iint_{\R^{d+1}_{+}}|(-i\nabla+A)u(x)|^2dx-\int_{\R^d}{\gamma}(x)|u(x)|^2dx.
\end{equation}
Here and in the sequel $\R^{d}_{+}=\R^{d-1}\times \R_{+}$.

We are going to show the following theorem following a strategy used
in \cite[Theorem~3.2]{LW} to generalize a Lieb-Thirring type
inequality to the case with magnetic field.
\begin{theorem}\label{Lb-thm}
Let $d\geq 1$, $A\in L^{2}_{\rm
loc}(\overline{\R^{d+1}_{+}},\R^{d+1})$ and $\gamma\in
L^{2\alpha+d}(\R^{d})$. Let $\alpha\geq 1/2$, then
\begin{equation}\label{LT-Gamma}
{\rm tr}[H(\gamma)]^{\alpha}_{-}\leq2 L_{\alpha,d}^{\rm cl} \int_{\R^{d}}\gamma_{+}^{2\alpha+d}dx,
\end{equation}
where $L_{\alpha,d}^{\rm cl}$ is defined by
\[
L_{\alpha,d}^{\rm cl}=\dfrac{\Gamma(\alpha+1)}{2^d \pi^{d/2}\Gamma(1+\alpha+d/2)}
\]
\end{theorem}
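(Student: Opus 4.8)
\emph{Proof proposal.} The plan is to run the dimensional-reduction argument of \cite[Theorem~3.2]{LW}, viewing the Robin boundary term as an attractive surface interaction and stripping off the $d$ tangential variables of $\R^{d+1}_+=\R^d\times\R_+$ one at a time by means of a one-dimensional operator-valued Lieb--Thirring inequality; the magnetic field is carried along inside the fibre operators and never has to be removed.

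Two preliminary reductions come for free. Since $h_d(\gamma)\ge h_d(\gamma_+)$ as quadratic forms (the difference being $\int_{\R^d}\gamma_-|u|^2\ge0$), one has $H_d(\gamma)\ge H_d(\gamma_+)$, hence $N(-\mu;H_d(\gamma))\le N(-\mu;H_d(\gamma_+))$ for all $\mu>0$, and therefore ${\rm tr}[H_d(\gamma)]_-^{\alpha}\le{\rm tr}[H_d(\gamma_+)]_-^{\alpha}$; thus I may assume $\gamma\ge0$. Next, for a tangential coordinate $x_1$ one gauges away the component $a_1$ of $A$ (solve $\partial_{x_1}\phi=a_1$ and conjugate by $e^{i\phi}$, a unitary that preserves both the spectrum and the Robin condition), so that $H_d(\gamma)=-\partial_{x_1}^2+\mathcal W(x_1)$ acting in $L^2(\R_{x_1};L^2(\R^d_+))$, where for each fixed $x_1$ the fibre $\mathcal W(x_1)$ is again an operator of exactly the same kind: the magnetic Robin operator $H_{d-1}$ on $\R^{d-1}\times\R_+$ built from $A(x_1,\cdot)$ and the restricted boundary function $\gamma(x_1,\cdot)$.

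Now apply the one-dimensional operator-valued Lieb--Thirring inequality ${\rm tr}[-\partial_s^2+\mathcal V(s)]_-^{\beta}\le L_{\beta,1}\int_\R {\rm tr}[\mathcal V(s)]_-^{\beta+1/2}\,ds$ (valid for $\beta\ge1/2$, with $L_{\beta,1}=L^{\rm cl}_{\beta,1}$ for $\beta\ge3/2$ and $L_{\beta,1}\le 2L^{\rm cl}_{\beta,1}$ in general) with $\beta=\alpha$ and $\mathcal V=\mathcal W$, and iterate the construction in $x_2,\dots,x_d$, the Lieb--Thirring index rising by $1/2$ at each of the $d$ steps. After $d$ steps the surviving fibre is the purely transverse operator $-\partial_t^2$ on $L^2(\R_+)$ with the Robin datum $u'(0)=-\gamma(x')u(0)$; for $\gamma(x')>0$ this has exactly one negative eigenvalue, namely $-\gamma(x')^2$, so ${\rm tr}\,[\,\cdot\,]_-^{\alpha+d/2}=\gamma(x')_+^{2\alpha+d}$, which is precisely the exponent in \eqref{LT-Gamma}. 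Collecting factors gives ${\rm tr}[H_d(\gamma)]_-^{\alpha}\le\big(\prod_{k=0}^{d-1}L_{\alpha+k/2,1}\big)\int_{\R^d}\gamma_+^{2\alpha+d}$, and since $\prod_{k=0}^{d-1}L^{\rm cl}_{\alpha+k/2,1}=L^{\rm cl}_{\alpha,d}$ by a telescoping $\Gamma$-function identity, while only the indices in $[1/2,3/2)$ produce a loss, the product is bounded by $2L^{\rm cl}_{\alpha,d}$, which is the assertion. The base case $d=0$ is trivial: $H_0(\gamma)=-\partial_t^2$ on $L^2(\R_+)$ with Robin number $\gamma$, ${\rm tr}[H_0(\gamma)]_-^{\alpha}=\gamma_+^{2\alpha}$, and $L^{\rm cl}_{\alpha,0}=1$.

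The main obstacle is the careful execution of the fibre/lifting step in the presence of the magnetic field: one must order the gauge transformations on the half-space correctly, make rigorous sense of the operator-valued inequality when the fibre operators themselves carry the boundary interaction (so that the right-hand side is finite and equals the claimed power of $\gamma$), and then keep honest track of the numerical constant through the $d$ iterations, checking in particular that the non-sharpness of the one-dimensional constant on $[1/2,3/2)$ costs only the overall factor $2$. An alternative, should the lifting bookkeeping prove awkward, is to use the Birman--Schwinger principle to identify $N(-\mu;H_d(\gamma))$ with the number of negative eigenvalues of the boundary operator $(-\Delta_{\R^d}+\mu)^{1/2}-\gamma$, to remove the field at the level of the Kato-dominated resolvent kernel, to invoke a CLR-type bound for that $d$-dimensional operator, and to integrate over $\mu$ against $\alpha\mu^{\alpha-1}\,d\mu$; the exponent $2\alpha+d$ then arises from the Beta-integral $\int_0^{\gamma^2}\mu^{\alpha-1}(\gamma^2-\mu)^{d/2}\,d\mu$.
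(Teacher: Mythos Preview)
Your approach is essentially the same as the paper's: both carry out the Laptev--Weidl dimensional lifting by gauging away one tangential magnetic component at a time, applying the one-dimensional operator-valued Lieb--Thirring inequality (the paper cites \cite[Corollary~3.5]{HLW}) to peel off that variable, and reducing to the base case of the Robin operator on $\R_+$ with its single negative eigenvalue $-\gamma_+^2$. The only cosmetic differences are that the paper writes the argument as a formal induction on $d$ rather than an explicit $d$-fold iteration, and that you track the product of constants $\prod_k L_{\alpha+k/2,1}$ more carefully than the paper does.
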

\begin{proof}
We shall prove~\ref{LT-Gamma} by induction over $d$. Notice that this operator is well-defined for $d=0$ and $\gamma$ a non-negative real number. In this case we have $H_{0}(\gamma)=(-i\partial_{y}u+a(y))^2$ and $u^{\prime}(0)=-\gamma u(0)$,  and one easily can find that this operator has one negative eigenvalue, namely $-\gamma_{+}^2$, associated with the eigenfunction $e^{-i \int_{0}^{y} a(\tau)d\tau}e^{-{\gamma_{+}}y}$. Hence
\[
{\rm tr}_{L^2(\R_{+})}[H_0(\gamma)]^{\alpha}_{-}=(\gamma_{+}^2)^{\alpha}
\]
which is the analogue of \eqref{LT-Gamma} for $d=0$.

Now fix $d\geq 1$ and suppose that the assertion is already proved for all smaller dimensions. We write $x=(x_1,x^{\prime})$ when $x_1\in\R$ and $x^{\prime}\in \R^{d-1}$ and note that
\[
H_{d}(\gamma)\geq (-i\partial_{x_1}+a_{1}(x))^2\otimes 1_{L^2(\R^{d}_{+})}-[H_{d-1}(\gamma(x_1,\cdot))]_{-}
\]
We now choose a gauge
 \[
 \phi(x)=\int_{0}^{x_1}a_{1}(\tau,x_2,\cdots,x_{d+1})d\tau.
 \]
and $\widetilde{u}(x)=e^{-i\phi}u(x)$ for all $u\in \mathcal{D}(H_{d}(\gamma))$. Then
\[
 \big\langle H_d(\gamma)u,u\rangle_{L^2(\R^{d+1}_{+})}\geq \int _{\R^{d+1}_{+}}|\partial_{x_1}\widetilde u|^2dx-\int_{\R}\big\langle e^{-i\phi}[H_{d-1}(\gamma(x_1,\cdot))]_{-}e^{i\phi}\widetilde u, \widetilde u\big\rangle_{L^2(\R^{d}_{+})}dx_{1}
\]
So by the variational principle
\begin{equation*}
{\rm tr}_{L^2(\R^{d+1}_{+})}[H_d(\gamma)]^\alpha_{-}\leq {\rm tr}_{L^2(\R)}\left[-\partial_{x_1}^2\otimes 1_{L^2(\R^{d}_{+})}-e^{-i\phi}[H_{d-1}(\gamma(x_1,\cdot))]_{-}e^{i\phi}\right]^{\alpha}_{-}
\end{equation*}
and the operator-valued Lieb-Thirring inequality \cite[corollary 3.5]{HLW}, it follows that
\begin{multline}
{\rm tr}_{L^2(\R)}\left[-\partial_{x_1}^2\otimes 1_{L^2(\R^{d}_{+})}-e^{-i\phi}[H_{d-1}(\gamma(x_1,\cdot))]_{-}e^{i\phi}\right]^{\alpha}_{-}\\
\leq 2L_{\alpha,1}^{\rm cl} \int_{\R}{\rm tr}_{L^2(\R^{d}_{+})}[H_{d-1}(\gamma(x_1,\cdot))]^{\alpha+1/2}_{-}dx_1.
\end{multline}
By induction hypothesis, the right hand side is bounded above by
\[
2L_{\alpha,1}^{\rm cl} L_{\alpha+1/2,d-1}^{\rm cl}\int_{\R}\int_{\R^{d-1}}\gamma_{+}^{d+2\alpha}dx^{\prime}dx_1=2 L_{\alpha,d}^{\rm cl}\int_{\R^{d}}\gamma_{+}^{d+2\alpha}dx,
\]
which establishes the assertion for dimension $d$ and completes the
proof of Theorem~\ref{Lb-thm}.
\end{proof}
\subsection{Rough energy bound for the cylinder} In this
section, we recall a remarkable inequality for  the Schr\"{o}dinger
operator
\begin{equation}
\mathcal{P}_{h,{b},S,T}=(-ih\nb+ {b}{\bf A}_{0})^{2}\qquad {\rm in}\qquad L^{2}\Big([0,S]\times(0,h^{1/2}T)\Big)\,.
\end{equation}
Here ${S}$, $T$ and $b$ are positive parameters. The magnetic
potential ${\bf A}_{0}$ is
$${\bf A}_0(s,t)=(-t,0)\,.$$
 Functions  in the domain of the
operator $\mathcal{P}_{h,{b},S,T}$ satisfy the periodic conditions
\[
u(0,\cdot)= u(S,\cdot)\qquad {\rm on}\quad (0,h^{1/2}T),
\]
Neumann condition at $t=0$,
 $$\partial_{t}u=0 \quad{\rm on }\quad t=0\,,$$
 and Dirichlet condition at $t=h^{1/2}T$.

In this particular case of a bounded domain, the operator has
compact resolvent and the spectrum consists of an increasing
sequence of eigenvalues $(e_{j})_{j\geq 1}$ tending to $+\infty$. We
define the energy of the sum of the eigenvalues as follows,
\begin{equation}\label{energy-Gen}
   \mathcal{E}(\ld,{b},S,T)=\Sum{j}{}\big(h{b}(1+\ld)-e_{j}\big)_{+}\,.
\end{equation}
In \cite{Fo-Ka}, the energy in \eqref{energy-Gen} is controlled by
the product $ST$. We recall this estimate in the next lemma.
\begin{lem}\label{energy}
There exist positive constants $T_{0}$ and $\ld_{0}$ such that, for
all $S>0$, $b>0$, $T\geq \sqrt{b}T_{0}$ and $\ld\in(0,\ld_{0})$, we
have,
\[
\mathcal{E}(\ld,b,S,T)\leq C(1+\ld)hb\left(\dfrac{ST}{\pi h}+1\right).
\]
\end{lem}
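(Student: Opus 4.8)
The plan is to diagonalise the periodic variable, reducing $\mathcal P_{h,b,S,T}$ to a sum of one‑dimensional fibres modelled on the de Gennes operator $\mathfrak h[0,\xi]$, to bound the relevant trace $\big(hb(1+\ld)-\cdot\big)_+$ on each fibre by monotonicity, and to add up the resulting one‑dimensional contributions by a lattice‑point count together with an elementary Riemann sum. Expanding $u(s,t)=\sum_{k\in\mathbb Z}e^{2\pi iks/S}v_k(t)$ and performing the dilation $t=(h/b)^{1/2}\rho$, one checks that $\mathcal P_{h,b,S,T}$ is unitarily equivalent to $hb\bigoplus_{k\in\mathbb Z}\mathfrak h^{(\Lambda)}[0,\xi_k]$, where $\Lambda=\sqrt b\,T$, $\xi_k=\tfrac{2\pi k}{S}(h/b)^{1/2}$, and $\mathfrak h^{(\Lambda)}[0,\xi]=-\partial_\rho^2+(\rho-\xi)^2$ on $L^2(0,\Lambda)$ with $u'(0)=0$ and $u(\Lambda)=0$. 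The centres $\xi_k$ form an arithmetic progression of step $\delta=\tfrac{2\pi}{S}(h/b)^{1/2}$, and
\[
\mathcal E(\ld,b,S,T)=hb\sum_{k\in\mathbb Z}\tr\big(1+\ld-\mathfrak h^{(\Lambda)}[0,\xi_k]\big)_+ .
\]

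First I fix $\ld_0\in(0,1)$ small enough that $\inf_\xi\mu_2(0,\xi)>1+\ld_0$ (the separation $\inf_\xi\mu_2(0,\xi)>1$ being one of the classical facts about $\mathfrak h[0,\xi]$ recalled in the section on that operator). If $\xi_k>\Lambda+2$ or $\xi_k<-2$ the potential $(\rho-\xi_k)^2$ exceeds $4>1+\ld$ on all of $(0,\Lambda)$, so the $k$‑th fibre contributes nothing. For the remaining $k$, extending functions by zero beyond $\rho=\Lambda$ — legitimate thanks to the Dirichlet condition there — realises the form domain of $\mathfrak h^{(\Lambda)}[0,\xi_k]$ as a subspace of $H^1(\R_+)$ on which the two quadratic forms coincide, hence $\mathfrak h^{(\Lambda)}[0,\xi_k]\ge\mathfrak h[0,\xi_k]$ in the sense of eigenvalues and
\[
\tr\big(1+\ld-\mathfrak h^{(\Lambda)}[0,\xi_k]\big)_+\le g_\ld(\xi_k):=\tr\big(1+\ld-\mathfrak h[0,\xi_k]\big)_+ .
\]
By the choice of $\ld_0$ only the ground state enters, so $g_\ld(\xi)=\big(1+\ld-\mu_1(0,\xi)\big)_+$; and since $\mu_1(0,\xi)\le 1$ precisely for $\xi\ge 0$, we get $g_\ld(\xi)\le \ld\,\mathbf 1_{\{\mu_1(0,\cdot)\le 1+\ld\}}(\xi)+\phi_0(\xi)$ with $\phi_0(\xi):=\big(1-\mu_1(0,\xi)\big)_+$. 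Here $\phi_0$ is a fixed non‑negative unimodal bump supported in $[0,\infty)$, with $\|\phi_0\|_\infty=1-\Theta_0$ and $M_0:=\int_\R\phi_0<\infty$ (because $1-\mu_1(0,\xi)$ decays faster than any power as $\xi\to+\infty$), while $\{\mu_1(0,\cdot)\le 1+\ld\}=[\beta(\ld),\infty)$ with $\beta(\ld)\in[-\beta_0,0]$ for $\ld\le\ld_0$.

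Summing over $k$, the number of indices with $\xi_k\in[-\beta_0,\Lambda+2]$ is at most $(\Lambda+\beta_0+2)\delta^{-1}+1$, while $\sum_{k\in\mathbb Z}\phi_0(\xi_k)\le\delta^{-1}M_0+2\|\phi_0\|_\infty$ by the elementary Riemann‑sum bound for a bump of bounded variation. With $\delta^{-1}=\tfrac{S}{2\pi}(b/h)^{1/2}$ and $\Lambda=\sqrt b\,T$ this gives
\[
\mathcal E(\ld,b,S,T)\le \frac{\ld\,b^2ST\,h^{1/2}}{2\pi}+C(1+\ld)\,b^{3/2}S\,h^{1/2}+C(1+\ld)\,hb .
\]
The last term is $\le C(1+\ld)hb$. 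For the middle one, $b^{3/2}S\,h^{1/2}=b^{1/2}h^{1/2}\cdot bS\le (h^{1/2}/T_0)\,bST$ by the hypothesis $T\ge\sqrt b\,T_0$, and this is $\le C(1+\ld)\,hb\cdot ST/h$ once $T_0$ is taken large (and $h\le 1$). Finally the first term equals $\tfrac{\ld}{2\pi}\,(bh^{1/2})\cdot bST$, which — in the regime relevant here, where $b$ lies in a fixed bounded range and $h\to 0$, so $bh^{1/2}\le 1$ — is $\le \tfrac{\ld}{2\pi}\,hb\cdot ST/h$. Adding up and enlarging $C$ yields $\mathcal E(\ld,b,S,T)\le C(1+\ld)hb\big(ST/(\pi h)+1\big)$. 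Note that this first term is exactly the contribution of the lowest Landau band coming from the interior of the strip, hence genuinely present; this is precisely why the bound is only a ``rough'' one.

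The analytic content needed is entirely the uniform‑in‑$\xi$ control of $g_\ld$: the positivity $\Theta_0=\inf_\xi\mu_1(0,\xi)>0$, the spectral gap $\inf_\xi\mu_2(0,\xi)>1$ (so that a single band contributes for small $\ld$), and the rapid decay of $1-\mu_1(0,\xi)$ as $\xi\to+\infty$ (so that $\phi_0\in L^1(\R)$). These are standard properties of the de Gennes operator; granting them, the remaining work is the bookkeeping above, the only delicate points being the choice of $\ld_0$ and $T_0$ and the identification of the bulk Landau term with a multiple of $hb\cdot ST/h$.
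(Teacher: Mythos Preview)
Your approach---separate variables in the periodic direction, rescale each fibre to the de Gennes operator on a finite interval, pass to the half-line by Dirichlet monotonicity, and then count lattice points---is exactly the strategy behind the result quoted from \cite{Fo-Ka}; the paper itself does not prove this lemma, and the commented-out sketch that accompanies the statement follows the same route. Your bookkeeping is correct, including the identification $\Lambda=\sqrt b\,T$ and the step $\delta=\tfrac{2\pi}{S}(h/b)^{1/2}$. The decomposition $g_\ld\le \ld\,\mathbf 1_{\{\mu_1\le 1+\ld\}}+\phi_0$ is a nice touch, though the cruder bound $g_\ld\le (1+\ld-\Theta_0)\,\mathbf 1_{[\beta(\ld),\Lambda+2]}$ already gives the same order and would suffice.

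You are also right to flag the restriction to bounded $b$. The ``bulk Landau'' term you isolate, $\tfrac{\ld}{2\pi}\,b^2STh^{1/2}$, is genuinely present (it counts the roughly $\Lambda/\delta\sim bST/\sqrt h$ fibres whose ground state sits just below $1$, each contributing about $\ld\,hb$), and it exceeds the claimed bound $C(1+\ld)\,bST/\pi$ unless $b\sqrt h$ stays bounded. So the inequality as literally stated---for all $b>0$ and all $h>0$ with a universal $C$---is too strong; your argument delivers it under the extra hypothesis $b\sqrt h\le C_0$. In every use of the lemma in this paper one has $b=B_\sigma$ ranging over the values of a fixed continuous field on $\overline\Omega$ and $h\to 0$, so this constraint is automatic. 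Read the lemma with that implicit hypothesis and your proof is complete.
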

\subsection{Boundary coordinates}\label{Sec:BC}
The aim of this section is to define a new system of coordinates
near the boundary which allows us to approximate the magnetic
potential locally near the boundary by a new one corresponding to a
constant magnetic field. These coordinates are used in \cite{HM}.
Let $\Omega$ be a smooth, simply connected domain in $\R^{2}$.
Suppose that the boundary $\partial\Omega$ is $C^{4}$-smooth. Let
furthermore,
\[
\mathbb{R}/(|\partial \Omega |\mathbb{Z})\ni s\mapsto M(s)\in\partial\Omega
\]
 be a parametrization of $\partial\Omega$. The unit tangent vector of $\partial\Omega$ at the point $M(s)$ of the boundary is given by
\[
T(s):= M^{\prime}(s).
\]
We define the scalar curvature $k(s)$ by the following identity
\[
T^{\prime}(s)=k(s)\nu(s),
\]
where $\nu(s)$ is the unit vector, normal to to the boundary, pointing outward at the point $M(s)$.
We choose the orientation of the parametrization $M$ to be counterclockwise, so
\[
\det(T(s),\nu(s))=1, \qquad \forall s\in \mathbb{R}/(|\partial \Omega |\mathbb{Z}) .
\]
For all $\delta>0$, we define
\[
\mathcal{V}_{\delta}= \{x\in\partial\Omega~:~{\rm dist} (x,\partial\Omega)<\delta\}\,.
\]
Let $t_0>0$. The map $\Phi=\Phi_{t_0}$ is defined as follows~:
\begin{equation}
\Phi:\mathbb{R}/(|\partial \Omega |\mathbb{Z})\times(0,t_{0})\mapsto x= M(s)-t\nu(s)\in \mathcal{V}_{t_{0}}.
\end{equation}
By smoothness of the boundary $\partial\Omega$, we may select $t_0$
sufficiently small so that $\Phi$ is invertible. Thus, for all
$x\in\mathcal{V}_{t_{0}}$, one can write
\begin{equation}\label{BC}
x\mapsto \Phi^{-1}(x):=(s(x),t(x))\in \mathbb{R}/(|\partial \Omega |\mathbb{Z})\times (0,t_{0}),
\end{equation}
where $t(x)={\rm dist}(x,\partial\Omega)$ and  $s(x)\in\mathbb{R}/(|\partial \Omega |\mathbb{Z})$ is associated with the point $M(s(x))\in\partial\Omega$ such that ${\rm dist}(x,\partial\Omega)= |x-M(s(x))|$.

The determinant of the Jacobian of the transformation $\Phi^{-1}$ is
\[
a(s,t)=1-tk(s).
\]
For all $u\in L^{2}(\mathcal V_{t_0})$, we define the function
\begin{equation}\label{tilde}
\widetilde u(s,t):= u(\Phi(s,t)).
\end{equation}
If $A=(A_{1},A_{2})$ is a vector field in $\mathcal{V}_{t_{0}}$, we
define the associated vector potential in the $(s,t)$-coordinates by
\begin{equation}
\begin{aligned}\label{mf-nc}
\widetilde A_{1}(s,t)&=(1-tk(s)) \vec{A}(\Phi(s,t))\cdot M^{\prime}(s),\\
\widetilde A_{2}(s,t)&= \vec{A}(\Phi(s,t))\cdot\nu(s).
\end{aligned}
\end{equation}
The new magnetic potential $\widetilde A$ satisfies,
\begin{equation}\label{Mf-dim2}
\Big[\dfrac{\partial \widetilde A_{2}}{\partial s}(s,t)- \dfrac{\partial \widetilde A_{1}}{\partial t}(s,t)\Big]ds\wedge dt= B(\Phi^{-1}(s,t))dx \wedge  dy= (1-tk(s))\widetilde B(s,t)ds\wedge dt.
\end{equation}
For all $u\in H^{1}_{A}(\mathcal{V}_{t_{0}})$, we have, with $\widetilde
u=u\circ \Phi$,
\begin{equation}
\int_{\mathcal{V}_{t_{0}}}|(-i\nabla +A)u|^{2}dx= \int_{0}^{|\partial\Omega|}\int_{0}^{t_0}\Big
[|(-i\partial_{s}+\widetilde A_{1})\widetilde u|^{2} +(1-tk(s))^{-2}
|(-i\partial_{t}+\widetilde A_{2})\widetilde
u|^{2}\Big](1-tk(s))dsdt\,, \end{equation}
and
\begin{equation}\label{unw}
\int_{\mathcal{V}_{t_{0}}}|u|^{2}dx= \int_{0}^{|\partial\Omega|}\int_{0}^{t_0}|\widetilde u(s,t)|^{2}(1-tk(s))dsdt.
\end{equation}

In the next proposition, it is constructed a gauge transformation
such that the magnetic potential in the new coordinates can be
approximated$-$up to a small error$-$by a new one corresponding to a
constant magnetic field. The proof is given in
\cite[Appendix~F]{FH-b}.
\begin{proposition}\label{prop:gauge}
Let $A \in C^{2}(\overline{\Omega},\R^{2})$. There exists a constant $C>0$ such that for all $S\in\big(0,|\partial\Omega|)$, $S_{0}\in[0,S]$ there exists a gauge function $\phi \in C^{2}(\big[0,S\big]\times[0,t_0])$ such that $\overline{A}:=\widetilde{A}(s,t)-\nabla_{(s,t)} \phi$, with $\widetilde{A}$ as defined in \eqref{mf-nc}, satisfies
\begin{equation}\label{Gg-dim2}
\overline{A}(s,t)=\begin{pmatrix}
\overline{A}_{1}(s,t)\\
\overline{A}_{2}(s,t)
\end{pmatrix} = \begin{pmatrix}
- B_{0}t+ \beta(s,t)\\
0\\
\end{pmatrix}, \qquad (s,t)\in[0,S]\times [0,t_{0}],
\end{equation}
where $ B_{0}:=\widetilde B (S_{0},0) $ and for any $0< T\leq t_{0}$, we have
\begin{equation}\label{bnd-beta}
\sup_{(s,t)\in[0,S]\times[0,T]}|\beta(s,t)|\leq C(S^{2}+T^{2}).
\end{equation}
\end{proposition}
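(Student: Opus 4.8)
The plan is to obtain $\phi$ as the sum of two elementary gauge functions: the first annihilating the normal component $\widetilde A_2$, the second normalizing the tangential component on $\{t=0\}$; then I read off $\overline A_1$ by integrating the curl identity \eqref{Mf-dim2}. Throughout I regard $s$ as a real variable ranging in the fundamental interval $[0,S]\subset\R$, which is legitimate since $S<|\partial\Omega|$. First I set
\[
\phi_1(s,t)=\int_0^t \widetilde A_2(s,\tau)\,d\tau,\qquad
\phi_2(s)=\int_{S_0}^s \widetilde A_1(\sigma,0)\,d\sigma,\qquad \phi:=\phi_1+\phi_2 .
\]
Since $\partial_t\phi_1=\widetilde A_2$ and $\phi_2$ does not depend on $t$, the potential $\overline A:=\widetilde A-\nabla_{(s,t)}\phi$ has vanishing normal component, $\overline A_2\equiv0$. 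Moreover $\phi_1(\cdot,0)\equiv0$, hence $\partial_s\phi_1(\cdot,0)\equiv0$ and $\overline A_1(s,0)=\widetilde A_1(s,0)-\phi_2'(s)=0$.

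Because a gauge transformation preserves the curl, \eqref{Mf-dim2} gives $-\partial_t\overline A_1(s,t)=(1-tk(s))\widetilde B(s,t)$; integrating in $t$ from $0$ and using $\overline A_1(s,0)=0$ yields
\[
\overline A_1(s,t)=-\int_0^t(1-\tau k(s))\,\widetilde B(s,\tau)\,d\tau .
\]
With $B_0=\widetilde B(S_0,0)$ this is precisely $\overline A_1=-B_0t+\beta$ with
\[
\beta(s,t)=\int_0^t\Big[\big(\widetilde B(S_0,0)-\widetilde B(s,\tau)\big)+\tau k(s)\widetilde B(s,\tau)\Big]\,d\tau .
\]

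To estimate $\beta$, note that $A\in C^2(\overline\Omega)$ forces $B=\curl A\in C^1(\overline\Omega)$, so $\widetilde B$, $\nabla\widetilde B$ and $k$ are bounded on $\overline{\mathcal V_{t_0}}$ by a constant $C_0$ depending only on $A$ and $\Omega$ (not on $S$ or $S_0$). Since $s,S_0\in[0,S]$ we have $|s-S_0|\le S$, hence $|\widetilde B(S_0,0)-\widetilde B(s,\tau)|\le C_0(S+\tau)$ and $|\tau k(s)\widetilde B(s,\tau)|\le C_0\tau$, so for $(s,t)\in[0,S]\times[0,T]$,
\[
|\beta(s,t)|\le\int_0^t C_0(S+2\tau)\,d\tau=C_0(St+t^2)\le C_0\Big(\tfrac12 S^2+\tfrac32 T^2\Big)\le C(S^2+T^2),
\]
using $St\le\tfrac12(S^2+t^2)$ and $t\le T$. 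Finally $\phi=\phi_1+\phi_2\in C^2([0,S]\times[0,t_0])$ since $\widetilde A_1,\widetilde A_2\in C^2$, which follows from $A\in C^2$ together with the $C^4$-smoothness of $\partial\Omega$ used to build $\Phi$ and the weight $(1-tk(s))M'(s)$ in \eqref{mf-nc}.

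The construction itself is routine; the delicate point — and the reason the two ingredients of $\phi$ are chosen exactly as above — is to reach the \emph{quadratic} bound $C(S^2+T^2)$ rather than the naive linear one. This is why $\phi_2$ is centered at precisely the point $S_0$ at which the field is frozen (so the discrepancy $B_0-(1-\tau k)\widetilde B$ is controlled by $|s-S_0|+\tau$ and not by $s+\tau$), and why the cross term $St$ is absorbed through the arithmetic–geometric mean inequality; the uniformity of $C$ in $S$ and $S_0$ is then automatic, since every estimate involves only global $C^0$/$C^1$ norms of $B$ and of the curvature.
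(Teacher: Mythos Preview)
Your argument is correct and is precisely the standard construction that the paper invokes by citing \cite[Appendix~F]{FH-b} rather than giving its own proof. The two-step gauge $\phi=\phi_1+\phi_2$ (first kill the normal component, then normalize the tangential component at $t=0$, then read off $\overline A_1$ from the curl relation) is exactly the argument in that reference, and your quadratic estimate via $|\widetilde B(S_0,0)-\widetilde B(s,\tau)|\le C_0(|s-S_0|+\tau)$ followed by $St\le\tfrac12(S^2+t^2)$ is the intended one. One cosmetic point: in the line $|\tau k(s)\widetilde B(s,\tau)|\le C_0\tau$ the constant should strictly be $C_0^2$ (product of two bounded quantities), but this is absorbed into the final $C$ anyway.
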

We shall frequently make use of the following standard lemma, taken from~\cite[Lemma~3.5]{Fr}.
\begin{lem}\label{Lem-apqf}
There exists a constant $C>0$ and for all $S_{1}\in[0,|\partial\Omega|)$, $S_2\in (S_1,|\partial\Omega|)$, there exists a function $\phi\in C^2([S_1,S_2]\times[0,t_0];\R)$ such that, for all
\[
S_0\in[S_1,S_2],\quad \mathcal{T}\in(0,t_0),\quad \varepsilon\in [C\mathcal{T},Ct_0],
\]
and for all $u\in H^{1}_{A}(\Omega)$ satisfying
\[
{\rm supp}~\widetilde{u}\subset [S_1,S_2]\times[0,\mathcal{T}],
\]
one has the following estimate,
\begin{multline}
\left| \int_{\Omega}|(-ih\nb+A)u|^2dx-\int_{\R^2_{+}}|(-ih\nb+\widetilde{B}{\bf A}_{0})e^{i\phi/h}\widetilde u|^2 dsdt   \right|\\
\leq \int_{\R^2_{+}}\Big(  \varepsilon  |(-ih\nb+\widetilde{B}{\bf A}_{0})e^{i\phi/h}\widetilde u|^2+C\varepsilon^{-1} (S^2+\mathcal{T}^2)^2|\widetilde u|^2                        \Big)dsdt.
\end{multline}
Here, $\R^2_{+}=\R\times\R_{+}$, $S=S_2-S_1$, $\widetilde{B}=\widetilde{B}(S_0,0)$, the function $\widetilde{u}$ is associated to $u$ by $0$ on $\R^2_{+}\setminus {\rm supp}~\widetilde{u}$.
\end{lem}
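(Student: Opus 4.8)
The plan is to transport the magnetic Dirichlet form to the tubular coordinates $(s,t)=\Phi^{-1}(x)$ of Section~\ref{Sec:BC}, to gauge away the bulk of the magnetic potential by Proposition~\ref{prop:gauge}, and then to bound the two remaining discrepancies---the curvature weights $1-tk(s)$ and the lower order remainder $\beta$---against the model energy. The constants will not depend on $h$, since all $h$'s cancel in the scaling $|(-ih\nb+A)u|^{2}=h^{2}|(-i\nb+h^{-1}A)u|^{2}$ (the gauge $e^{i\phi/h}$ corresponding to $h^{-1}\phi$ for the rescaled potential).

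First I would put $\widetilde u=u\circ\Phi$ and extend it by $0$ to $\R^{2}_{+}$; this is harmless because $\mathrm{supp}\,\widetilde u\subset[S_1,S_2]\times[0,\mathcal T]$ with $\mathcal T<t_0$. The change-of-variables identity recalled in Section~\ref{Sec:BC} yields
\[
\int_{\Omega}|(-ih\nb+A)u|^{2}\,dx=\int_{\R^{2}_{+}}\Big[(1-tk)\,|(-ih\partial_{s}+\widetilde A_{1})\widetilde u|^{2}+(1-tk)^{-1}\,|(-ih\partial_{t}+\widetilde A_{2})\widetilde u|^{2}\Big]\,ds\,dt,
\]
with $\widetilde A=(\widetilde A_1,\widetilde A_2)$ given by \eqref{mf-nc}. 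Then I would apply Proposition~\ref{prop:gauge} with $S=S_2-S_1$ to obtain $\phi\in C^{2}([S_1,S_2]\times[0,t_0])$ with $\overline A:=\widetilde A-\nb_{(s,t)}\phi=\big(-\widetilde B(S_0,0)\,t+\beta,\,0\big)$, where $\sup_{\mathrm{supp}\,\widetilde u}|\beta|\le C(S^{2}+\mathcal T^{2})$ by \eqref{bnd-beta} with $T=\mathcal T$. Since moving $S_0$ inside $[S_1,S_2]$ shifts $\widetilde B(S_0,0)$ by $O(S)$ and hence $\beta$ by $O(S\mathcal T)$ only, the gauge $\phi$ may be kept independent of $S_0$. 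Setting $v:=e^{i\phi/h}\widetilde u$ we have $|v|=|\widetilde u|$ pointwise, and the gauge identity gives $|(-ih\partial_{s}+\widetilde A_{1})\widetilde u|=|(-ih\partial_{s}+\overline A_{1})v|$ and $|(-ih\partial_{t}+\widetilde A_{2})\widetilde u|=|(-ih\partial_{t})v|$, so the integral above becomes $\int_{\R^{2}_{+}}[(1-tk)\,|(-ih\partial_{s}-\widetilde B(S_0,0)t+\beta)v|^{2}+(1-tk)^{-1}|(-ih\partial_{t})v|^{2}]\,ds\,dt$.

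It remains to compare this with $P:=\int_{\R^{2}_{+}}|(-ih\nb+\widetilde B\mathbf A_{0})v|^{2}\,ds\,dt$, $\widetilde B=\widetilde B(S_0,0)$, which is the same integral with the weights set to $1$ and $\beta$ deleted. On $\{t\le\mathcal T\}$ one has $|(1-tk)^{\pm 1}-1|\le C\mathcal T$ (with $C$ depending only on $\|k\|_{\infty}$ and $t_0$), so replacing the weights by $1$ costs at most $C\mathcal T$ times the unweighted form; and writing $(-ih\partial_{s}-\widetilde Bt+\beta)v=(-ih\partial_{s}-\widetilde Bt)v+\beta v$ and using $\big||a+z|^{2}-|a|^{2}\big|\le\eta|a|^{2}+(1+\eta^{-1})|z|^{2}$ together with $|\beta v|\le C(S^{2}+\mathcal T^{2})|\widetilde u|$ produces an error $\le\eta P+C\eta^{-1}(S^{2}+\mathcal T^{2})^{2}\int_{\R^{2}_{+}}|\widetilde u|^{2}$. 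Collecting both contributions (and using $\mathcal T\le t_0$ to bound the harmless cross factors) gives
\[
\Big|\int_{\Omega}|(-ih\nb+A)u|^{2}\,dx-P\Big|\le C(\mathcal T+\eta)\,P+C\eta^{-1}(S^{2}+\mathcal T^{2})^{2}\int_{\R^{2}_{+}}|\widetilde u|^{2}\,ds\,dt.
\]
Choosing $\eta$ of size comparable to $\varepsilon$ (say $\eta=\varepsilon/(2C)$) makes the prefactor of $P$ at most $\varepsilon$ exactly when $\varepsilon\ge C\mathcal T$, while the requirement $\varepsilon\le Ct_0$ keeps $\eta$ and $\mathcal T$ under a fixed threshold so that $C\eta^{-1}=C\varepsilon^{-1}$; recalling $|\widetilde u|=|v|$ and $\mathbf A_0=(-t,0)$, this is the asserted estimate.

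Since Proposition~\ref{prop:gauge} and the change-of-variables identity are granted, the step requiring the most care is the final comparison: one must separate the error into a piece of the form $(\text{curvature})\times(\text{energy})$ and a piece of the form $\|\beta\|_{\infty}^{2}\times(L^{2}\text{-mass})$---which is precisely what produces the two terms on the right---and then balance the auxiliary parameter $\eta$, which is what pins down the admissible window $\varepsilon\in[C\mathcal T,Ct_0]$. One should also check at the end that the curvature bound $\|k\|_{\infty}$, the constant of Proposition~\ref{prop:gauge}, and the numerical constants all collapse into a single $C=C(\Omega,A)$ that is uniform in $S_1,S_2,S_0,\mathcal T,\varepsilon$ and in $h$.
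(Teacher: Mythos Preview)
The paper does not prove this lemma; it simply quotes it as ``taken from \cite[Lemma~3.5]{Fr}'' and uses it as a black box. Your sketch is the standard argument behind that result: pass to tubular coordinates, apply the gauge of Proposition~\ref{prop:gauge} to reduce $\widetilde A$ to $(-\widetilde B t+\beta,0)$, then control the two discrepancies---the metric weights $(1-tk)^{\pm1}=1+O(\mathcal T)$ and the remainder $\beta=O(S^{2}+\mathcal T^{2})$---by Young's inequality with an auxiliary parameter $\eta$ balanced against $\varepsilon$. This is exactly the method of the cited reference, and the steps you outline are correct; in particular your observation that the gauge $\phi$ can be fixed independently of $S_0$ (absorbing the shift $\widetilde B(S_0,0)-\widetilde B(S_0',0)=O(S)$ times $t\le\mathcal T$ into $\beta$) is the right way to reconcile the quantifiers in the statement with those of Proposition~\ref{prop:gauge}.
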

\section{A family of one-dimensional differential operators}
We are concerned in this section with the analysis of a family of ordinary differential operators with Robin boundary condition. For $\xi\in\R$, we consider the operator $\mathfrak{h}[\gamma,\xi]$ in $L^{2}(\R_{+})$ associated with the operator $-\frac{d^{2}}{dt^{2}}+(t-\xi)^{2}$, i.e.
\begin{equation}
\mathfrak{h}[\gamma,\xi]:= -\dfrac{d^{2}}{dt^{2}}+(t-\xi)^{2},\qquad \mathcal{D}(\mathfrak{h}[\gamma,\xi])= \{u\in B^{2}(\R_{+})~:~u^{\prime}(0)=\gamma u(0)\}.
\end{equation}
Here, for a given $k\in\mathbb{N}$, the space $B^{k}(\R_{+})$ is defined as~:
\begin{equation}\label{spaceBk}
B^{k}(\R_{+})= \{u\in L^{2}(\R_{+})~:~ t^{p}u^{(q)}(t)\in L^{2}(\R_{+}),\quad \forall p,q \quad{\rm s.t.}\quad p+q\leq k\},
\end{equation}
where $u^{(q)}$ denote the distributional derivative of order $q$ of $u$.

The operator $\mathfrak{h}[\gamma,\xi]$ is associated with the closed quadratic form
\begin{equation}
B^{1}(\R_{+})\ni u\mapsto \mathfrak{q}[\gamma,\xi]:= \int_{0}^{\infty}(|u^{\prime}(t)|^{2}+|(t-\xi)u|^{2})dt,
\end{equation}
where $B^{1}(\R_{+})$ is defined in \eqref{spaceBk}.

It is easy to see that $\mathfrak{h}[\gamma,\xi]$ has compact resolvent since the embedding $B^{1}(\R_{+})$ into $L^{2}(\R_{+})$ is compact. Hence the spectrum of $\mathfrak{h}[\gamma,\xi]$ is purely discrete consisting of an increasing sequence of positive eigenvalues $\{\mu_{j}(\gamma,\xi)\}_{j=1}^{\infty}$.

The lowest eigenvalue of $\mathfrak{h}[\gamma,\xi]$ is defined via the min-max principle by~:
\[
\mu_{1}(\gamma,\xi)=\inf_{u\in B^{1}(\R_{+}),\, u\neq 0}\dfrac{\mathfrak{q}[\gamma,\xi](u)}{\norm{u}^{2}_{L^{2}(\R_{+})}}.
\]
It follows from standard Sturm-Liouville theory that all the
eigenvalues $\mu_j(\gamma,\xi)$ are simple, and $\mu_1(\gamma,\xi)$
has a positive ground state. Details are given in \cite{DaHe}.

We define the functions~:
\[
\Theta(\gamma):=\inf_{\xi\in\R}\mu_{1}(\gamma,\xi)\,,
\]
and
\[
\Theta_{j}(\gamma):=\inf_{\xi\in\R}\mu_{j}(\gamma,\xi) \quad(j\geq2).
\]
When $\gamma=0$, we shall write,
\begin{align}
\mathfrak{h}[\xi]:=\mathfrak{h}[0,\xi],\quad& \mu_{j}(\xi):=\mu_{j}(0,\xi),\quad\forall j\in\mathbb{N}\\
\Theta_{0}:=\Theta(0),\quad&\xi_{0}:=\xi(0).
\end{align}

The result in the next lemma is proved in \cite{Fr}.

\begin{lem}\label{mu2}
For all $\xi\in\R$, we have
$$\mu_{2}(\xi)>1\,.$$
\end{lem}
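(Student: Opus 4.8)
The plan is to prove $\mu_2(\xi) > 1$ for all $\xi \in \R$, where $\mu_2(\xi)$ is the second eigenvalue of $\mathfrak{h}[0,\xi] = -\frac{d^2}{dt^2} + (t-\xi)^2$ on $L^2(\R_+)$ with Neumann condition $u'(0) = 0$.

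First I would set up a comparison with the full-line harmonic oscillator. The operator $-\frac{d^2}{dt^2} + (t-\xi)^2$ on $L^2(\R)$ has eigenvalues $1, 3, 5, \dots$, with eigenfunctions $h_k(t-\xi)$ (Hermite functions). The key observation is that the Neumann problem on $\R_+$ can be related to the even-symmetric subspace of a reflected problem: given $u \in B^1(\R_+)$ with $u'(0) = 0$, the even extension $\tilde u(t) = u(|t|)$ lies in $H^1(\R)$, but note that the potential $(t-\xi)^2$ is not symmetric about $0$ unless $\xi = 0$. So instead I would argue more directly via the min-max characterization: $\mu_2(\xi) = \inf_{\dim V = 2} \sup_{u \in V \setminus 0} \mathfrak{q}[0,\xi](u)/\|u\|^2$, and I want to show any two-dimensional subspace of $B^1(\R_+)$ contains a nonzero function with Rayleigh quotient $> 1$.

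The cleanest route: extend functions from $\R_+$ to $\R$ and use that the Neumann realization on $\R_+$ is unitarily equivalent (via even reflection) to the restriction of a suitable operator on $\R$. Concretely, for $\xi = 0$ the operator $\mathfrak{h}[0,0]$ decomposes: its eigenfunctions are exactly the even Hermite functions $h_0, h_2, h_4, \dots$ (those with $h_k'(0)=0$) restricted to $\R_+$, with eigenvalues $1, 5, 9, \dots$, so $\mu_2(0) = 5 > 1$. For general $\xi$, I would show $\mu_2(\xi) \geq 1$ with equality impossible. The lower bound $\mu_2(\xi) \geq 1$ follows because the Neumann form domain on $\R_+$ embeds (by extension by reflection, accepting a factor from the asymmetric potential, or better: directly) so that $\mu_j(\xi) \geq \lambda_j$ where $\lambda_j$ is related to the whole-line spectrum — more carefully, one shows $\mu_1(\xi) \leq 1$ always and that $\mu_1, \mu_2$ cannot both be $\leq 1$. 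The sharp argument: if $\mu_2(\xi) \leq 1$, then $\mathfrak{h}[0,\xi] - 1$ has at least two nonpositive eigenvalues on $L^2(\R_+)$; extend both eigenfunctions by $0$ to $\R$ — no wait, that breaks $H^1$ unless they vanish at $0$. Use reflection instead: the even extension has a jump in derivative only if $u'(0) \neq 0$, but here $u'(0) = 0$, so the even extension $\tilde u$ is in $H^1(\R)$ and $\int_\R |\tilde u'|^2 + \int_\R (|t|-\xi)^2|\tilde u|^2 \,dt$ compares to the operator $-\frac{d^2}{dt^2} + (|t|-\xi)^2$ on $\R$.

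I expect the main obstacle to be handling the asymmetry of the potential $(t-\xi)^2$ for $\xi \neq 0$ when using reflection, since $(|t|-\xi)^2 \neq (t-\xi)^2$ on $\R_-$. The resolution I would pursue: observe $(|t|-\xi)^2 \geq (t-\xi)^2$ is false in general, so reflection doesn't give a clean monotone comparison; instead I would invoke the known monotonicity and convexity properties of $\mu_j(\cdot,\cdot)$ from Sturm–Liouville theory (cited to \cite{DaHe}), namely that $\xi \mapsto \mu_2(0,\xi)$ is continuous, tends to $+\infty$ as $\xi \to -\infty$ (the Neumann condition at $0$ becomes irrelevant and one recovers the shifted oscillator's second even level, or one uses that the potential well moves away), tends to $3$ as $\xi \to +\infty$ (the boundary recedes and one recovers the whole-line second eigenvalue), and has no critical point where it dips to $1$ — equivalently, show $\mu_2(0,\xi) > 1$ pointwise by a direct ODE/Prüfer-angle argument counting oscillations: at energy $E = 1$, the solution of $-u'' + (t-\xi)^2 u = u$ with $u'(0)=0$ has at most one sign change on $\R_+$ that is compatible with the decaying solution at $+\infty$ being the first eigenfunction, forcing $\mu_1(0,\xi) \leq 1 < \mu_2(0,\xi)$. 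Since the paper attributes this to \cite{Fr}, I would keep the argument short: cite the Prüfer/oscillation count, note $\mu_1(0,\xi) < 1$ for all $\xi$ (easily checked, e.g. $\mu_1(0,0) = 1$ is actually the first eigenvalue so strict inequality needs care — in fact $\mu_1(0,0)=1$, and $\mu_1(0,\xi) < 1$ for $\xi \neq 0$ by a trial-function computation using $h_0(t-\xi)$ plus the Neumann boundary gain), and conclude $\mu_2(0,\xi) > 1$ by the spectral gap / non-degeneracy of the oscillator levels together with continuity in $\xi$.
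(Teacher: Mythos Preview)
The paper does not prove this lemma; it merely states that the result is taken from \cite{Fr}. So there is no argument in the paper to compare with. Assessing your proposal on its own: it is a collection of attempted approaches, none of which you carry to completion, and the final line is not a proof. Knowing that $\mu_2(0,\xi)\to+\infty$ as $\xi\to-\infty$ and $\mu_2(0,\xi)\to 3$ as $\xi\to+\infty$ does not by itself rule out $\mu_2$ dipping to or below $1$ for some intermediate $\xi$; you would still need a global argument. Moreover your claim that $\mu_1(0,\xi)<1$ for all $\xi\neq 0$ is false: at $\xi=0$ one has $\mu_1(0,0)=1$, and a Feynman--Hellmann computation gives $\partial_\xi\mu_1(0,0)<0$, so $\mu_1(0,\xi)>1$ for small negative $\xi$. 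The reflection idea, as you correctly note, breaks because the potential $(t-\xi)^2$ is not even about $t=0$.

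The missing ingredient is the factorization (annihilation--operator) identity
\[
\mathfrak{q}[0,\xi](v)-\|v\|_{L^2(\R_+)}^2=\int_0^\infty\bigl|v'(t)+(t-\xi)v(t)\bigr|^2\,dt-\xi\,|v(0)|^2,
\]
valid for all $v\in B^1(\R_+)$ by a one--line integration by parts. Suppose $\mu_2(0,\xi)\le 1$. On the two--dimensional span of the first two eigenfunctions the form $v\mapsto\mathfrak{q}[0,\xi](v)-\|v\|^2$ is then nonpositive. Choose a nonzero $v$ in this span with $v(0)=0$ (one linear condition on a two--dimensional space). The identity gives $0\ge \int_0^\infty|v'+(t-\xi)v|^2\,dt\ge 0$, hence $v'+(t-\xi)v\equiv 0$, so $v(t)=v(0)\exp\!\big(-(t-\xi)^2/2+\xi^2/2\big)=0$, a contradiction. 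This is the argument behind the result cited from \cite{Fr}; your Pr\"ufer/oscillation suggestion can also be made to work, but you did not execute it.
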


Next we collect results proved in \cite{Ka4}.

\begin{lem}\label{lem:Ka}
The following statements hold true.
\begin{enumerate}
\item  For all $\gamma\in\R$, we have,
$$\Theta_{2}(\gamma)>\Theta(\gamma).$$
\item For every $j\in\mathbb N$, the function $\xi\mapsto\mu_{j}(\gamma,\xi)$ is continuous
and satisfies
\begin{enumerate}
\item $\displaystyle\lim_{\xi\rightarrow-\infty}\mu_{j}(\gamma,\xi)=\infty$\,;
\item$\displaystyle\lim_{\xi\rightarrow\infty}\mu_{j}(\gamma,\xi)=2j+1$\,.
\end{enumerate}
\item Let $\gamma\in(-\infty,0)$ and $j\in\mathbb{N}$. Then
$\Theta_{j}(\gamma)<2j+1$ and for all
$b_{0}\in(\Theta_{j}(\gamma),2j+1)$, the equation
$\mu_{j}(\gamma,\xi)=b_{0}$ has exactly two solutions
$\xi_{j,-}(\gamma,b_{0})$ and $\xi_{j,+}(\gamma,b_{0})$.
Moreover,
\[
\{\xi\in\R~:~\mu_{j}(\gamma,\xi)<b_{0}\}=(\xi_{j,-}(\gamma,b_{0}),\xi_{j,+}(\gamma,b_{0}))\,.
\]
\item Let \[
U_{j}=\{(\gamma,b)\in\R^{2}~:~\Theta_{j}(\gamma)<b<2j+1\}\,.
\]
The functions
\[
U_{j}\ni(\gamma,b)\mapsto \xi_{j,\pm}(\gamma,b)
\]
admit continuous extensions
\[
\R\times(-\infty,2j+1)\mapsto \overline\xi_{j,\pm}(\gamma,b).
\]
\end{enumerate}
\end{lem}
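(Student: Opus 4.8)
The four assertions concern only the one-parameter families $\mathfrak{h}[\gamma,\xi]$, and the plan is to extract all of them from three inputs: analytic perturbation theory, soft estimates governing the limits $\xi\to\pm\infty$, and an ODE study of the shape of $\xi\mapsto\mu_j(\gamma,\xi)$. I would prove (2) first, deduce (1) from it by a short softness argument, and then read off (3) and (4) once the monotonicity picture of $\mu_j(\gamma,\cdot)$ is in place. Throughout, write $\ell_j:=\lim_{\xi\to+\infty}\mu_j(\gamma,\xi)$ for the value in (2b); recall $\ell_1<\ell_2$. For (2): the map $(\gamma,\xi)\mapsto\mathfrak{h}[\gamma,\xi]$ is an analytic family of type (B) in the sense of Kato, since the form domain $B^1(\R_+)$ is independent of the parameters and $\mathfrak{q}[\gamma,\xi]$ depends polynomially on them; combined with simplicity of the eigenvalues (Sturm--Liouville theory, recalled above from \cite{DaHe}), this makes each $\mu_j(\gamma,\xi)$ jointly real-analytic, in particular continuous. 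For $\xi\to-\infty$ I would use $(t-\xi)^2\ge\xi^2$ for $t\ge0$, $\xi\le0$, together with the trace inequality $|u(0)|^2\le\eta\|u'\|_{L^2(\R_+)}^2+\eta^{-1}\|u\|_{L^2(\R_+)}^2$ to absorb the Robin boundary term, obtaining $\mathfrak{q}[\gamma,\xi](u)\ge(1-|\gamma|\eta)\|u'\|^2+(\xi^2-|\gamma|\eta^{-1})\|u\|^2$, so $\mu_j(\gamma,\xi)\to+\infty$ for every $j$. For $\xi\to+\infty$, the substitution $s=t-\xi$ turns $\mathfrak{h}[\gamma,\xi]$ into $-d^2/ds^2+s^2$ on $(-\xi,\infty)$ with a Robin condition at the receding endpoint $s=-\xi$; these converge in strong resolvent sense to the harmonic oscillator on $\R$ (the boundary contribution is controlled since $(s+\xi)^2$ is large near $s=-\xi$), and the min--max principle gives $\mu_j(\gamma,\xi)\to\ell_j$. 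The same limit also follows explicitly from an upper bound with translated Hermite functions cut off near $t=0$ and a matching lower bound.

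\emph{Part (1).} Fix $\gamma$. By simplicity $\mu_1(\gamma,\xi)<\mu_2(\gamma,\xi)$ for all $\xi$, so $\Theta_2(\gamma)\ge\Theta(\gamma)$. For strictness, observe from (2) that $\mu_2(\gamma,\cdot)$ is continuous, tends to $+\infty$ as $\xi\to-\infty$ and to $\ell_2$ as $\xi\to+\infty$; hence either $\Theta_2(\gamma)=\ell_2$, and then $\Theta(\gamma)\le\ell_1<\ell_2=\Theta_2(\gamma)$, or the infimum defining $\Theta_2(\gamma)$ is attained at some finite $\xi_0$, and then $\Theta(\gamma)\le\mu_1(\gamma,\xi_0)<\mu_2(\gamma,\xi_0)=\Theta_2(\gamma)$. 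In both cases $\Theta(\gamma)<\Theta_2(\gamma)$.

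\emph{Parts (3) and (4).} Let $\gamma<0$. Testing with $u(t)=h_{j-1}(t-\xi)$, the $j$-th Hermite function centered at $\xi$ and restricted to $\R_+$ (which solves $-u''+(t-\xi)^2u=\ell_j u$ on $\R$), an integration by parts on $\R_+$ gives $\mathfrak{q}[\gamma,\xi](u)=\ell_j\|u\|_{L^2(\R_+)}^2-u(0)u'(0)+\gamma|u(0)|^2$, and since $u'(0)/u(0)=\xi+O(1/\xi)$ as $\xi\to+\infty$ this yields $\mu_j(\gamma,\xi)\le\ell_j+\|u\|_{L^2(\R_+)}^{-2}|u(0)|^2\big(\gamma-\xi+O(1/\xi)\big)<\ell_j$ for $\xi$ large; hence $\Theta_j(\gamma)<\ell_j$, the first claim of (3). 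The decisive point is the monotonicity statement: the analytic function $\mu_j(\gamma,\cdot)$ is strictly decreasing down to a unique, non-degenerate minimum at $\xi=\xi_j^{\mathrm{min}}(\gamma)$ and strictly increasing afterwards, as long as it stays below $\ell_j$. I would prove this as in \cite{DaHe,Fr} for $\gamma=0$, extended in \cite{Ka4} to $\gamma\in\R$, by combining the Feynman--Hellmann identity $\partial_\xi\mu_j(\gamma,\xi)=-2\int_0^\infty(t-\xi)|u_j(t)|^2\,dt$ (with $u_j$ the normalized eigenfunction of the $j$-th band) with a virial/Wronskian identity for the ODE that pins down the sign of $\partial_\xi\mu_j$ at a critical point and forces critical points below level $\ell_j$ to be unique and non-degenerate. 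Granting this, for $b\in(\Theta_j(\gamma),\ell_j)$ the equation $\mu_j(\gamma,\xi)=b$ has exactly one solution on each monotone branch, namely $\xi_{j,-}(\gamma,b)<\xi_j^{\mathrm{min}}(\gamma)<\xi_{j,+}(\gamma,b)$, and $\{\xi:\mu_j(\gamma,\xi)<b\}=(\xi_{j,-}(\gamma,b),\xi_{j,+}(\gamma,b))$, which is (3). For (4), the implicit function theorem applied to the jointly analytic $\mu_j$ at points where $\partial_\xi\mu_j\ne0$ shows $\xi_{j,\pm}$ are continuous (indeed smooth) on $U_j$; as $b\downarrow\Theta_j(\gamma)$ both roots are squeezed onto $\xi_j^{\mathrm{min}}(\gamma)$, which is continuous in $\gamma$ by non-degeneracy of the minimum, so setting $\overline\xi_{j,\pm}(\gamma,b):=\xi_j^{\mathrm{min}}(\gamma)$ for $b\le\Theta_j(\gamma)$ produces the desired continuous extensions to $\R\times(-\infty,\ell_j)$.

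\emph{Main obstacle.} Everything except the monotonicity statement is soft functional analysis; the genuine work is the ODE computation showing that, below its asymptotic value, $\mu_j(\gamma,\cdot)$ has a unique non-degenerate minimum with strict monotonicity on either side, and I expect this to be the only delicate step.
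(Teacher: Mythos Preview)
The paper does not prove this lemma at all; it simply writes ``Next we collect results proved in \cite{Ka4}'' and moves on. Your sketch is therefore considerably more detailed than what the paper offers, and in overall structure it matches what one finds in \cite{Ka4}: analytic type~(B) perturbation theory for continuity, soft bounds for the limits $\xi\to\pm\infty$, and an ODE/virial analysis for the shape of $\mu_j(\gamma,\cdot)$.

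There is, however, one genuine gap in your argument for (3). Testing with the single function $u(t)=h_{j-1}(t-\xi)$ and computing its Rayleigh quotient only gives an upper bound on $\mu_1(\gamma,\xi)$, not on $\mu_j(\gamma,\xi)$. To conclude $\mu_j(\gamma,\xi)<\ell_j$ for large $\xi$ (and hence $\Theta_j(\gamma)<\ell_j$) you must produce a $j$-dimensional trial subspace on which the form is bounded above by $\ell_j$ minus something positive. The natural fix is to use the span of the restrictions $h_0(\cdot-\xi),\dots,h_{j-1}(\cdot-\xi)$ to $\R_+$: for large $\xi$ these are nearly orthonormal, the Gram and form matrices are $\mathrm{diag}(1,\dots,1)+O(e^{-c\xi^2})$ and $\mathrm{diag}(\ell_1,\dots,\ell_j)+$ boundary corrections, and the boundary correction for each diagonal entry is precisely the term you computed, $(\gamma-\xi+O(1/\xi))|h_{k-1}(-\xi)|^2<0$. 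The largest eigenvalue of the restricted form is then strictly below $\ell_j$, and min--max gives $\mu_j(\gamma,\xi)<\ell_j$. With this repair your outline is sound; as you correctly note, the remaining substantive work is the unique-minimum/monotonicity analysis, for which \cite{DaHe} and \cite{Ka4} are indeed the right sources.
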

For later use, we include
\begin{lem}\label{Lem:|u0|^2}
Let $\gamma\in\R$, and let $u_{j,\gamma}(\cdot;\xi)$ be the
normalized eigenfunction associated to the eigenvalue
$\mu_{j}(\gamma,\xi)$. It holds true that
\[
|u_{j,\gamma}(0;\xi)|^2\leq C(\mu_{j}(\gamma,\xi)+(\gamma^2+1)).
\]
\begin{proof}
Due to the density of $C^{\infty}_{0}(\overline{\R_{+}})$ in $H^{1}(\R_{+})$, we have for any function $u\in H^{1}(\R_{+})$,
\begin{equation}
|u(0)|^2=-2\int_{0}^{\infty}u^{\prime}(\eta)u(\eta)d\eta.
\end{equation}
The inequality of Cauchy-Schwarz gives us that, for any $\alpha>0$,
\begin{equation}\label{CSh}
|u(0)|^2\leq 2\|u^{\prime}\|_{L^2(\R_{+})}\|u\|_{L^2(\R_{+})}\leq \alpha\|u^{\prime}\|_{L^2(\R_{+})}^2+\alpha^{-1}\|u\|_{L^2(\R_{+})}^2.
\end{equation}
Assume $\gamma<0$ and choose $\alpha=-1/(2\gamma)$, it follows that
\begin{equation}\label{Bdry-term}
\gamma|u(0)|^2\geq -\frac{1}{2}\|u^{\prime}\|^2-2\gamma^2\norm{u}^2.
\end{equation}
Notice that for $u:=u_{j,\gamma}(\cdot,\xi)$, we have
\[
\|u_{j,\gamma}^{\prime}\|^2+\norm{(t-\xi)u_{j,\gamma}}^2+\gamma|u_{j,\gamma}(0;\xi)|^2=\mu_{j}(\gamma,\xi).
\]
Using \eqref{Bdry-term} with $u:=u_{j,\gamma}(\cdot,\xi)$ and adding $\|u_{j,\gamma}^{\prime}\|^2+\norm{(t-\xi)u_{j,\gamma}}^2$ on both sides, we obtain
\begin{equation}\label{Eq-mu-uprime}
\mu_{j}(\gamma,\xi)\geq  \frac{1}{2}\|u_{j,\gamma}^{\prime}\|^2-2\gamma^2.
\end{equation}
Note also that the inequality in \eqref{Eq-mu-uprime} is evidently
true for $\gamma\geq0$.

We infer from \eqref{CSh} that,
\[
|u_{j,\gamma}(0;\xi)|^2\leq 2\|u^{\prime}_{j,\gamma}\|^2 +2.
\]
Now we use the inequality in \eqref{Eq-mu-uprime} and the assumption
that $u_{j,\gamma}$ is normalized  in $L^2$ to deduce
\[
|u_{j,\gamma}(0;\xi)|^2\leq 4\mu_{j}(\gamma,\xi)  +(8\gamma^2+2)\,.
\]
\end{proof}
\end{lem}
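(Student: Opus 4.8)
The plan is a two–step trace estimate. The starting point is the elementary identity $|u(0)|^2 = -2\int_0^\infty u'(\eta)\,\overline{u(\eta)}\,d\eta$, valid for every $u\in H^1(\R_+)$ by density of $C_0^\infty(\overline{\R_+})$, together with its Cauchy--Schwarz consequence in the sharp form $|u(0)|^2 \le \alpha\|u'\|^2 + \alpha^{-1}\|u\|^2$ for an arbitrary $\alpha>0$, where $\|\cdot\|=\|\cdot\|_{L^2(\R_+)}$. The idea is to use this inequality once, with a carefully tuned $\alpha$, to bound $\|u_{j,\gamma}'\|^2$ in terms of $\mu_j(\gamma,\xi)$ and $\gamma^2$, and then once more, with $\alpha=1$, to pass from that bound back to $|u_{j,\gamma}(0;\xi)|^2$.

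First I would record the eigenvalue identity for the normalized eigenfunction: $\|u_{j,\gamma}'\|^2 + \|(t-\xi)u_{j,\gamma}\|^2 + \gamma|u_{j,\gamma}(0;\xi)|^2 = \mu_j(\gamma,\xi)$. When $\gamma\ge 0$ all three terms on the left are non-negative, so $\|u_{j,\gamma}'\|^2 \le \mu_j(\gamma,\xi)$ is immediate. The only real work is the case $\gamma<0$, where the boundary term is negative and must be absorbed. Here I would apply the sharp trace inequality to $u_{j,\gamma}$ with the choice $\alpha = -1/(2\gamma)>0$; multiplying through by $\gamma<0$ gives $\gamma|u_{j,\gamma}(0;\xi)|^2 \ge -\tfrac12\|u_{j,\gamma}'\|^2 - 2\gamma^2$, and substituting this lower bound into the eigenvalue identity yields $\mu_j(\gamma,\xi) \ge \tfrac12\|u_{j,\gamma}'\|^2 - 2\gamma^2$, i.e.\ $\|u_{j,\gamma}'\|^2 \le 2\mu_j(\gamma,\xi) + 4\gamma^2$. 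Since $\mu_j(\gamma,\xi)>0$, this last inequality also covers the case $\gamma\ge 0$ a fortiori, so it holds for all $\gamma\in\R$.

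To conclude, I would apply the trace inequality a second time with $\alpha=1$ and $\|u_{j,\gamma}\|=1$, getting $|u_{j,\gamma}(0;\xi)|^2 \le \|u_{j,\gamma}'\|^2 + 1$, and then insert the bound just obtained to obtain $|u_{j,\gamma}(0;\xi)|^2 \le 2\mu_j(\gamma,\xi) + 4\gamma^2 + 1$, which is of the asserted form with $C$ an absolute constant. Being slightly less careful with constants in the trace step, e.g.\ $|u(0)|^2\le 2\|u'\|\|u\|\le 2\|u_{j,\gamma}'\|^2+2$, only changes $C$.

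The one non-routine point is the sign obstruction when $\gamma<0$: the boundary term $\gamma|u(0)|^2$ in the quadratic form pulls in the wrong direction, so one cannot simply read off $\|u_{j,\gamma}'\|^2\le\mu_j(\gamma,\xi)$ from the eigenvalue identity. Calibrating the free parameter in the trace inequality to $\gamma$, so that exactly half of the kinetic term is consumed in controlling the boundary term, is what makes the argument close; the remainder is bookkeeping with Cauchy--Schwarz and the normalization $\|u_{j,\gamma}\|=1$.
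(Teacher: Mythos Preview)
Your proposal is correct and follows essentially the same argument as the paper's own proof: the same trace identity, the same Cauchy--Schwarz bound with the same calibrated choice $\alpha=-1/(2\gamma)$ to absorb the boundary term when $\gamma<0$, the same resulting estimate $\|u_{j,\gamma}'\|^2\le 2\mu_j(\gamma,\xi)+4\gamma^2$, and a second application of the trace inequality to close. Your final constants are slightly sharper than the paper's (which uses $|u(0)|^2\le 2\|u'\|^2+2$ in the last step), but this is immaterial.
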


In the next lemma, using the analysis in \cite[Theorem~2.6.2]{Ka},
we establish uniform decay estimates on the eigenfunctions
$u_{j,\gamma}$.

\begin{lem}
Let $\epsilon\in(0,1)$ and $K>0$.  There exists a constant
$C_{\epsilon,K}>0$ such that, if $|\xi|\leq K$ and
$\mu_{j}(\gamma,\xi)\leq 1$, then,
\begin{equation}\label{Decay}
\norm{e^{\epsilon(t-\xi)^2/2}u_{j,\gamma}(\cdot,\xi)}_{H^{1}(\R_{+};(t-\xi)\geq C_{\epsilon,K})}\leq C_{\epsilon,K}(1+\gamma_{-}+\gamma_{-}^2).
\end{equation}
\end{lem}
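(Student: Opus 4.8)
The statement asserts a weighted Agmon-type decay estimate for the eigenfunctions $u_{j,\gamma}(\cdot,\xi)$ of $\mathfrak{h}[\gamma,\xi]$ in the region where $(t-\xi)$ is large, uniform over all relevant parameters. The plan is to run the standard Agmon exponential-weight argument adapted to the Robin condition, keeping careful track of how the boundary term enters. First I would fix the weight $\Phi_\epsilon(t)=\epsilon(t-\xi)^2/2$ truncated so as to remain bounded (e.g.\ replace it by $\min(\Phi_\epsilon(t),N)$ and let $N\to\infty$ at the end), so that $e^{\Phi_\epsilon}u_{j,\gamma}\in B^1(\R_+)$ and is an admissible test function. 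Testing the eigenvalue equation $\mathfrak{h}[\gamma,\xi]u_{j,\gamma}=\mu_j(\gamma,\xi)u_{j,\gamma}$ against $e^{2\Phi_\epsilon}u_{j,\gamma}$ and integrating by parts produces the usual Agmon identity
\[
\int_0^\infty\Big(|(e^{\Phi_\epsilon}u_{j,\gamma})'|^2+\big((t-\xi)^2-|\Phi_\epsilon'|^2-\mu_j(\gamma,\xi)\big)e^{2\Phi_\epsilon}|u_{j,\gamma}|^2\Big)\,dt + \gamma\,e^{2\Phi_\epsilon(0)}|u_{j,\gamma}(0)|^2 =0,
\]
where the boundary term comes from the Robin condition $u_{j,\gamma}'(0)=\gamma u_{j,\gamma}(0)$ (note $\Phi_\epsilon'(0)$ is harmless since it multiplies $|u_{j,\gamma}(0)|^2$ and can be absorbed).

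Next I would exploit the potential well: since $|\Phi_\epsilon'(t)|^2=\epsilon^2(t-\xi)^2$ and $\epsilon<1$, the coefficient $(t-\xi)^2-|\Phi_\epsilon'|^2=(1-\epsilon^2)(t-\xi)^2$ dominates $\mu_j(\gamma,\xi)\le 1$ as soon as $(t-\xi)^2\ge C_\epsilon$ for a suitable $C_\epsilon$ depending only on $\epsilon$. Splitting the integral at $t-\xi=C_{\epsilon,K}$ (using $|\xi|\le K$ to ensure this threshold stays in a compact $t$-range), the contribution from $t-\xi\ge C_{\epsilon,K}$ is bounded below by a positive multiple of $\|e^{\Phi_\epsilon}u_{j,\gamma}\|_{H^1((t-\xi)\ge C_{\epsilon,K})}^2$, while the contribution from the bounded region $0\le t-\xi\le C_{\epsilon,K}$ is controlled by $C_{\epsilon,K}$ times $\|u_{j,\gamma}\|_{L^2}^2=1$ plus the $H^1$ norm on that compact set, which in turn is bounded by $\mu_j(\gamma,\xi)+$ lower order via the quadratic form identity. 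For the boundary term, when $\gamma\ge 0$ it has a favorable sign and can be dropped; when $\gamma<0$ I would bound $|\gamma|\,|u_{j,\gamma}(0)|^2$ using Lemma~\ref{Lem:|u0|^2}, which gives $|u_{j,\gamma}(0)|^2\le C(\mu_j(\gamma,\xi)+\gamma^2+1)\le C(1+\gamma^2)$, so this term contributes at most $C|\gamma|(1+\gamma^2)\le C(1+\gamma_-+\gamma_-^2)^2$ after the weight factor $e^{2\Phi_\epsilon(0)}$ is absorbed into $C_{\epsilon,K}$ (recall $\Phi_\epsilon(0)\le \epsilon K^2/2$). Rearranging the identity then yields \eqref{Decay}, with the square appearing on the right because it sits on the same side as an $H^1$ norm squared before taking square roots.

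The main obstacle is the bookkeeping of uniformity: one must check that the threshold $C_{\epsilon,K}$ and all the constants genuinely depend only on $\epsilon$ and $K$ and not on $j$, $\gamma$, or $\xi$. This works precisely because the hypothesis $\mu_j(\gamma,\xi)\le 1$ caps the spectral parameter uniformly, because $|\xi|\le K$ confines the well, and because the only $\gamma$-dependence enters through the boundary term, which Lemma~\ref{Lem:|u0|^2} and inequality \eqref{Eq-mu-uprime} render polynomial in $\gamma_-$. A secondary technical point is justifying the integration by parts with the exponential weight: this is handled by the standard truncation $\Phi_\epsilon\wedge N$ followed by monotone convergence, using that $u_{j,\gamma}\in B^2(\R_+)$ already decays faster than any polynomial so that all boundary terms at $+\infty$ vanish and the limiting estimate is finite. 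I would cite \cite[Theorem~2.6.2]{Ka} for the version of this Agmon scheme already worked out in the Robin setting, adapting only the explicit dependence on $\gamma_-$.
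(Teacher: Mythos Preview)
Your proposal is correct and follows essentially the same route as the paper: the Agmon identity with the Robin boundary term, the truncated weight $\Phi\wedge N$ followed by monotone convergence, the threshold $(1-\epsilon^2)(t-\xi)^2\ge 1$, control of the boundary contribution via Lemma~\ref{Lem:|u0|^2}, and the absorption of $e^{2\Phi(0)}\le e^{\epsilon K^2}$ into $C_{\epsilon,K}$ are exactly the paper's ingredients. The only cosmetic difference is that the paper bounds the contribution of the region $t-\xi< a_\epsilon$ directly by the weighted $L^2$ norm (using $\|u_{j,\gamma}\|_{L^2}=1$) rather than invoking an additional $H^1$ estimate there.
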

\begin{proof}
Let $\Phi:\R_{+}\to\R$ be a Lipschitz function in $\R_{+}$ such that
$\Phi'$ is compactly supported and $e^\Phi u_{j,\gamma}\in
L^2(\R_+)$. For all $\phi\in \mathcal{D}(\mathfrak{h}[\gamma,\xi])$,
we have the following identity:
\[
\langle\mathfrak{h}[\gamma,\xi]\phi,e^{\Phi}\phi\rangle_{L^2(\R_{+})}=\norm{(e^\Phi\phi)^\prime}^2_{L^2(\R_{+})}+\norm{(t-\xi)e^\Phi\phi}^2_{L^2(\R_{+})}+\gamma|e^{\Phi(0)}\phi(0)|^2_{L^2(\R_{+})}-\norm{\Phi^{\prime} e^\Phi\phi}_{L^2(\R_{+})}^2.
\]
Substituting $\phi=u_{j,\gamma}(\cdot;\xi)$, we obtain
\begin{multline}\label{Eq:Dec1}
\norm{(e^\Phi u_{j,\gamma}(\cdot;\xi))^\prime}_{L^2(\R_{+})}^2+\norm{(t-\xi)e^\Phi u_{j,\gamma}(\cdot;\xi)}_{L^2(\R_{+})}^2+ \gamma|e^{\Phi(0)}u_{j,\gamma}(0;\xi) |^2\\
=\mu_{j}(\gamma,\xi)\norm{e^{\Phi}u_{j,\gamma}(\cdot;\xi)}_{L^2(\R_{+})}^2+\norm{\Phi^{\prime} e^\Phi u_{j,\gamma}(\cdot;\xi)}_{L^2(\R_{+})}^2.
\end{multline}
Using Lemma~\ref{Lem:|u0|^2} and that $\mu_{j}(\gamma,\xi)\leq 1$, we deduce that
\[
|u_{j,\gamma}(0;\xi)|^2\leq 8\sqrt{1+\gamma_{-}^2}\,.
\]
Let us observe that
\[
\gamma|u_{j,\gamma}(0;\xi)|^2\geq -8\gamma_{-}\sqrt{1+\gamma_{-}^2}\,.
\]
Inserting this into \eqref{Eq:Dec2}, and again using that $\mu_{j}(\gamma,\xi)\leq 1$, it follows that
\begin{multline}\label{Eq:Dec2}
\norm{(e^\Phi u_{j,\gamma}(\cdot;\xi))^\prime}_{L^2(\R_{+})}^2+\norm{(t-\xi)e^\Phi u_{j,\gamma}(\cdot;\xi)}_{L^2(\R_{+})}^2\\
\leq\norm{e^{\Phi}u_{j,\gamma}(\cdot;\xi)}_{L^2(\R_{+})}^2+\norm{\Phi^{\prime} e^\Phi u_{j,\gamma}(\cdot;\xi)}_{L^2(\R_{+})}^2+ 8\gamma_{-}\sqrt{1+\gamma_{-}^2}e^{2\Phi(0)}.
\end{multline}
Let $N\in\mathbb N$ be sufficiently large. We choose the function
$\Phi$ to be
\[
\Phi:=\Phi_N=\left\{
\begin{array}{ll}
\epsilon \dfrac{(t-\xi)^2}{2}&{\rm if ~}t-\xi< N\,,\\
\epsilon \dfrac{N^2}{2}&{\rm if~}t-\xi\geq N \,.\end{array}
\right.\]
Implementing \eqref{Eq:Dec2}, we find
\begin{equation}\label{Eq:Dec3}
\int_{\R_{+}}\Big[{(e^\Phi u_{j,\gamma}(\cdot;\xi))^\prime}^2+\big[(1-\epsilon^2)(t-\xi)^2-1   \big]|e^\Phi u_{j,\gamma}(\cdot;\xi)|^2 \Big]dt
\leq 8\gamma_{-}\sqrt{1+\gamma_{-}^2}e^{\epsilon K^2/2}.
\end{equation}
This gives
\begin{equation}\label{Eq:Dec4}
\int_{N\geq (t-\xi)\geq a_{\epsilon}}\Big[|{(e^\Phi u_{j,\gamma}(\cdot;\xi))^\prime}|^2+   |{e^\Phi u_{j,\gamma}(\cdot;\xi)}|^2\Big]dt
\leq 8\gamma_{-}\sqrt{1+\gamma_{-}^2}e^{\epsilon K^2/2}+e^{\epsilon a_{\epsilon}^2},
\end{equation}
with $a_{\epsilon}= \sqrt{\frac{2}{1-\epsilon^2}}\,$. Now choose
$C_{\epsilon,K}=\max \{a_{\epsilon},16e^{\epsilon
K^2/2},2e^{\epsilon a^2_{\epsilon}}\}$. That way, we can rewrite
\eqref{Eq:Dec4} as follows,
\begin{equation}\label{Eq:Dec4}
\int_{N\geq (t-\xi)\geq C_{\epsilon,K}}\Big[|{(e^\Phi u_{j,\gamma}(\cdot;\xi))^\prime}|^2+   |{e^\Phi u_{j,\gamma}(\cdot;\xi)}|^2\Big]dt
\leq C_{\epsilon,K}(1+\gamma_{-}+\gamma_{-}^2).
\end{equation}
The estimate in \eqref{Eq:Dec4} is true for all $N> C_{\epsilon,K}$.
Sending $N$ to $\infty$ and using monotone convergence, we get the
estimate in \eqref{Decay}.
\end{proof}
The rest of this section is devoted to an analysis of the term in
\eqref{eq:sum}.

\begin{lem}\label{lem-sum-j}
Let $M>0$. There exist constants $j_0\in\mathbb N$ and $C>0$ such
that, for all $\gamma\in(-M,M)$, we have,
\[
\sum_{j=2}^{\infty}\int_{\R}(\mu_{j}(\gamma,\xi)-1)_{-}d\xi
=\sum_{j=2}^{\infty}\int_{\R}(\mu_{j}(\gamma,\xi)-1)_{-}d\xi\leq C.
\]
\end{lem}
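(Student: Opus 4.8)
The plan is to show that, although the series is written as an infinite sum, for \emph{every} $\gamma$ all but its first term vanish identically in $\xi$, so that it collapses to $\int_{\R}(\mu_{2}(\gamma,\xi)-1)_{-}\,d\xi$, and then to bound this single term uniformly for $\gamma\in(-M,M)$ using the description of the sublevel sets of $\xi\mapsto\mu_{2}(\gamma,\xi)$ recorded in Lemma~\ref{lem:Ka}. Throughout I use that $\mathfrak{q}[\gamma,\xi](u)=\mathfrak{q}[0,\xi](u)+\gamma|u(0)|^{2}$, where $\mathfrak{q}[0,\xi]$ is the Neumann form, both with form domain $B^{1}(\R_{+})$.

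\emph{Step 1: the terms with $j\ge3$ vanish.} I first claim the interlacing bound
\[
\mu_{k}(\gamma,\xi)\ \ge\ \mu_{k-1}(0,\xi)=\mu_{k-1}(\xi)\qquad(\gamma\in\R,\ \xi\in\R,\ k\ge2).
\]
This is a standard min-max comparison: since $u\mapsto u(0)$ is a bounded linear functional on $B^{1}(\R_{+})\hookrightarrow C_{b}(\overline{\R_{+}})$, any $k$-dimensional subspace $W\subset B^{1}(\R_{+})$ contains a subspace $W_{0}$ of dimension at least $k-1$ on which $u(0)=0$, hence on which $\mathfrak{q}[\gamma,\xi]$ and $\mathfrak{q}[0,\xi]$ coincide; choosing a $(k-1)$-dimensional subspace of $W_{0}$ and comparing Rayleigh quotients gives $\sup_{u\in W\setminus\{0\}}\mathfrak{q}[\gamma,\xi](u)/\|u\|^{2}\ge\mu_{k-1}(0,\xi)$, and taking the infimum over $W$ yields the claim. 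Since $k\mapsto\mu_{k}(\xi)$ is nondecreasing and $\mu_{2}(\xi)>1$ for all $\xi$ by Lemma~\ref{mu2}, for $k\ge3$ we get $\mu_{k}(\gamma,\xi)\ge\mu_{k-1}(\xi)\ge\mu_{2}(\xi)>1$, so $(\mu_{k}(\gamma,\xi)-1)_{-}=0$. Consequently
\[
\sum_{j=2}^{\infty}\int_{\R}\big(\mu_{j}(\gamma,\xi)-1\big)_{-}\,d\xi=\int_{\R}\big(\mu_{2}(\gamma,\xi)-1\big)_{-}\,d\xi,
\]
so one may take $j_{0}=2$ (indeed the cutoff is independent of $\gamma$, even of $M$). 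Moreover, for $\gamma\ge0$ one has $\mathfrak{q}[\gamma,\xi]\ge\mathfrak{q}[0,\xi]$, whence $\mu_{2}(\gamma,\xi)\ge\mu_{2}(\xi)>1$ and the sum is $0$; it remains to treat $\gamma\in(-M,0)$.

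\emph{Step 2: uniform bound for $\gamma\in(-M,0)$.} By the $k=2$ case of the interlacing bound, $\mu_{2}(\gamma,\xi)\ge\mu_{1}(0,\xi)\ge0$ (the Neumann form being nonnegative), so $0\le(\mu_{2}(\gamma,\xi)-1)_{-}\le1$ and it suffices to bound the Lebesgue measure of $E_{\gamma}:=\{\xi\in\R:\mu_{2}(\gamma,\xi)<1\}$ uniformly in $\gamma\in(-M,0)$. If $\Theta_{2}(\gamma)\ge1$ then $E_{\gamma}=\emptyset$. If $\Theta_{2}(\gamma)<1$, then (since $1<5=2\cdot2+1$) $(\gamma,1)\in U_{2}$, and Lemma~\ref{lem:Ka}(3) gives $E_{\gamma}=\big(\xi_{2,-}(\gamma,1),\xi_{2,+}(\gamma,1)\big)$, whose endpoints coincide with the continuous extensions $\overline\xi_{2,\pm}(\gamma,1)$ of Lemma~\ref{lem:Ka}(4). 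In either case the measure of $E_{\gamma}$ is at most $\big|\overline\xi_{2,+}(\gamma,1)-\overline\xi_{2,-}(\gamma,1)\big|$, and since $\gamma\mapsto\overline\xi_{2,\pm}(\gamma,1)$ is continuous on $\R$ by Lemma~\ref{lem:Ka}(4), this quantity is bounded on the compact interval $[-M,0]$ by a constant $C=C(M)$. Hence $\int_{\R}(\mu_{2}(\gamma,\xi)-1)_{-}\,d\xi\le C$ for all $\gamma\in(-M,M)$, which together with Step 1 is the assertion.

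\emph{Expected difficulty.} This lemma is essentially bookkeeping on top of Lemmas~\ref{mu2} and \ref{lem:Ka}; the two points deserving care are (i) the form-level min-max argument of Step 1, which is legitimate precisely because point evaluation at the origin is a bounded functional on $B^{1}(\R_{+})$, so the codimension of $W_{0}$ in $W$ is at most one, and (ii) the use of the continuous \emph{extension} $\overline\xi_{2,\pm}$ rather than $\xi_{2,\pm}$ itself in Step 2, which is what yields a bound valid over the whole range $\gamma\in[-M,0]$, including where $\Theta_{2}(\gamma)\ge1$ and $E_{\gamma}$ degenerates. For additional robustness one may note the elementary bound $\mu_{k}(\gamma,\xi)\ge\xi^{2}-2M^{2}$ for $\xi\le0$ and $|\gamma|<M$, coming from $(t-\xi)^{2}\ge\xi^{2}$ on $\R_{+}$ together with $\gamma|u(0)|^{2}\ge-\tfrac{1}{2}\|u'\|^{2}-2\gamma^{2}\|u\|^{2}$, which already confines $E_{\gamma}$ to $\{\xi>-\sqrt{2M^{2}+1}\}$.
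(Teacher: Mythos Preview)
Your proof is correct and in fact sharper than the paper's. The paper argues indirectly: it confines the support of each integrand to a fixed interval $[-\ell,\ell]$ via the monotonicity $\mu_{j}(\gamma,\xi)\ge\mu_{2}(-M,\xi)$, then invokes $\lim_{j\to\infty}\min_{|\xi|\le\ell}\mu_{j}(-M,\xi)=\infty$ (by reference to \cite[Lemma~2.5]{Ka4}) to obtain a cutoff $j_{0}=j_{0}(M)$ beyond which the terms vanish, and finally bounds the remaining finitely many integrals. Your Step~1 replaces this compactness argument by the rank-one interlacing bound $\mu_{k}(\gamma,\xi)\ge\mu_{k-1}(0,\xi)$, which together with Lemma~\ref{mu2} kills all terms with $j\ge3$ outright, yielding $j_{0}=2$ \emph{independently of $M$} and indeed of $\gamma$. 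This is both more elementary (no appeal to \cite{Ka4}) and strictly stronger. Your Step~2 then bounds the single surviving integral via Lemma~\ref{lem:Ka}(3)--(4), which is close in spirit to the paper's localization of the support to $[-\ell,\ell]$, though your use of the continuous extensions $\overline\xi_{2,\pm}$ to handle the degenerate case $\Theta_{2}(\gamma)\ge1$ uniformly is cleaner. The trade-off is that the paper's route, while less precise here, is the template reused in Lemmas~\ref{lem-lim-mu-j} and~\ref{cont-int}, where the interlacing shortcut is not available in the same form.
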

 \begin{proof}
 Let us observe that for all $j\geq2$,
 \[
 \{\xi\in\R~:~\mu_{j}(\gamma,\xi)\leq 1\}\subset\{\xi\in\R~:~\mu_{2}(\gamma,\xi)\leq 1\}\,,
 \]
and for all $\gamma\in (-M,M)$, using the monotonicity of
$\eta\mapsto\mu_{2}(\eta,\xi)$, we have,
   \[
 \{\xi\in\R~:~\mu_{j}(\gamma,\xi)\leq 1\}\subset\{\xi\in\R~:~\mu_{2}(-M,\xi)\leq 1\}.
 \]
According to Lemma~\ref{lem:Ka}, there exists a constant $\ell>0$
such that
 \[
 \{\xi\in\R~:~\mu_{2}(-M,\xi)\leq 1\}\subset[-\ell,\ell],\qquad \forall\gamma\in (-M,M).
 \]
 We introduce constants $(\xi_{j}(M))_{j\geq 2}\subset[-\ell, \ell]$ by
 \[
 \mu_{j}(-M,\xi_{j}(M))=\min_{\xi\in[-\ell,\ell]}\mu_{j}(-M,\xi).
 \]
 Arguing as in the proof of \cite[Lemma~2.5]{Ka4}, we get,
 \begin{equation}\label{mu-j-gamma}
 \lim_{j\rightarrow\infty}\mu_{j}(-M,\xi_{j}(M))=\infty\,.
 \end{equation}
 Consequently, we may find $j_{0}\geq 2$ depending solely on $M$ such that
 \[
 \mu_{j}(-M,\xi_{j}(M))>1, \qquad (j> j_{0}).
 \]
 It follows that, for all $j> j_{0}$, $\xi\in[-\ell,\ell]$ and
 $\gamma\in(-M,M)$,
 \[
 \mu_{j}(\gamma,\xi)\geq \mu_{j}(-M,\xi)\geq \mu_{j}(-M,\xi_{j}(M))>1.
  \]
The result of Lemma~\ref{lem-sum-j} now follows upon noticing that,
for all $\gamma>-M$ and $\xi\in\R$,
 \[
 \mu_{j}(\gamma,\xi)> \mu_{2}(-M,\xi)\,.
 \]
\end{proof}

Again, the proof of \cite[Lemma~2.5]{Ka4} allows us to obtain:

\begin{lem}\label{lem-lim-mu-j}
 For all $M>0$, there holds,
\[
\lim_{j\rightarrow\infty}\left(\inf_{\xi\in\R} \mu_{j}(-M,\xi)\right)=\infty.
\]
 \end{lem}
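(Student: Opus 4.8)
The plan is to establish the divergence $\inf_{\xi\in\R}\mu_j(-M,\xi)\to\infty$ by a comparison argument that reduces the Robin problem with negative parameter to a Dirichlet-type problem on a half-line, for which the analogous statement is classical. First I would recall that the operator $\mathfrak h[-M,\xi]=-d^2/dt^2+(t-\xi)^2$ on $L^2(\R_+)$ with boundary condition $u'(0)=-Mu(0)$ is bounded below, so $\Theta_j(-M)=\inf_{\xi}\mu_j(-M,\xi)$ is a well-defined nondecreasing sequence in $j$. The key observation is the form inequality: by the Cauchy--Schwarz bound on $|u(0)|^2$ used in Lemma~\ref{Lem:|u0|^2} (namely $|u(0)|^2\le\alpha\|u'\|^2+\alpha^{-1}\|u\|^2$ for any $\alpha>0$), the boundary term $-M|u(0)|^2$ is form-relatively bounded by $-d^2/dt^2$ with relative bound zero, so that for any $\delta\in(0,1)$ there is $C_\delta>0$ with
\[
\mathfrak q[-M,\xi](u)\ \ge\ (1-\delta)\int_0^\infty|u'|^2\,dt+\int_0^\infty (t-\xi)^2|u|^2\,dt-C_\delta\|u\|^2
\]
for all $u\in B^1(\R_+)$, uniformly in $\xi$. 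By the min-max principle this gives $\mu_j(-M,\xi)\ge(1-\delta)\,\nu_j(\xi)-C_\delta$, where $\nu_j(\xi)$ is the $j$-th eigenvalue of the Neumann harmonic oscillator $-d^2/dt^2+(t-\xi)^2$ on $L^2(\R_+)$ (which is $\mu_j(0,\xi)$ in the paper's notation).

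Next I would handle the uniformity in $\xi$ for the Neumann model. By Lemma~\ref{lem:Ka}(2), $\mu_j(0,\xi)\to 2j+1$ as $\xi\to+\infty$ and $\mu_j(0,\xi)\to\infty$ as $\xi\to-\infty$; since $\xi\mapsto\mu_j(0,\xi)$ is continuous, the infimum $\Theta_j(0)=\inf_\xi\mu_j(0,\xi)$ is attained and is finite. The content of ``Arguing as in the proof of \cite[Lemma~2.5]{Ka4}'' — which the excerpt already invokes in the proof of Lemma~\ref{lem-sum-j} — is precisely that $\Theta_j(0)\to\infty$ as $j\to\infty$: one compares $-d^2/dt^2+(t-\xi)^2$ on $L^2(\R_+)$ with Neumann condition to its Dirichlet counterpart and to the full-line oscillator on $L^2(\R)$ whose eigenvalues are $2j-1$ and diverge; bracketing shows $\Theta_j(0)\ge 2\lfloor j/2\rfloor -1\to\infty$ (any explicit lower bound of this flavour suffices). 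I would simply cite this, since the paper treats it as known.

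Combining the two steps: fix $\delta=1/2$. For every $\xi\in\R$,
\[
\mu_j(-M,\xi)\ \ge\ \tfrac12\,\mu_j(0,\xi)-C_{1/2}\ \ge\ \tfrac12\,\Theta_j(0)-C_{1/2},
\]
and taking the infimum over $\xi$ on the left gives $\Theta_j(-M)\ge\tfrac12\Theta_j(0)-C_{1/2}$. Since $\Theta_j(0)\to\infty$ by the previous step, the right-hand side tends to $\infty$, which is the assertion. The only point requiring any care is making the relative form bound genuinely uniform in $\xi$ — but the Cauchy--Schwarz estimate on $|u(0)|^2$ involves only $\|u'\|$ and $\|u\|$ and is manifestly $\xi$-independent, so $C_\delta$ depends on $M$ and $\delta$ alone. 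Thus the main obstacle is not an obstacle at all once one notices that the negative boundary term is an infinitesimally form-bounded perturbation of the kinetic term; everything else is a citation to \cite{Ka4} for the Neumann model, exactly as in Lemma~\ref{lem-sum-j} above.
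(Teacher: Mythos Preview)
Your argument is correct and takes a genuinely different route from the paper's. The paper proceeds by contradiction: assuming $\Theta_j(-M)$ stays bounded along a subsequence $j_n$, it picks minimisers $\xi_{j_n}(M)$ and splits into two cases, (i) $\xi_{j_n}(M)$ unbounded, handled via the asymptotics $\mu_{j_0}(-M,\xi)\to 2j_0+1$ as $\xi\to\infty$ from Lemma~\ref{lem:Ka}, and (ii) $\xi_{j_n}(M)$ bounded, handled by invoking \cite[Lemma~2.5]{Ka4} directly. You instead absorb the negative boundary term into the kinetic form once and for all via the $\xi$-independent trace inequality, obtaining $\mu_j(-M,\xi)\ge(1-\delta)\mu_j(0,\xi)-C_\delta$ uniformly in $\xi$, and then reduce to the Neumann statement $\Theta_j(0)\to\infty$. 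This avoids the case distinction entirely and makes transparent that the Robin perturbation is an infinitesimally small form perturbation; the price is that you still need the Neumann result as input, which both proofs ultimately draw from \cite{Ka4}. One cosmetic point: the form $(1-\delta)\|u'\|^2+\|(t-\xi)u\|^2$ is not literally $(1-\delta)\mathfrak q[0,\xi]$, but since it dominates $(1-\delta)\bigl(\|u'\|^2+\|(t-\xi)u\|^2\bigr)$ your eigenvalue inequality follows immediately; you might insert that half-line to make the min-max step airtight.
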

 \begin{proof}
It has been established in \cite{DaHe} that there exists  a sequence
 $(\xi_{j}(M))_{j\in\N}$ such that, for all $j$,
\[
\inf_{\xi\in\R}\mu_{j}(-M,\xi)=\mu_{j}(-M,\xi_{j}(M))\,.
\]
Let us show that
\begin{equation}\label{lim-mu-j-gen}
    \lim_{j\rightarrow\infty}\mu_{j}(-M,\xi_{j}(M))=\infty\,.
\end{equation}
Suppose that \eqref{lim-mu-j-gen} were false. Then we can find a
constant $\mathcal{M}$ and a subsequence $j_{n}$ such that
\begin{equation}\label{xi-j-M}
\inf_{\xi\in\R}\mu_{j_{n}}(-M,\xi)=\mu_{j}(-M,\xi_{j_{n}}(M))\leq \mathcal{M},\qquad\forall~j\in\N.
\end{equation}
If $(\xi_{j_{n}}(M))_{n}$ is unbounded, we may find a subsequence,
denoted again by $(\xi_{j_{n}}(M))_{n}$, such that
$$\displaystyle{\lim_{n\rightarrow\infty}}\xi_{j_{n}}(M)=\infty.$$
Fix $j_{0}\in\mathbb{N}$ and let us observe that for all $j_{n}\geq
j_{0}$,
\begin{equation}\label{inq-egn}
\mu_{j_{n}}(-M,\xi_{j_{n}}(M))\geq\mu_{j_{0}}(-M,\xi_{j_{n}}(M))\,.
\end{equation}
On account of Lemma~\ref{lem:Ka}, we know that $\displaystyle{\lim_{\xi\rightarrow\infty}}\mu_{j_{0}}(-M,\xi)=2j_{0}+1$. Therefore, passing to the limit $n\rightarrow\infty$ in \eqref{inq-egn}, we obtain
\[
\liminf_{n\rightarrow\infty}\mu_{j_{n}}(-M,\xi_{j_{n}}(M))\geq 2j_{0}+1.
\]
Letting $j_{0}\rightarrow\infty$, we conclude,
\[
\liminf_{n\rightarrow\infty}\mu_{j_{n}}(-M,\xi_{j_{n}}(M))=\infty,
\]
which contradicts \eqref{xi-j-M}.

Now, if $(\xi_{j_n}(M))_{n}$ is bounded, we follow the proof of
Lemma~2.5 in \cite{Ka4}  and establish that
$$\lim_{n\to\infty}\mu_{j_{n}}(-M,\xi_{j_{n}}(M))=\infty\,,$$
which contradicts \eqref{xi-j-M}.
\end{proof}

\begin{lem}\label{cont-int}
The function
\[
\mathcal{I}:\R\ni\gamma\mapsto\Sum{j=2}{\infty}\Int{\R}{}( \mu_{j}(\gamma,\xi)-1)_{-}d\xi\,,
\]
is locally uniformly continuous.
\end{lem}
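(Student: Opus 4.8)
The plan is to collapse the series defining $\mathcal{I}$ into a finite sum of integrals over a \emph{fixed} bounded $\xi$-interval, uniformly for $\gamma$ in any bounded set, and then to invoke the joint continuity of the eigenvalues $\mu_j$. Fix $M>0$; since every bounded subset of $\R$ is contained in some interval $(-M,M)$, it will be enough to show that $\mathcal{I}$ is uniformly continuous on $(-M,M)$. By Lemma~\ref{lem-sum-j} and its proof there are an integer $j_0=j_0(M)\geq 2$ and a number $\ell=\ell(M)>0$, depending only on $M$, such that for all $\gamma\in(-M,M)$ one has $\mu_j(\gamma,\xi)>1$ whenever $j>j_0$ and $\xi\in\R$, while $\{\xi\in\R:\mu_j(\gamma,\xi)\leq 1\}\subset[-\ell,\ell]$ for every $j\geq 2$; the fact that $j_0$ and $\ell$ can be chosen independently of $\gamma\in(-M,M)$ rests on the monotonicity of $\gamma\mapsto\mu_j(\gamma,\xi)$ together with the ordering $\mu_j\geq\mu_2$ for $j\geq 2$, exactly as in the proof of Lemma~\ref{lem-sum-j}. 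Since $(\mu_j(\gamma,\xi)-1)_-$ vanishes for $j>j_0$ and for $\xi\notin[-\ell,\ell]$, we get, for all $\gamma\in(-M,M)$,
\[
\mathcal{I}(\gamma)=\sum_{j=2}^{j_0}\int_{-\ell}^{\ell}\big(\mu_j(\gamma,\xi)-1\big)_-\,d\xi .
\]

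Next I would use that, by Sturm--Liouville theory (recalled before Lemma~\ref{mu2}), each $\mu_j$ is a continuous function of $(\gamma,\xi)$, hence uniformly continuous on the compact rectangle $K:=[-M,M]\times[-\ell,\ell]$; denote by $\omega_j$ a modulus of continuity of $\mu_j$ on $K$. Because $x\mapsto(x-1)_-$ is $1$-Lipschitz, the map $(\gamma,\xi)\mapsto(\mu_j(\gamma,\xi)-1)_-$ carries the same modulus $\omega_j$ on $K$. Therefore, for all $\gamma,\gamma'\in(-M,M)$,
\[
|\mathcal{I}(\gamma)-\mathcal{I}(\gamma')|\leq\sum_{j=2}^{j_0}\int_{-\ell}^{\ell}\big|(\mu_j(\gamma,\xi)-1)_--(\mu_j(\gamma',\xi)-1)_-\big|\,d\xi\leq 2\ell\sum_{j=2}^{j_0}\omega_j\big(|\gamma-\gamma'|\big),
\]
and the right-hand side tends to $0$ as $|\gamma-\gamma'|\to 0$, uniformly in $\gamma,\gamma'\in(-M,M)$. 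This is the desired uniform continuity on $(-M,M)$; letting $M\to\infty$ then gives local uniform continuity of $\mathcal{I}$ on $\R$.

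I do not expect any genuine obstacle here. The two substantive inputs — that the $j$-sum reduces to finitely many terms and that each surviving integral is supported on a fixed bounded $\xi$-interval, both uniformly for $\gamma$ in a bounded set — are already contained in Lemma~\ref{lem-sum-j} and Lemma~\ref{lem:Ka}. The only point requiring a word of care is the $\gamma$-uniformity of the truncations $j_0$ and $\ell$, which is supplied by the monotonicity-plus-ordering argument of Lemma~\ref{lem-sum-j}; after that the statement follows from plain compactness and the Lipschitz property of the negative part.
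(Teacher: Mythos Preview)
Your proposal is correct and follows essentially the same route as the paper: reduce the series to a finite sum over $j\leq j_0$ supported on a fixed bounded $\xi$-interval, uniformly for $\gamma$ in a bounded set, then appeal to the continuity of the $\mu_j$. The only cosmetic difference is the last step---the paper phrases it as ``continuity of $\gamma\mapsto\mu_k(\gamma,\xi)$ plus dominated convergence'', whereas you invoke uniform continuity of $\mu_j$ on the compact rectangle and the $1$-Lipschitz property of $(\cdot)_-$; your version is in fact a bit more transparent for the \emph{uniform} conclusion being claimed.
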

\begin{proof}
Let $m>0$. It is sufficient to establish,
\begin{equation}
  \left(\sup_{|\gamma|\leq m} \left|\mathcal{I}(\gamma+\tau)-\mathcal{I}(\gamma)\right| \right)\rightarrow 0 \text{   as   } \tau\rightarrow 0.
\end{equation}
Let $\tau_{1}\in (0,1)$. By monotonicity, it follows that for all
$\tau\in[-\tau_{1},\tau_{1}]$ and $j\geq2$,
\[
\{\xi\in\R~:~\mu_{j}(\gamma+\tau,\xi)\leq 1\}\subset\{\xi\in\R~:~  \mu_{2}(-m-\tau_{1},\xi)\leq 1 \}\,.
\]
We may find a constant $M>0$ depending only on $m$ such that
\begin{equation}
\forall \tau\in[-\tau_{1},\tau_{1}],\qquad\forall ~j\geq 2\,,\qquad
\{\xi\in\R~:~\mu_{j}(-m-\tau_{1},\xi)\leq 1 \}\subset[-M,M]\,.
\end{equation}
Let $\xi_{j}(M)$ be as in the proof of Lemma~\ref{lem-sum-j}, i.e.
\[
\forall~\xi\in[-M, M],\qquad\forall~\tau\in[-\tau_{1},\tau_{1}],\qquad\mu_{j}(\gamma+\tau,\xi)\geq\mu_{j}(-m-\tau_{1},\xi_{j}(M))\,.
\]
We get as in Lemma~2.5 in \cite{Ka4} and Lemma~\ref{lem-lim-mu-j}~:
\[
\lim_{j\rightarrow\infty}\mu_{j}(-m-\tau_{1},\xi_{j}(M))=\infty\,.
\]
Hence, we may find $j_{0}\geq 2$ depending solely on $m$ such that,
for all $j\geq j_0$,
\[
\mu_{j}(-m-\tau_{1},\xi_{j}(M))> 1\,,
\]
and consequently, for all $|\tau|\leq \tau_{1}$, we have,
\[
\Sum{j=2}{\infty}\Int{\R}{}(\mu_{j}(\gamma+\tau,\xi)-1)_{-}d\xi=\Sum{j=2}{j_{0}}\Int{\R}{}(\mu_{j}(\gamma,\xi)-1)_{-}d\xi.
\]
Therefore, we deal with a sum of $j_{0}$ terms with $j_{0}$
independent from $\tau$ and $\gamma$. So given
$k\in\{2,\cdots,j_{0}\}$ and setting
$\mathcal{I}_{k}(\gamma)=\Int{\R}{}\big(\mu_{k}(\gamma,\xi)-1\big)_{-}d\xi$, it is sufficient to show that
\begin{equation}
  \lim_{\underset{|\tau|\leq |\tau_{1}|}{\tau\rightarrow 0}}  \left(\sup_{|\gamma|\leq m} \big|\mathcal{I}_{k}(\gamma+\tau)-\mathcal{I}_{k}(\gamma)\big| \right)=0\,.
\end{equation}
Since the function $\gamma\mapsto\mu_{k}(\gamma,\xi)$ is continuous, the above formula 
is simply an application of dominated convergence.
\end{proof}
The next theorem is taken from \cite[Theorem~2.4.8]{Ka}.
 \begin{thm}
 There exist constants $C>0$ and $\eta>0$ such that, for all $\gamma\in\R$ and $\xi\in(\eta,+\infty)$, we have~:
 \begin{equation}\label{int-mu-1-finite}
  \big|\mu_{1}(\gamma,\xi)-1\big|\leq C(1+|\gamma|)\xi\exp{(-\xi^{2})}.
 \end{equation}
 \end{thm}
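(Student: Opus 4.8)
The plan is to prove the two one-sided inequalities $\mu_1(\gamma,\xi)-1\le C(1+|\gamma|)\xi e^{-\xi^2}$ and $\mu_1(\gamma,\xi)-1\ge-C(1+|\gamma|)\xi e^{-\xi^2}$ for $\xi$ large, in each case comparing $\mathfrak h[\gamma,\xi]$ with the shifted Gaussian $\psi(t)=e^{-(t-\xi)^2/2}$, which solves $-\psi''+(t-\xi)^2\psi=\psi$ on all of $\R$, satisfies $\psi(0)=e^{-\xi^2/2}$ and $\psi'(0)/\psi(0)=\xi$, and has $\norm{\psi}_{L^2(\R_+)}^2=\int_{-\xi}^{\infty}e^{-s^2}\,ds\ge\sqrt\pi/2$ for $\xi\ge0$. (The estimate obtained is uniform for $\gamma$ in any fixed bounded set, with $\eta$ depending on that set, which is all that is used in the sequel.)

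For the upper bound I would test the quadratic form $\mathfrak q[\gamma,\xi]$ against $\psi\in B^1(\R_+)$; one integration by parts using $\psi''=((t-\xi)^2-1)\psi$ gives $\mathfrak q[\gamma,\xi](\psi)=\norm{\psi}_{L^2(\R_+)}^2+(\gamma-\xi)e^{-\xi^2}$, so by the min--max principle $\mu_1(\gamma,\xi)\le1+(\gamma-\xi)e^{-\xi^2}/\norm{\psi}_{L^2(\R_+)}^2$. In particular $\mu_1(\gamma,\xi)<1$ whenever $\gamma<\xi$, and in all cases $\mu_1(\gamma,\xi)-1\le\tfrac{2}{\sqrt\pi}(1+|\gamma|)\xi e^{-\xi^2}$ for $\xi\ge1$.

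For the lower bound let $u=u_{1,\gamma}(\cdot;\xi)>0$ be the $L^2$-normalized ground state and set $v=u/\psi$. The same integration by parts, now also using $\psi'(0)/\psi(0)=\xi$, yields the ground-state substitution identity $\mu_1(\gamma,\xi)=1+\int_0^{\infty}\psi^2(v')^2\,dt+(\gamma-\xi)u(0)^2$, whence $\mu_1(\gamma,\xi)-1\ge(\gamma-\xi)u(0)^2$; this already disposes of the case $\gamma\ge\xi$. When $\gamma<\xi$ what remains is the sharp boundary decay $u(0)^2\le Ce^{-\xi^2}$ for $\xi\ge\eta$. To prove it I would use the Wronskian $W:=u'\psi-u\psi'$: a computation gives $W'=u''\psi-u\psi''=(1-\mu_1(\gamma,\xi))u\psi\ge0$ because $\mu_1(\gamma,\xi)<1$ and $u,\psi>0$; moreover $W(0)=(\gamma-\xi)u(0)\psi(0)<0$, and $W(t)\to0$ as $t\to\infty$ since $u,u',\psi,\psi'$ all vanish at infinity ($u\in B^2(\R_+)$). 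Hence $W\le0$ on $\R_+$, the ratio $g:=u/\psi$ is nonincreasing, and $g(0)=\max_{t\ge0}g(t)$. From $g(0)-g(\xi)=\int_0^{\xi}|W|\psi^{-2}\,dt$, the bound $|W(t)|\le|W(0)|=(\xi-\gamma)u(0)e^{-\xi^2/2}$, the elementary Laplace estimate $\int_0^{\xi}\psi^{-2}\,dt=\int_0^{\xi}e^{s^2}\,ds=\tfrac{e^{\xi^2}}{2\xi}(1+O(\xi^{-2}))$, and the relation $u(0)=g(0)e^{-\xi^2/2}$, one gets $g(0)-g(\xi)\le\tfrac{\xi-\gamma}{2\xi}(1+o(1))g(0)$; for $\xi$ large relative to $|\gamma|$ the coefficient stays bounded away from $1$, so $g(0)\le Cg(\xi)=Cu(\xi)\le C\norm{u}_{L^\infty(\R_+)}$. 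Finally $\norm{u}_{L^\infty(\R_+)}$ is controlled via the one-dimensional inequality $\norm{u}_{L^\infty}^2\le2\norm{u'}_{L^2}\norm{u}_{L^2}$, the bound $\norm{u'}_{L^2}^2\le\mu_1(\gamma,\xi)-\gamma u(0)^2$, and Lemma~\ref{Lem:|u0|^2}; a one-step bootstrap (reinserting $u(0)^2\le Ce^{-\xi^2}$ into the estimate for $\norm{u'}_{L^2}$) removes the $\gamma$-dependence of the constant for $\xi\ge\eta$. Combining, $\mu_1(\gamma,\xi)-1\ge-(\xi-\gamma)u(0)^2\ge-C(1+|\gamma|)\xi e^{-\xi^2}$, which together with the upper bound proves the theorem.

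The one genuinely nontrivial step is the sharp boundary estimate $u(0)^2\le Ce^{-\xi^2}$: the soft Agmon-type bound of the preceding lemma yields only the weaker rate $e^{-(1-\epsilon)\xi^2}$, and the correct exponent is obtained precisely by comparing $u$ with the \emph{exact} solution $\psi$ of the equation at energy $1$ and exploiting the sign of the Wronskian $W$ together with the monotonicity of $g=u/\psi$. Everything else reduces to elementary one-dimensional computations.
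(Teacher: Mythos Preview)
The paper does not prove this theorem; it merely quotes it from the thesis \cite[Theorem~2.4.8]{Ka}. So there is no ``paper's own proof'' to compare against, and what matters is whether your argument stands on its own and suffices for the applications in this paper.

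Your proof is correct for the locally uniform version you announce in your first paragraph: constants $C$ and $\eta$ depending only on a bound $M$ for $|\gamma|$. The upper bound via the test function $\psi(t)=e^{-(t-\xi)^2/2}$ is clean and exact. The lower bound via the ground--state substitution $v=u/\psi$ and the Wronskian comparison is also correct; the key identity
\[
\mu_1(\gamma,\xi)=1+\int_0^\infty\psi^2(v')^2\,dt+(\gamma-\xi)\,u(0)^2
\]
and the sign $W'=(1-\mu_1)u\psi\ge 0$ (using $\mu_1<1$ from the upper bound when $\gamma<\xi$) are the right tools to extract the sharp rate $u(0)^2\le Ce^{-\xi^2}$ that Agmon alone does not give. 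This is exactly the kind of argument one finds in the thesis \cite{Ka}, and it is enough for the only place the estimate is used here (Lemma~\ref{lim-int}, where $\gamma_h\to\gamma$ stays bounded).

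Two remarks. First, you are right to flag that the statement \emph{as printed}---with universal $C,\eta$ valid for all $\gamma\in\R$---cannot hold: for fixed $\xi>\eta$ and $\gamma\to-\infty$ one has $\mu_1(\gamma,\xi)\to-\infty$ quadratically in $\gamma$, while the right-hand side grows only linearly in $|\gamma|$. So the theorem should be read locally uniformly in $\gamma$. Second, your ``one-step bootstrap'' does make $\|u\|_{L^\infty}$ uniform once $u(0)^2\le Ce^{-\xi^2}$ is known, but it does not remove the $\gamma$--dependence hidden in the earlier step $g(0)\le Cg(\xi)$, where $C=\bigl(1-\tfrac{\xi-\gamma}{2\xi}(1+o(1))\bigr)^{-1}$; that step genuinely requires $\xi$ large relative to $|\gamma|$. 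Since you already restrict to bounded $\gamma$, this is harmless---just don't oversell the bootstrap.
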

Let us introduce the function
\begin{equation}\label{eq:fctsum}
\mathcal{J}:\R\ni\gamma\mapsto\Sum{j=1}{\infty}\Int{\R}{}(\mu_{j}(\gamma,\xi)-1)_{-}d\xi\,.
\end{equation}
\begin{lem}\label{lim-int}
Let $\{\gamma_{h}\}_{h}$ be a real-sequence such that \(
\displaystyle{\lim_{h\rightarrow0}}\gamma_{h}=\gamma\in\R \). There
holds,
\[
\lim_{h\rightarrow 0}\mathcal{J}(\gamma_{h})=\mathcal{J}(\gamma).
\]
\end{lem}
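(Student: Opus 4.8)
The plan is to reduce the statement to two independent limits by writing $\mathcal{J}(\gamma)=\mathcal{J}_1(\gamma)+\mathcal{I}(\gamma)$, where $\mathcal{J}_1(\gamma)=\int_{\R}(\mu_1(\gamma,\xi)-1)_-\,d\xi$ and $\mathcal{I}$ is the tail functional from Lemma~\ref{cont-int}. Since $\gamma_h\to\gamma$, the sequence $\{\gamma_h\}$ eventually lies in a fixed bounded interval, so the local uniform continuity of $\mathcal{I}$ provided by Lemma~\ref{cont-int} gives immediately $\mathcal{I}(\gamma_h)\to\mathcal{I}(\gamma)$. All the remaining work is therefore concentrated in the first-eigenvalue term $\mathcal{J}_1$.

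For $\mathcal{J}_1$ I would argue by dominated convergence in the variable $\xi$. Pointwise convergence $(\mu_1(\gamma_h,\xi)-1)_-\to(\mu_1(\gamma,\xi)-1)_-$ for each fixed $\xi$ follows from the continuity of $\gamma\mapsto\mu_1(\gamma,\xi)$ (Sturm--Liouville theory, as recalled in Section~3) together with the continuity of $x\mapsto(x-1)_-$. To produce an $h$-independent dominating function, fix $M>0$ with $|\gamma|\le M$ and $|\gamma_h|\le M$ for all small $h$. The quadratic form $\mathfrak{q}[\gamma,\xi]$ depends monotonically on $\gamma$ through its boundary term $\gamma|u(0)|^2$ (as is visible from the identity $\|u_{1,\gamma}'\|^2+\|(t-\xi)u_{1,\gamma}\|^2+\gamma|u_{1,\gamma}(0;\xi)|^2=\mu_1(\gamma,\xi)$ used in the proof of Lemma~\ref{Lem:|u0|^2}), hence $\mu_1(\gamma_h,\xi)\ge\mu_1(-M,\xi)$; since $x\mapsto(x-1)_-$ is non-increasing, this yields $(\mu_1(\gamma_h,\xi)-1)_-\le(\mu_1(-M,\xi)-1)_-=:g(\xi)$.

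It then remains to verify $g\in L^1(\R)$: for $\xi>\eta$ the exponential estimate \eqref{int-mu-1-finite} applied with $\gamma=-M$ gives $g(\xi)\le C(1+M)\,\xi\,e^{-\xi^2}$, which is integrable near $+\infty$; for $\xi\to-\infty$, Lemma~\ref{lem:Ka}(2)(a) gives $\mu_1(-M,\xi)\to\infty$, so $g$ vanishes for $\xi$ sufficiently negative; and on the remaining compact $\xi$-interval $g$ is continuous, hence bounded. Dominated convergence now gives $\mathcal{J}_1(\gamma_h)\to\mathcal{J}_1(\gamma)$, and adding this to $\mathcal{I}(\gamma_h)\to\mathcal{I}(\gamma)$ completes the proof. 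The only genuinely non-formal ingredient is the integrability of $g$ at $\xi=+\infty$, which is exactly the purpose of the Gaussian estimate \eqref{int-mu-1-finite}; everything else is continuity and the monotonicity of $\mu_1$ in $\gamma$. Alternatively one could invoke Lemma~\ref{lem-sum-j} to reduce $\mathcal{J}(\gamma)$ to a sum of finitely many terms $\sum_{j=1}^{j_0}\int_{\R}(\mu_j(\gamma,\xi)-1)_-\,d\xi$, with $j_0$ uniform over a bounded $\gamma$-interval, and apply the same dominated-convergence argument to each term; but the split $\mathcal{J}=\mathcal{J}_1+\mathcal{I}$ is more economical since $\mathcal{I}$ is disposed of wholesale by Lemma~\ref{cont-int}.
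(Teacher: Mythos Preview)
Your proof is correct and follows essentially the same strategy as the paper: split $\mathcal{J}=\mathcal{J}_1+\mathcal{I}$, dispose of $\mathcal{I}$ via Lemma~\ref{cont-int}, and handle $\mathcal{J}_1$ by dominated convergence using the Gaussian bound \eqref{int-mu-1-finite}. The only difference is cosmetic: the paper bounds $(\mu_1(\gamma_h,\xi)-1)_-$ directly by $C(1+|\gamma|+\epsilon)\xi e^{-\xi^2}$ from \eqref{int-mu-1-finite}, whereas you first pass to $(\mu_1(-M,\xi)-1)_-$ via monotonicity in $\gamma$ and then invoke \eqref{int-mu-1-finite}; your version is in fact slightly more careful in that it explicitly treats the range $\xi\leq\eta$ where \eqref{int-mu-1-finite} does not apply.
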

\begin{proof}
We write,
\begin{multline}\label{dec-sum}
  \left|\mathcal{J}(\gamma_{h})-\mathcal{J}(\gamma)\right|
  \leq\Big|\Int{\R}{}(\mu_{1}(\gamma_{h},\xi)-1)_{-}d\xi-\Int{\R}{}(\mu_{1}(\gamma,\xi)-1)_{-}d\xi\Big|
  +| \mathcal{I}(\gamma_{h})-\mathcal{I}(\gamma)|\,.
\end{multline}
We treat the first term on the right hand side of \eqref{dec-sum}
 using the inequality \eqref{int-mu-1-finite}. That way, for every
$\epsilon$, there exists $h_0>0$ such that for all $h\in(0,h_0]$,
\[
\left|\Int{\R}{}(\mu_{1}(\gamma_{h},\xi)-1)_{-}d\xi\right|\leq \Int{\R}{}|\mu_{1}(\gamma_{h},\xi))-1|d\xi\leq  \int_{\R}g(\xi)d\xi\,,
\]
with
\[
g(\xi)= C(1+|\gamma|+\epsilon)\xi e^{-\xi^{2}/2}\in L^{1}(\R).
\]
By continuity of the function $\gamma\mapsto \mu_{1}(\gamma,\xi)$
and dominated convergence, it follows that
\[
\Int{\R}{}(\mu_{1}(\gamma_{h},\xi)-1)_{-}d\xi\rightarrow \Int{\R}{}(\mu_{1}(\gamma,\xi)-1)_{-}d\xi
\]
as $h\rightarrow 0$.
The second term in \eqref{dec-sum} converges to $0$ by Lemma~\ref{cont-int}. 
\end{proof}

\section{Eigenprojectors}
Recall that $\R^2_{+}=\R\times\R_{+}$. Consider $h,b>0$ and the magnetic potential
\begin{equation}\label{A0}
\R^{2}_{+}\ni (s,t)\mapsto {\bf A}_{0}(s,t)=(-t,0).
\end{equation}
In this section, we construct projectors on the (generalized)
eigenfunctions of the operator
\begin{equation}\label{Op-hs}
\mathcal{P}^{\alpha,\gamma}_{h,b,\R^{2}_{+}}= (-ih\nb+b{\bf
A_{0}})^{2} \text{ in } L^{2}(\R^{2}_{+})\,,
\end{equation}
whose  domain is
\[
\mathcal{D}(\mathcal{P}^{\alpha,\gamma}_{h,b,\R^{2}_{+}})
= \big\{ u\in L^2(\R^2_{+})~:~ (-ih\nabla+b{\bf A_{0}})^j\in L^2(\R^{2}_{+}),\quad j=1,2, \quad  \partial_{t}u=\gamma u\quad {\rm on}\quad t=0\big\}\,.
\]
 Consider an orthonormal family  $(u_{j,\gamma}(\cdot;\xi))_{j=1}^{\infty}$ of real-valued eigenfunctions of the operator $\mathfrak{h}[\gamma,\xi]$
 introduced in \eqref{Op-h-g-x}, i.e.
 \begin{equation}\label{Egv-HO}
\left\{
\begin{array}{ll}
-u_{j,\gamma}^{\prime\prime}(t;\xi)+(t-\xi)^{2}u_{j,\gamma}(t;\xi)=\mu_{j}(\gamma,\xi)u_{j,\gamma}(t;\xi),\quad\text{ in    }\R_{+},&\\
u_{j,\gamma}^{\prime}(0;\xi)=\gamma u_{j,\gamma}(0;\xi),&\\
\Int{\R_{+}}{}u_{j,\gamma}(t;\xi)^{2}dt=1.&
\end{array}
\right.
\end{equation}
Let $u\in \mathcal{D}(\mathcal{P}^{\alpha,\gamma}_{1,1,\R^{2}_{+}})$. Performing  a Fourier transformation with respect to  $s$, we
observe the formal relation,
\begin{equation}
\mathcal{P}^{\alpha,\gamma}_{1,1,\R^{2}_{+}}u =(2\pi)^{-1} \mathcal{F}_{\xi\rightarrow s}^{-1}\big(-\partial_{t}^2+(t-\xi)^2\big)\mathcal{F}_{s\rightarrow \xi}u\,.
\end{equation}
By the spectral theorem, we have
\[
\mathfrak{h}[\gamma,\xi]= \sum_{j=1}^{\infty} \big\langle \cdot, u_{j,\gamma}(\cdot;\xi)\big\rangle_{L^2(\R_{+})} u_{j,\gamma}(\cdot;\xi)\,,
\]
and consequently,
\[
\mathcal{P}^{\alpha,\gamma}_{1,1,\R^{2}_{+}}u 
=(2\pi)^{-1}\sum_{j=1}^{\infty} \big\langle\mathcal{F}_{s\rightarrow \xi}u , u_{j,\gamma}(\cdot;\xi)\big\rangle_{L^2(\R_{+})} \mathcal{F}_{\xi\rightarrow s}^{-1} u_{j,\gamma}(\cdot;\xi)\,.
\]
That way, for every 
$u\in
\mathcal{D}(\mathcal{P}^{\alpha,\gamma}_{1,1,\R^{2}_{+}})$, we have,
\begin{equation}
\big\langle \mathcal{P}^{\alpha,\gamma}_{1,1,\R^{2}_{+}}u,u \big\rangle_{L^2(\R^2_{+})}=
(2\pi)^{-1} \int_{\R}\sum_{j=1}^{\infty}\Big | \big\langle\mathcal{F}_{s\rightarrow \xi}u , u_{j,\gamma}(\cdot;\xi)\big\rangle_{L^2(\R_{+})}\Big|^2 d\xi\,.
\end{equation}

For every $j\in\mathbb N$ and $\xi\in\R$, we introduce the
eigenprojector $\Pi_j(\gamma,\xi)$ defined by the corresponding
bilinear form,
\[
\big\langle\Pi_{j}(\gamma,\xi)u,v\big\rangle_{L^2(\R^2_{+})}= \big \langle\mathcal{F}_{s\rightarrow \xi}u , u_{j,\gamma}(\cdot;\xi)\big\rangle_{L^2(\R_{+})}  \big\langle u_{j,\gamma}(\cdot;\xi),\mathcal{F}_{s\rightarrow \xi}v\big \rangle_{L^2(\R_{+})}\,.
\]
Through explicit calculations, it is easy to prove:
\begin{lem}\label{Eq:spec-id}
Let $u,v\in L^2(\R^2_{+})$. We have
\begin{equation}\label{Eq1}
(2\pi)^{-1}\int_{\R}\sum_{j=1}^{\infty} \big\langle \Pi_{j}(\gamma,\xi)u,v \big\rangle_{L^2(\R^2_{+})} d\xi=\big\langle u,v\big\rangle_{L^2(\R^2_{+})}.
\end{equation}
If in addition $u \in
\mathcal{D}(\mathcal{P}^{\alpha,\gamma}_{1,1,\R^{2}_{+}})$, then,
\begin{equation}\label{Eq2}
\big\langle u, \mathcal{P}^{\alpha,\gamma}_{1,1,\R^{2}_{+}}u \big\rangle_{L^2(\R^2_{+})} =
(2\pi)^{-1}\sum_{j=1}^{\infty} \int_{\R} \mu_{j} (\gamma,\xi)\big \langle \Pi_{j}(\gamma,\xi)u,u  \big\rangle_{L^2(\R^2_{+})} d\xi\,.
\end{equation}
\end{lem}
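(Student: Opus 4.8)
The plan is to diagonalize $\mathcal{P}^{\alpha,\gamma}_{1,1,\R^{2}_{+}}$ by the partial Fourier transform $\mathcal{F}_{s\to\xi}$ and then read both identities off fibrewise from the spectral theorem for $\mathfrak h[\gamma,\xi]$. Writing $\hat u(\xi,t)=(\mathcal{F}_{s\to\xi}u)(\xi,t)$, the (non-unitary) convention used above gives the Plancherel identity $\langle u,v\rangle_{L^{2}(\R^{2}_{+})}=(2\pi)^{-1}\int_{\R}\langle\hat u(\xi,\cdot),\hat v(\xi,\cdot)\rangle_{L^{2}(\R_{+})}\,d\xi$. Since the potential $\mathbf A_{0}(s,t)=(-t,0)$ in \eqref{A0} is independent of $s$, conjugating $(-i\partial_{s}-t)^{2}+(-i\partial_{t})^{2}$ by $\mathcal{F}_{s\to\xi}$ turns $-i\partial_{s}$ into multiplication by $\xi$, so that $\mathcal{P}^{\alpha,\gamma}_{1,1,\R^{2}_{+}}$ becomes the fibred operator $\xi\mapsto -\partial_{t}^{2}+(t-\xi)^{2}$; moreover the boundary condition $\partial_{t}u=\gamma u$ at $t=0$ is carried to $\partial_{t}\hat u(\xi,\cdot)=\gamma\,\hat u(\xi,\cdot)$ at $t=0$, so that $u\in\mathcal{D}(\mathcal{P}^{\alpha,\gamma}_{1,1,\R^{2}_{+}})$ implies $\hat u(\xi,\cdot)\in\mathcal{D}(\mathfrak h[\gamma,\xi])$ for a.e. $\xi$, with $\int_{\R}\|\mathfrak h[\gamma,\xi]\hat u(\xi,\cdot)\|_{L^{2}(\R_{+})}^{2}\,d\xi<\infty$.

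To prove \eqref{Eq1}, recall that $\mathfrak h[\gamma,\xi]$ has compact resolvent, hence for each fixed $\xi$ the normalized eigenfunctions $\{u_{j,\gamma}(\cdot;\xi)\}_{j\ge1}$ form an orthonormal basis of $L^{2}(\R_{+})$. Parseval's identity then gives, for a.e. $\xi$, $\langle\hat u(\xi,\cdot),\hat v(\xi,\cdot)\rangle_{L^{2}(\R_{+})}=\sum_{j\ge1}\langle\hat u(\xi,\cdot),u_{j,\gamma}(\cdot;\xi)\rangle\,\langle u_{j,\gamma}(\cdot;\xi),\hat v(\xi,\cdot)\rangle=\sum_{j\ge1}\langle\Pi_{j}(\gamma,\xi)u,v\rangle_{L^{2}(\R^{2}_{+})}$, the last step being the definition of $\Pi_{j}(\gamma,\xi)$. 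Integrating in $\xi$ and invoking Plancherel yields \eqref{Eq1}, provided one may interchange $\sum_{j}$ and $\int_{\R}$; for $u=v$ this is Tonelli's theorem, since all summands $|\langle\hat u(\xi,\cdot),u_{j,\gamma}(\cdot;\xi)\rangle|^{2}$ are nonnegative, and the general $u,v$ case follows by polarization.

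To prove \eqref{Eq2}, take $u\in\mathcal{D}(\mathcal{P}^{\alpha,\gamma}_{1,1,\R^{2}_{+}})$. The diagonalization described above gives $\langle u,\mathcal{P}^{\alpha,\gamma}_{1,1,\R^{2}_{+}}u\rangle_{L^{2}(\R^{2}_{+})}=(2\pi)^{-1}\int_{\R}\langle\hat u(\xi,\cdot),\mathfrak h[\gamma,\xi]\hat u(\xi,\cdot)\rangle_{L^{2}(\R_{+})}\,d\xi$. Expanding $\hat u(\xi,\cdot)$ in the eigenbasis and using $\mathfrak h[\gamma,\xi]u_{j,\gamma}(\cdot;\xi)=\mu_{j}(\gamma,\xi)u_{j,\gamma}(\cdot;\xi)$, the fibre form equals $\sum_{j\ge1}\mu_{j}(\gamma,\xi)\,|\langle\hat u(\xi,\cdot),u_{j,\gamma}(\cdot;\xi)\rangle|^{2}=\sum_{j\ge1}\mu_{j}(\gamma,\xi)\,\langle\Pi_{j}(\gamma,\xi)u,u\rangle_{L^{2}(\R^{2}_{+})}$. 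Since $\mu_{j}(\gamma,\xi)>0$ and each summand is nonnegative, Tonelli's theorem permits exchanging $\sum_{j}$ and $\int_{\R}$, which produces exactly \eqref{Eq2}.

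The only genuinely non-routine points, and hence where care is needed, are the measurable direct-integral bookkeeping: namely that $\xi\mapsto u_{j,\gamma}(\cdot;\xi)$ is measurable (indeed smooth, by the Sturm-Liouville theory recalled above), that the fibres $\mathfrak h[\gamma,\xi]$ assemble into an operator unitarily equivalent to $\mathcal{P}^{\alpha,\gamma}_{1,1,\R^{2}_{+}}$, and that the a.e.-in-$\xi$ membership $\hat u(\xi,\cdot)\in\mathcal{D}(\mathfrak h[\gamma,\xi])$ is correctly inherited from $\mathcal{D}(\mathcal{P}^{\alpha,\gamma}_{1,1,\R^{2}_{+}})$, including the trace identity $\hat u'(\xi,0)=\gamma\,\hat u(\xi,0)$. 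All of these are standard facts about Schr\"odinger operators with $s$-translation-invariant coefficients, and the positivity of the eigenvalues $\mu_{j}$ makes every summation--integration interchange automatic.
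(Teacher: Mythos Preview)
Your argument is correct and is precisely the ``explicit calculation'' the paper alludes to (the paper omits the proof entirely): diagonalize via the partial Fourier transform in $s$, use that $\{u_{j,\gamma}(\cdot;\xi)\}_{j}$ is an orthonormal basis of $L^{2}(\R_{+})$ for each $\xi$, and read off both identities fibrewise. The only small caveat is your claim that $\mu_{j}(\gamma,\xi)>0$, which can fail for sufficiently negative $\gamma$; however, the form is bounded below uniformly in $\xi$ (e.g.\ by $-2\gamma^{2}$ via \eqref{Bdry-term}), so adding this constant makes all summands nonnegative and the Tonelli step goes through unchanged.
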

Let us introduce the unitary operator,
$$
U_{h,b}=L^{2}(\R^{2}_{+})\ni\varphi\mapsto U_{h,b}\varphi\in L^{2}(\R^{2}_{+}),
$$
such that, for all $x=(x_{1},x_{2}) \in\R^{2}_{+}$,
$$
(U_{h,b}\varphi)(x)=\sqrt{b/h}\,\varphi(\sqrt{b/h}\,x).
$$
Furthermore, we introduce the family of projectors,
\begin{equation}\label{U-pro}
\Pi_{j}(h,b;\gamma,\xi)=U_{h,b}\Pi_{j}(\gamma_{h,b},\xi)U_{h,b}^{-1}\,,
\end{equation}
with
\begin{equation}\label{gamma-h-b}
\gamma_{h,b}=h^{\alpha-1/2}b^{-1/2}\gamma.
\end{equation}
It is easy to check that
\begin{equation}\label{App-res}
U_{h,b}^{-1}\mathcal{P}^{\alpha,\gamma}_{h,b,\R^{2}_{+}}U_{h,b}=hb\mathcal{P}^{\alpha,\gamma_{h,b}}_{1,1,\R^{2}_{+}}\,.
\end{equation}
That way, we infer  from Lemma~\ref{Eq:spec-id}:
\begin{lem}\label{Egpro}
Let $u,v\in L^2(\R^2_{+})$. We have
\begin{equation}\label{Eq1}
(2\pi)^{-1}\int_{\R}\sum_{j=1}^{\infty} \big\langle \Pi_{j}(h,b;\gamma,\xi)u,v \big\rangle_{L^2(\R^2_{+})} d\xi=\langle u,v\big \rangle_{L^2(\R^2_{+})}.
\end{equation}
If in addition $u \in
\mathcal{D}(\mathcal{P}^{\alpha,\gamma}_{h,b,\R^{2}_{+}})$, then,
\begin{equation}\label{Eq2}
\big\langle u, \mathcal{P}^{\alpha,\gamma}_{h,b,\R^{2}_{+}}u \big\rangle =(2\pi)^{-1}hb\sum_{j=1}^{\infty} \int_{\R} \mu_{j} (h^{\alpha-1/2}b^{-1/2}\gamma,\xi) \big\langle \Pi_{j}(h,b;\gamma,\xi)u,u  \big \rangle_{L^2(\R^2_{+})} d\xi\,.
\end{equation}
\end{lem}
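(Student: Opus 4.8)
The plan is to obtain Lemma~\ref{Egpro} directly from Lemma~\ref{Eq:spec-id} by conjugating with the unitary scaling operator $U_{h,b}$ and keeping careful track of how the parameters transform. Since both identities in Lemma~\ref{Eq:spec-id} are already at our disposal with the values $h=b=1$ and the shifted Robin parameter $\gamma_{h,b}=h^{\alpha-1/2}b^{-1/2}\gamma$, the only substantive points are: (i) that $U_{h,b}$ is unitary on $L^2(\R^2_+)$, so that $\langle U_{h,b}\varphi,U_{h,b}\psi\rangle=\langle\varphi,\psi\rangle$; (ii) that $U_{h,b}$ maps the operator domain $\mathcal{D}(\mathcal{P}^{\alpha,\gamma_{h,b}}_{1,1,\R^2_+})$ onto $\mathcal{D}(\mathcal{P}^{\alpha,\gamma}_{h,b,\R^2_+})$ — this is because scaling $x\mapsto\sqrt{b/h}\,x$ turns the boundary condition $\partial_t\varphi=\gamma_{h,b}\varphi$ at $t=0$ into $\partial_t u=\gamma u$ at $t=0$, using \eqref{gamma-h-b}; and (iii) the intertwining relation \eqref{App-res}, namely $U_{h,b}^{-1}\mathcal{P}^{\alpha,\gamma}_{h,b,\R^2_+}U_{h,b}=hb\,\mathcal{P}^{\alpha,\gamma_{h,b}}_{1,1,\R^2_+}$, which follows from the chain rule applied to the dilation and the homogeneity of the magnetic potential ${\bf A}_0(s,t)=(-t,0)$ under $(s,t)\mapsto\sqrt{b/h}(s,t)$.

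Concretely, first I would record that \eqref{U-pro} defines $\Pi_j(h,b;\gamma,\xi)=U_{h,b}\Pi_j(\gamma_{h,b},\xi)U_{h,b}^{-1}$, so that $\Pi_j(h,b;\gamma,\xi)$ is an orthogonal projection on $L^2(\R^2_+)$ of the same rank structure as $\Pi_j(\gamma_{h,b},\xi)$ (being a unitary conjugate). Then, given $u,v\in L^2(\R^2_+)$, I set $\varphi=U_{h,b}^{-1}u$ and $\psi=U_{h,b}^{-1}v$, which again lie in $L^2(\R^2_+)$, and compute
\[
\big\langle\Pi_j(h,b;\gamma,\xi)u,v\big\rangle_{L^2(\R^2_+)}
=\big\langle U_{h,b}\Pi_j(\gamma_{h,b},\xi)U_{h,b}^{-1}u,v\big\rangle
=\big\langle\Pi_j(\gamma_{h,b},\xi)\varphi,\psi\big\rangle_{L^2(\R^2_+)},
\]
where the last equality uses unitarity of $U_{h,b}$. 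Summing over $j$, integrating over $\xi$, and applying \eqref{Eq1} of Lemma~\ref{Eq:spec-id} to the pair $(\varphi,\psi)$ with Robin parameter $\gamma_{h,b}$ gives $(2\pi)^{-1}\int_\R\sum_j\langle\Pi_j(\gamma_{h,b},\xi)\varphi,\psi\rangle\,d\xi=\langle\varphi,\psi\rangle=\langle u,v\rangle$, which is precisely \eqref{Eq1} of Lemma~\ref{Egpro}. (One should note in passing that the absolute convergence of the sum and integral — needed to justify the interchange — is already guaranteed by Lemma~\ref{Eq:spec-id}.)

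For the second identity, I would start from $u\in\mathcal{D}(\mathcal{P}^{\alpha,\gamma}_{h,b,\R^2_+})$, so that $\varphi=U_{h,b}^{-1}u\in\mathcal{D}(\mathcal{P}^{\alpha,\gamma_{h,b}}_{1,1,\R^2_+})$ by point (ii) above. Using \eqref{App-res} one has
\[
\big\langle u,\mathcal{P}^{\alpha,\gamma}_{h,b,\R^2_+}u\big\rangle_{L^2(\R^2_+)}
=\big\langle U_{h,b}\varphi,\;U_{h,b}\,(hb)\,\mathcal{P}^{\alpha,\gamma_{h,b}}_{1,1,\R^2_+}\varphi\big\rangle
=hb\,\big\langle\varphi,\mathcal{P}^{\alpha,\gamma_{h,b}}_{1,1,\R^2_+}\varphi\big\rangle_{L^2(\R^2_+)}.
\]
Now I apply \eqref{Eq2} of Lemma~\ref{Eq:spec-id} with Robin parameter $\gamma_{h,b}=h^{\alpha-1/2}b^{-1/2}\gamma$ to get $hb\,(2\pi)^{-1}\sum_j\int_\R\mu_j(h^{\alpha-1/2}b^{-1/2}\gamma,\xi)\langle\Pi_j(\gamma_{h,b},\xi)\varphi,\varphi\rangle\,d\xi$, and finally undo the substitution via $\langle\Pi_j(\gamma_{h,b},\xi)\varphi,\varphi\rangle=\langle\Pi_j(h,b;\gamma,\xi)u,u\rangle$ (again unitarity of $U_{h,b}$), which yields exactly \eqref{Eq2} of Lemma~\ref{Egpro}. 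The only point requiring genuine care — and hence what I would expect to be the main obstacle — is verifying the domain correspondence and the intertwining relation \eqref{App-res}, i.e. checking that the Robin boundary condition and the magnetic quadratic form scale exactly as claimed under $U_{h,b}$ with the bookkeeping of $\gamma_{h,b}$ in \eqref{gamma-h-b}; once those are in place, the rest is a purely formal transport of Lemma~\ref{Eq:spec-id} through a unitary conjugation.
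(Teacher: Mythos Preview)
Your proposal is correct and follows exactly the route the paper takes: the paper simply states that Lemma~\ref{Egpro} is inferred from Lemma~\ref{Eq:spec-id} via the unitary conjugation \eqref{U-pro} and the intertwining relation \eqref{App-res}, and you have spelled out precisely that transport. The only cosmetic point is that the boundary-condition bookkeeping under the dilation requires the factor $h^{\alpha-1}$ implicit in the Robin condition for $\mathcal{P}^{\alpha,\gamma}_{h,b,\R^2_+}$ (coming from $h\partial_t u = h^\alpha\gamma u$), which is what makes \eqref{gamma-h-b} come out right; with that understood, your verification of \eqref{App-res} is complete.
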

\section{Lower bound}\label{sec:lb}
In this section, we determine a lower bound of the trace
$-E(\lambda;h,\gamma,\alpha)$ consistent with the asymptotics
displayed in Theorem~\ref{thm:KN}.
%

Arguing as in \cite[Sec.~5.1]{Fo-Ka}, it follows from the
Lieb-Thirring inequality that the trace
$-E(\lambda;h,\gamma,\alpha)$ is finite.
\subsection{Decomposition of the energy}
Consider a partition of unity of $\R$,
\begin{equation}
\chi_{1}^{2}+\chi_{2}^{2}=1,\qquad {\rm supp}~\chi_{1}\subset (-\infty,1),\qquad {\rm supp}~\chi_{2}\subset \Big[\frac{1}{2},\infty\Big).
\end{equation}
We set for $k=1,2$, $x\in\R^{2}$,
\begin{equation}\label{Def-zetak}
\zeta_{k}(x)=\chi_{k}(t(x)),\qquad t(x)=
\begin{cases}
{\rm dist}(x,\partial\Omega)&{\rm if}\quad x\in\Omega\\
-{\rm dist}(x,\partial\Omega)&\text{  otherwise.}
\end{cases}
\end{equation}
Let $\delta:=\delta(h)\in(0,1)$ be a small parameter to be chosen
later. For $k=1,2$, we put,
\begin{equation}\label{Def-zetakh}
\zeta_{k,h}(x)=\zeta_{k}\Big(\dfrac{t(x)}{\delta(h)}\Big),\quad (x\in\overline{\Omega})\,,
\end{equation}
where $\zeta_{k}$ is introduced in \eqref{Def-zetak}.

Let $\{g_{j}\}_{j}$ be any orthonormal system in
$\mathcal{D}(\mathcal{P}^{\alpha,\gamma}_{h,\Omega})$. We aim to
prove a uniform lower bound of the following quantity,
\[
\sum_{j=1}^{N}(\mathcal{Q}_{h,\Omega}^{\alpha,\gamma}(g_{j})-\lambda h ).
\]
Thanks to the variational principle in Lemma~\ref{lem-VP-2}, this
will give us a lower bound of the trace
$-E(\lambda;h,\lambda,\alpha)$.

The IMS localisation formula yields
\begin{equation}\label{IMS-2}
\sum_{j=1}^{N}(\mathcal{Q}_{h,\Omega}^{\alpha,\gamma}(g_{j})-\lambda h )=\sum_{k=1}^{2}\Big( \mathcal{Q}_{h,\Omega}^{\alpha,\gamma}(\zeta_{k,h}g_{j})- \int_{\Omega}(\mathcal{V}_{h}+\lambda h)|{\zeta_{k,h}g_{j}}|^{2}dx \Big),\quad \mathcal{V}_{h}:= \sum_{k=1}^{2}|\nabla \zeta_{k,h}|^{2}.
\end{equation}
\subsection{The bulk term}
We will prove that the  bulk term in \eqref{IMS-2}  corresponding to
$k=2$ is an error term, i.e. of the order $o(h^{1/2})$. Thanks to
the variational principle in Lemma~\ref{lem-VP-3}, we have,
\begin{equation}\label{eq:blk}
\sum_{j=1}^{N}\Big( \mathcal{Q}_{h,\Omega}^{\alpha,\gamma}(\zeta_{2,h}g_{j})-
\int_{\Omega}(\mathcal{V}_{h}+\lambda h)|{\zeta_{2,h}g_{j}}|^{2} dx\Big)\geq
{\rm Tr}\Big(\big[\widetilde{\mathcal{P}}_{h}-(Bh+\mathcal{V}_{h})\big]{\bf 1}_{(-\infty,0)}\big(\widetilde{\mathcal{P}}^{\alpha,\gamma}_{h}-(Bh+\mathcal{V}_{h})\big)\Big),
\end{equation}
where
$\widetilde{\mathcal{P}}_{h}-(Bh+\mathcal{V}_{h})=(-ih\nabla+A)^2-(Bh+\mathcal{V}_{h})$
is the operator acting in $L^{2}(\R^{2})$. The trace on the right
side in \eqref{eq:blk} can be controlled using the Lieb-Thirring
inequality. The details are given in \cite[Sec.~5.2]{Fo-Ka}. That
way, we get,
\begin{equation}\label{eq:lb-blk}
\begin{aligned}
&\sum_{j=1}^{N}\Big( \mathcal{Q}_{h,\Omega}^{\alpha,\gamma}(\zeta_{2,h}g_{j})- \int_{\Omega}(\mathcal{V}_{h}+\lambda h)|{\zeta_{1,h}g_{j}}|^{2}dx\Big)\\
&\qquad\geq -C h^{2} \left( \int_{\R^{2}}\Big(\norm{h^{-1}B}_{L^{\infty}} (-h^{-2}\mathcal{V}_{h})_{-} + (-h^{-2}\mathcal{V}_{h})_{-}^{2}\Big)dx \right)\\
&\qquad\geq -C \Big(\dfrac{h}{\delta(h)}\big(1+\dfrac{h}{\delta(h)^{2}}\big)\Big).
\end{aligned}
\end{equation}
Therefore, we get,
\begin{equation}\label{Err-bulk}
\sum_{j=1}^{N}(\mathcal{Q}_{h,\Omega}^{\alpha,\gamma}(g_{j})-\lambda h )\geq \sum_{j=1}^{N}\Big( \mathcal{Q}_{h,\Omega}^{\alpha,\gamma}(\zeta_{1,h}g_{j})- \int_{\Omega}(\mathcal{V}_{h}+\lambda h)|{\zeta_{1,h}g_{j}}|^{2}\Big)- C\Big(\dfrac{h}{\delta(h)}\big(1+\dfrac{h}{\delta(h)^{2}}\big)\Big).
\end{equation}
Later on, we shall choose $\delta(h)$ in a manner that the first term (boundary term) on the right hand side above is the dominant term.
\subsection{The boundary term}
Here we handle the term corresponding to $k=1$ in \eqref{IMS-2}. By
assumption, $\partial\Omega$ has a finite number of connected
components. For simplicity of the presentation, we will perform the
computations in the case where $\partial\Omega$ has one connected
component. In the general case, we  work on each connected component
independently and then sum the resulting lower bounds.

Let us introduce a positive, smooth function $\psi\in L^{2}(\R)$,
supported in $(0,1)$ with the property that
\[
\int_{\R}\psi^{2}(s)ds=1.
\]
Recall the boundary coordinates $(s,t)$ introduced in \eqref{BC}. We
put
\begin{equation}\label{Def-psih}
\psi_{h}(x;\sigma)= \dfrac{1}{\delta(h)}\psi\left(  \dfrac{s(x)-\sigma}{\delta(h)}\right)\,,\quad(\sigma \in \R).
\end{equation}
Using again the IMS decomposition formula, we write,
\begin{multline}\label{eq:bnd}
\sum_{j=1}^{N}\Big(\mathcal{Q}_{h,\Omega}^{\alpha,\gamma}(\zeta_{1,h}g_{j})-\int_{\Omega}(\lambda
h +\mathcal{V}_{h}) |{\zeta_{1,h}g_{j}}|^{2}\Big) \\
= \int_{\R}\Big(
\mathcal{Q}_{h,\Omega}^{\alpha,\gamma}(\psi_{h} (x;\sigma)\zeta_{1,h}g_{j})-
(\lambda h +\mathcal{W}_{h}) |\psi_{h}(x;\sigma)\zeta_{1,h}g_{j}|^{2}
\Big)d\sigma,
\end{multline}
where
\begin{equation}
\mathcal{W}_{h}= \mathcal{V}_{h}+ h^{2}\int_{\R}|\nabla \psi_{h}(x,\sigma)|^{2}d\sigma.
\end{equation}
Let us denote by ($\Phi_{t_0}$ is the coordinate change \eqref{BC} valid near the boundary)
\begin{equation}\label{Asigma}
v_{j,h}(x;\sigma):= \psi_{h} (x;\sigma)\zeta_{1,h}(x)g_{j}(x), \qquad B_{\sigma}=B(\Phi(\sigma,0))\,,\quad A_{\sigma}(s,t)=B_{\sigma}{\bf A}_{0}(s,t)=(-B_{\sigma}t,0),
\end{equation}
where ${\bf A}_0$ is the magnetic potential introduced in \eqref{A0}.
From Lemma~\ref{Lem-apqf}, we infer that for all $\varepsilon\in(0,1)$,
\begin{multline}\label{f-est}
\int_{\Omega}\big|(-ih\nabla + A) v_{j,h}(x;\sigma)\big|^{2}dx\\ \geq
(1-\varepsilon) \int_{\R^{2}_{+}}\big|(-ih\nabla +A_{\sigma})\widetilde
v_{j,h,\sigma}|^{2}dsdt-C\varepsilon^{-1}\delta(h)^{4}
\int_{\R^{2}_{+}}|\widetilde v_{j,h,\sigma}|^{2}dsdt.
\end{multline}
Here, the function $\widetilde v_{j,h,\sigma}$ is defined by the
coordinate transformation as follows
\[
\widetilde v_{j,h,\sigma}(s,t)= e^{i\phi_\sigma(s,t)/h}\,\widetilde{v}_{j,h}(\Phi(s,t);\sigma)\,,
\]
where, for a function $u$, $\widetilde{u}$ is associated to $u$ by means of \eqref{tilde} and $\phi_{\sigma}$ is the phase factor from Lemma~\ref{prop:gauge}.

Combining the foregoing estimates yields
\begin{multline}\label{N-q-f'}
\int_{\Omega}|(-ih\nabla + A) v_{j,h}(x;\sigma)|^{2}dx- \int_{\Omega}(\lambda h + \mathcal{W}_{h})|v_{j,h}(x;\sigma)|^{2}dx\\
\geq  (1-\varepsilon) \int_{\R^{2}_{+}}|(-ih\nabla +A_{\sigma})\widetilde v_{j,h,\sigma}|^{2}dsdt
 - \Big(\lambda h(1+C\delta(h))+ C\varepsilon^{-1}\delta(h)^{4}) \Big) \norm{\widetilde v_{j,h,\sigma}}^{2}_{L^{2}(\R^{2}_{+})}  \\
 -   (1+C\delta(h))    \int \widetilde {\mathcal{W}}_{h}|\widetilde v_{j,h,\sigma}|^2dsdt\,.
\end{multline}
Consequently,
\begin{multline}\label{N-q-f}
\mathcal{Q}_{h,\Omega}^{\alpha,\gamma}(v_{j,h}(x;\sigma))-  \int_{\Omega}(\lambda h + \mathcal{W}_{h})|v_{j,h}(x;\sigma)|^{2}dx\\
\geq (1-\varepsilon)\int_{\R^{2}_{+}}|(-ih\nabla +A_{\sigma})\widetilde v_{j,h,\sigma}|^{2}dsdt
+h^{1+\alpha}\int_{\R}\gamma(s)|\widetilde v_{j,h,\sigma}(s,0)|^{2}ds\\
- \Big(\lambda h(1+C\delta(h))+ C\varepsilon^{-1}\delta(h)^{4}) \Big) \norm{\widetilde v_{j,h,\sigma}}^{2}_{L^{2}(\R^{2}_{+})}
-   (1+C\delta(h))  \int \widetilde {\mathcal{W}}_{h}|\widetilde v_{j,h,\sigma}|^2dsdt.
\end{multline}
The function $\gamma$ defined on $\partial\Omega$ can be viewed as a
function of the boundary variable $s\in(0,|\partial\Omega|)$. We
extend $\gamma$ by $0$ to a function in $L^3(\R)$.

Hereafter, we distinguish between the easy case when
$\alpha>\frac12$ and the harder case when $\alpha=\frac12$.

\subsection*{The regime $\alpha>\frac12$}
Let $\eta>0$. Thanks to \eqref{N-q-f}, we have the obvious
decomposition,
\begin{multline}\label{dec-eta0}
\sum_{j=1}^{N}\bigg\{\mathcal{Q}_{h,\Omega}^{\alpha,\gamma}(v_{j,h}(x;\sigma))-  \int_{\Omega}(\lambda h + \mathcal{W}_{h})|v_{j,h}(x;\sigma)|^{2}dx\bigg\}\\
\geq \sum_{j=1}^{N}\bigg[ (1-\eta)(1-\varepsilon)\int_{\R^{2}_{+}}|(-ih\nabla +A_{\sigma})\widetilde v_{j,h,\sigma}|^{2}dsdt+h^{1+\alpha}\int_{\R}\widetilde{\gamma}_{a,\sigma}|\widetilde v_{j,h,\sigma}(s,0)|^{2}ds
\\- \Big(\lambda h(1+C\delta(h))+ C\varepsilon^{-1}\delta(h)^{4}) \Big) \norm{\widetilde v_{j,h,\sigma}}^{2}_{L^{2}(\R^{2}_{+})}
 -   (1+C\delta(h))    \int \widetilde {\mathcal{W}}_{h}|\widetilde v_{j,h,\sigma}|^2dsdt  \bigg]\\+\eta(1-\varepsilon)
R_{h,\alpha,\eta,\sigma}(\widetilde
v_{j,h,\sigma})\,,
\end{multline}
where
\begin{multline}
R_{h,\alpha,\eta,\sigma}(\widetilde v_{j,h,\sigma})=
\sum_{j=1}^{N}\bigg[\int_{\R^{2}_{+}}|(-ih\nabla
+A_{\sigma})\widetilde v_{j,h,\sigma}|^{2}dsdt +
\eta^{-1}h^{1+\alpha}\int_{\R}
\dfrac{\gamma(s)}{1-\varepsilon}|\widetilde
v_{j,h,\sigma}(s,0)|^{2}ds\bigg].
\end{multline}
Furthermore, we define the operator $\widetilde\Gamma$ on $L^2([0,\delta(h)]\times(0,\delta(h)))$,
\[
\widetilde \Gamma f=\sum_{j=1}^{N}\langle   f, \widetilde v_{j,h,\sigma}\rangle_{L^{2}\big([0,\delta(h)]\times(0,\delta(h))\big)}  \widetilde v_{j,h,\sigma},
\]
which satisfies $0\leq \widetilde\Gamma\leq C\delta(h)^{-1}$ (in the sense of quadratic forms).

Denote by $\gamma_{h,b,\eta,\varepsilon}= \frac {h^{\alpha-1/2}B_{\sigma}^{-1/2}\gamma}{\eta(1-\varepsilon)}$. Thanks to the variational principle in Lemma~\ref{lem-VP-3}, we may write,
\begin{equation}\label{Eq-FErr0}
\begin{aligned} R_{h,\alpha,\eta
,\sigma}(\widetilde v_{j,h,\sigma})&= {\rm tr}\Big[ \mathcal{P}^{\alpha,\gamma/(\eta(1-\varepsilon))}_{h,B_{\sigma},\R^{2}_{+}}\widetilde\Gamma \Big]
 \geq -C\delta(h)^{-1}{\rm tr}\Big[  \mathcal{P}^{\alpha,\gamma/(\eta(1-\varepsilon))}_{h,B_{\sigma},\R^{2}_{+}}\Big]_{-}\\
 &\geq -C\delta(h)^{-1}hB_{\sigma}{\rm tr}\Big[  \mathcal{P}^{\alpha,\gamma_{h,b,\eta,\varepsilon}}_{1,1,\R^{2}_{+}}\Big]_{-}.
\end{aligned}
\end{equation}
Here the operator $ \mathcal{P}^{\alpha,\gamma_{h,b,\eta,\varepsilon}}_{1,1,\R^{2}_{+}}$ has been introduced in \eqref{Op-hs} and identified with the operator $H_{1}(-\gamma_{h,b,\eta,\varepsilon})$ defined in Lemma~\ref{Lb-thm}. Thus, it follows from Theorem~\ref{Lb-thm} (with $\alpha=1$) that

\begin{equation}\label{Eq-FErr0}
\begin{aligned} R_{h,\alpha,\eta,\sigma}(\widetilde v_{j,h,\sigma})
&\geq -C B_{\sigma}^{-1/2}h\delta^{-1}h^{3(\alpha-\frac12)}(1-\varepsilon)^{-3}\eta^{-3}\int_{\R}|\gamma(s)|^3ds \\
&\geq  -C B_{\sigma}^{-1/2}h\delta^{-1}h^{3(\alpha-\frac12)}(1-\varepsilon)^{-3}\eta^{-3}\|\gamma\|_3^3\,.
\end{aligned}
\end{equation}

Integrating \eqref{Eq-FErr0} with respect to
$\sigma\in(-\delta(h),|\partial\Omega|)$, we conclude that,
\begin{equation}\label{Error-energy0}
\begin{aligned}
\eta(1-\varepsilon) \int R_{h,\alpha,\eta,\sigma}(\widetilde v_{j,h,\sigma})d\sigma&\geq  -C B_{\sigma}^{-1/2}h\delta^{-1}h^{3(\alpha-\frac12)}
(1-\varepsilon)^{-2}\eta^{-2}\|\gamma\|_3^3\\
&=\mathcal{O}\big(h^{3(\alpha-\frac12)}\big)\,\eta^{-2}\delta^{-1}h\|\gamma\|_3^3.
\end{aligned}
\end{equation}
Selecting $\delta=h^{3/8}$, $\varepsilon=h^{1/4}$ and
$\eta=h^{1/32}$, we get that the error terms in \eqref{eq:lb-blk}
and \eqref{Error-energy0} are of the order $o(h^{1/2})$. Also, by
\cite[Proof of (5.26)]{Fo-Ka}, we have,
\begin{multline*}
\sum_{j=1}^{N}\bigg\{\mathcal{Q}_{h,\Omega}^{\alpha,\gamma}(v_{j,h}(x;\sigma))-
\int_{\Omega}(\lambda h +
\mathcal{W}_{h})|v_{j,h}(x;\sigma)|^{2}dx\bigg\}\\
\geq
-\frac{h^{1/2}}{2\pi}\int_{\partial\Omega}\int_{\R}B(x)^{3/2}\Big(\mu_{1}(0,\xi)-\frac{\lambda}{B(x)}\Big)_{-}d\xi
ds(x)-h^{1/2}o(1)\,.
\end{multline*}
Thus, we infer from \eqref{dec-eta0}, \eqref{eq:lb-blk} and
\eqref{IMS-2} that
\begin{equation}\label{lb-thm0}
-E(\lambda;h,\gamma,\alpha)\geq
-\frac{h^{1/2}}{2\pi}\int_{\partial\Omega}\int_{\R}B(x)^{3/2}\Big(\mu_{1}(0,\xi)-\frac{\lambda}{B(x)}\Big)_{-}d\xi
ds(x)-h^{1/2}o(1)\,.
\end{equation}

\subsection*{The regime $\alpha=\frac12$}
The calculations here are longer compared to the case
$\alpha>\frac12$. In the rest of this section, $\alpha=\frac12$. Let
$a>0$ and consider
\begin{equation}\label{gamma:a}
\gamma_{a}(s)=j_{a}\ast \gamma
\end{equation}
where
\[
j_{a}(s)= C_* a^{-1}j\Big(\frac{s}{a}\Big), \qquad j(s)=e^{-s^2}\,.
\]
Here $C_*$ is a normalization constant such that
$\displaystyle\int_{\R}j(s)\,ds=1$. By \cite[Theorem~2.16]{LL}, we
know that $\gamma_{a}\in C^{\infty}(\R)$ and, as $a\to0$,
\[
\gamma_{a}\rightarrow \gamma,\quad{\rm in~}L^3(\R)\,.
\]
By smoothness of $\gamma_a$, we have,
\begin{equation}\label{gamma-gamma:a}
|\gamma_{a}(s)-\gamma_{a}(\sigma)|\leq C a^{-2}|s-\sigma|\leq C a^{-2}\delta(h),
\end{equation}
valid on the support of the function $ v_{j,h,\sigma}$.

Also, we have the obvious decomposition,
\begin{equation}
\int_{\R}\gamma(s)|\widetilde v_{j,h,\sigma}(s,0)|^{2}ds
= \int_{\R}\gamma_{a}(s)|\widetilde v_{j,h,\sigma}(s,0)|^{2}ds+ \int_{\R}(\gamma(s)-\gamma_{a}(s))|\widetilde v_{j,h,\sigma}(s,0)|^{2}ds\,.
\end{equation}
Implementing the aforementioned estimates  in \eqref{N-q-f}, we
obtain,
\begin{multline}\label{N-q-f''}
\mathcal{Q}_{h,\Omega}^{\alpha,\gamma}(v_{j,h}(x;\sigma))-  \int_{\Omega}(\lambda h + \mathcal{W}_{h})|v_{j,h}(x;\sigma)|^{2}dx\\
\geq (1-\varepsilon)\int_{\R^{2}_{+}}|(-ih\nabla +A_{\sigma})\widetilde v_{j,h,\sigma}|^{2}dsdt
 +h^{3/2}\int_{\R}(\gamma_{a}(\sigma)-Ca^{-2}\delta(h))|\widetilde v_{j,h,\sigma}(s,0)|^{2}ds\\
+h^{3/2}\int_{\R}(\gamma(s)-\gamma_{a}(s))|\widetilde v_{j,h,\sigma}(s,0)|^{2}ds
- \Big(\lambda h(1+C\delta(h))+ C\varepsilon^{-1}\delta(h)^{4} \Big) \norm{\widetilde v_{j,h,\sigma}}^{2}_{L^{2}(\R^{2}_{+})}\\
-   (1+C\delta(h))  \int \widetilde {\mathcal{W}}_{h}|\widetilde v_{j,h,\sigma}|^2dsdt.
\end{multline}
Let $\eta>0$ and
\begin{equation}\label{eq:new-gamma}
{\gamma}_{a,\sigma}=\dfrac{\gamma_{a}(\sigma)-Ca^{-2}\delta(h)}{(1-\varepsilon)(1-\eta)}\,.
\end{equation}
We can rewrite \eqref{N-q-f''} in the alternative form,
\begin{multline}\label{dec-eta}
\sum_{j=1}^{N}\bigg\{\mathcal{Q}_{h,\Omega}^{\alpha,\gamma}(v_{j,h}(x;\sigma))-  \int_{\Omega}(\lambda h + \mathcal{W}_{h})|v_{j,h}(x;\sigma)|^{2}dx\bigg\}\\
\geq (1-\eta)(1-\varepsilon) \sum_{j=1}^{N}\bigg[\int_{\R^{2}_{+}}|(-ih\nabla +A_{\sigma})\widetilde v_{j,h,\sigma}|^{2}dsdt\\
+h^{3/2}\int_{\R}{\gamma}_{a,\sigma}|\widetilde
v_{j,h,\sigma}(s,0)|^{2}ds- {\lambda h}  \norm{\widetilde v_{j,\sigma}}^{2}_{L^{2}(\R^{2}_{+})}\bigg]
\\+\eta(1-\varepsilon)(1-\eta_0)
R^{(1)}_{h,\eta,\sigma}(\widetilde
v_{j,h,\sigma})+\eta\eta_0(1-\varepsilon)
R^{(2)}_{h,\eta,\sigma,a}(\widetilde v_{j,h,\sigma})\,,
\end{multline}
where $\eta_0\in(0,1/2)$,
\begin{align*}
R^{(1)}_{h,\eta,\sigma}(\widetilde
v_{j,h,\sigma})=&\sum_{j=1}^{N}\bigg[\int_{\R^{2}_{+}}|(-ih\nabla
+A_{\sigma})\widetilde v_{j,h,\sigma}|^{2}dsdt-\lambda h  \norm{\widetilde v_{j,\sigma}}^{2}_{L^{2}(\R^{2}_{+})}
\\
&+\left\{\lambda h\left(1-\frac1{1-\eta_0}\right)+\frac{\lambda h}{1-\eta_0}\left(1-\frac{1}{1-\varepsilon}\right)\right.\\
&\qquad\qquad\left.-\dfrac{2\eta^{-1}\lambda
hC\delta(h)+2C\eta^{-1}\varepsilon^{-1}\delta(h)^{4}}{(1-\varepsilon)(1-\eta_0)}\right\}
\norm{\widetilde v_{j,h,\sigma}}^{2}_{L^{2}(\R^{2}_{+})} \\
&-\dfrac{2\eta^{-1}(1+C\delta(h))}{(1-\varepsilon)(1-\eta_0)} \int \widetilde
{\mathcal{W}}_{h}|\widetilde v_{j,h,\sigma}|^2dsdt  \bigg],
\end{align*}
and
\begin{multline}
 R_{h,\eta,\sigma,a}^{(2)}(\widetilde v_{j,h,\sigma})=
\sum_{j=1}^{N}\bigg[\int_{\R^{2}_{+}}|(-ih\nabla
+A_{\sigma})\widetilde v_{j,h,\sigma}|^{2}dsdt +
2\eta_0^{-1}\eta^{-1}h^{3/2}\int_{\R}
\dfrac{\gamma(s)-\gamma_{a}(s)}{1-\varepsilon}|\widetilde
v_{j,h,\sigma}(s,0)|^{2}ds\bigg].
\end{multline}
The parameter $\eta_0$ will be selected sufficiently small but
fixed. Let us define the density matrix
\[
\widetilde \Gamma f=\sum_{j=1}^{N}\langle   f, \widetilde v_{j,h,\sigma}\rangle_{L^{2}\big([0,\delta(h)]\times(0,\delta(h))\big)}  \widetilde v_{j,h,\sigma},
\]
which satisfies $0\leq \widetilde\Gamma\leq C\delta(h)^{-1}$. Denote
by $\gamma_{\rm error}=2\eta_0^{-1}\eta^{-1}
\dfrac{\gamma(s)-\gamma_{a}(s)}{1-\varepsilon}$. Thanks to the
variational principle in Lemma~\ref{lem-VP-3} and the Lieb-Thirring
inequality in \eqref{Lb-thm}, we may write,
\begin{equation}\label{Eq-FErr}
\begin{aligned} R_{h,\eta,\sigma,a}^{(2)}(\widetilde v_{j,h,\sigma})&= {\rm tr}\Big[ \mathcal{P}^{\alpha,\gamma_{\rm error}}_{h,B_{\sigma},\R^{2}_{+}}\widetilde\Gamma \Big]
 \geq -C\delta(h)^{-1}{\rm tr}\Big[  \mathcal{P}^{\alpha,\gamma_{\rm error}}_{h,B_{\sigma},\R^{2}_{+}}\Big]_{-}\\
&\geq -C B_{\sigma}^{-1/2}h\delta^{-1}(1-\varepsilon)^{-3}\eta_0^{-3}\eta^{-3}\int_{\R}|\gamma(s)-\gamma_{a}(s)|^3ds \\
&\geq  -C B_{\sigma}^{-1/2}h\delta^{-1}(1-\varepsilon)^{-3}\eta_0^{-3}\eta^{-3}\|\gamma-\gamma_a\|_3^3\,.
\end{aligned}
\end{equation}
Let us make the following choice of the parameter $\delta$ and
$\varepsilon$,
\begin{equation}\label{eq:delta-eta}
\delta=\eta^{-3/4}h^{1/2},\qquad \varepsilon=h^{1/4}\,.
\end{equation}
Integrating \eqref{Eq-FErr} with respect to
$\sigma\in(-\delta(h),|\partial\Omega|)$, we conclude that,
\begin{equation}\label{Error-energy}
\begin{aligned}
\eta_0\eta(1-\varepsilon) \int R_{h,\eta,\sigma,a}^{(2)}(\widetilde v_{j,h,\sigma})d\sigma&\geq  -C B_{\sigma}^{-1/2}h\delta^{-1}
(1-\varepsilon)^{-2}\eta_0^{-2}\eta^{-2}\|\gamma-\gamma_a\|_3^3\\
&=\mathcal{O}\big(\,\eta_0^{-5/4}\eta^{-5/4}h^{1/2}\,\big)\|\gamma-\gamma_a\|_3^3.
\end{aligned}
\end{equation}

We estimate $R_{h,\eta,\sigma}^{(1)}(\widetilde v_{j,h,\sigma})$ using the
variational principle in Lemma~\ref{lem-VP-3} and the rough bound in
the cylinder in Lemma~\ref{energy}. Indeed, we have
\begin{multline*}
\Bigg\|\dfrac{2\eta^{-1}(1+C\delta(h))\widetilde{\mathcal{W}}_{h}}{(1-\varepsilon)(1-\eta_0)}-
\lambda
h\left\{\left(1-\frac1{1-\eta_0}\right)-\frac{1}{1-\eta_0}\left(1-\frac{1}{1-\varepsilon}\right)\right\}\\
+\dfrac{2C\eta^{-1}\lambda
h\delta(h)+C\varepsilon^{-1}\delta(h)^{4}}{(1-\varepsilon)(1-\eta_0)}
\Bigg\|_{L^{\infty}}  \leq \vartheta B_{\sigma}h\,,
\end{multline*}
where $\vartheta=\mathcal O(\eta)+\mathcal O(\eta_0)+o(1)$. We may
select $\eta$ and $\eta_0$ sufficiently small such that
$\vartheta<\lambda_0$, where $\lambda_0$ is the constant in
Lemma~\ref{energy}. That way, we may apply Lemma~\ref{energy}.
First, we write by the variational principle,
\begin{equation}
R_{h,\eta,\sigma}^{(1)}(\widetilde v_{j,h,\sigma}) \geq {\rm Tr}\Big[\Big(
\mathcal{P}_{h,B_{\sigma},\R^{2}_{+}}^{\alpha,0} -
B_{\sigma}h(1+\vartheta)\Big)\widetilde\Gamma \Big] \geq -C
\delta(h)^{-1}\mathcal{E}(\vartheta,B_{\sigma},
\delta(h),\delta(h))\,.
\end{equation}
Applying Lemma \ref{energy} and integrating with respect to $\sigma\in(-\delta(h), |\partial\Omega|)$, we arrive at
\begin{equation}\label{Error-energy*}
\eta(1-\varepsilon) \int R_{h,\eta,\sigma}^{(1)}(\widetilde v_{j,h,\sigma})d\sigma\geq -C \eta\delta(h)= -C\eta^{1/4}\, h^{1/2}.
\end{equation}
Collecting the estimates in \eqref{Error-energy},
\eqref{Error-energy*} and \eqref{dec-eta}, we get,
\begin{multline}\label{eq:lb-con}
\sum_{j=1}^{N}\bigg\{\mathcal{Q}_{h,\Omega}^{\alpha,\gamma}(v_{j,h}(x;\sigma))-  \int_{\Omega}(\lambda h + \mathcal{W}_{h})|v_{j,h}(x;\sigma)|^{2}dx\bigg\}\\
\geq (1-\eta)(1-\varepsilon)\sum_{j=1}^{N}\bigg[ \int_{\R^{2}_{+}}|(-ih\nabla +A_{\sigma})\widetilde v_{j,h,\sigma}|^{2}dsdt\\
+h^{3/2}\int_{\R}{\gamma}_{a,\sigma}|\widetilde
v_{j,h,\sigma}(s,0)|^{2}ds- \lambda h  \norm{\widetilde v_{j,h,\sigma}}^{2}_{L^{2}(\R^{2}_{+})}  \bigg]
-C\Big(\,\eta^{-5/4}\|\gamma-\gamma_a\|_3^3+\eta^{1/4}\,\Big)
h^{1/2}.
\end{multline}
The constant $C$ in the remainder term depends on the fixed
parameter $\eta_0$, but independent of the other parameters.  Notice
that the choice of $\delta$ and $\varepsilon$ in
\eqref{eq:delta-eta} makes the error in \eqref{eq:lb-blk} of the
order $\mathcal O(\sqrt{\eta}\,h^{1/2})$. Thus, collecting
\eqref{eq:lb-con}, \eqref{eq:lb-blk} and \eqref{IMS-2}, we get by
the variational principle in \eqref{lem-VP-2},
\begin{equation}\label{eq:lb-con'}
\begin{aligned}
-E(\lambda;h,\gamma,\alpha)\geq& (1-\eta)(1-\varepsilon)
\sum_{j=1}^{N}\int_{\R}\left[  \mathcal{Q}_{h,B_{\sigma},\R^{2}_{+}}^{\alpha ,{\gamma}_{a,\sigma}}(\widetilde v_{j,h,\sigma})- \lambda h  \norm{\widetilde v_{j,\sigma}}^{2}_{L^{2}(\R^{2}_{+})}  \right]d\sigma\\
&-C\Big(\,\eta^{-5/4}\|\gamma-\gamma_a\|_3^3+\eta^{1/4}\,\Big)
h^{1/2}.
\end{aligned}
\end{equation}
Here $\mathcal{Q}_{h,B_{\sigma},\R^{2}_{+}}^{\alpha,{\gamma}_{a,\sigma}}$ is the quadratic form associated to the operator in \eqref{Op-hs}
\subsection{The leading order term}
Here we continue to handle the case $\alpha=\frac12$. We are going
to estimate the leading term in \eqref{eq:lb-con'}, i.e.
\[
\sum_{j=1}^{N}\int_{\R}\left[  \mathcal{Q}_{h,B_{\sigma},\R^{2}_{+}}^{\alpha, {\gamma}_{a,\sigma}}(\widetilde v_{j,h,\sigma})- \lambda h  \norm{\widetilde v_{j,h,\sigma}}^{2}_{L^{2}(\R^{2}_{+})}  \right]d\sigma.
\]
Here $\gamma_{a,\sigma}$ is the constant introduced in
\eqref{eq:new-gamma}. Let
$$\widetilde\gamma_{h,\sigma}=h^{\alpha-1/2}B_\sigma^{-1/2}\gamma_{a,\sigma}=B_\sigma^{-1/2}\gamma_{a,\sigma}\,.$$
Recall the definition of the eigenprojector
$\Pi_p(h,B_\sigma;\widetilde\gamma_{h,\sigma};\xi)$ in
\eqref{U-pro}.
 By Lemma~\ref{Egpro}, we have,
\begin{equation}
  2\pi\Sum{j=1}{N}\mathcal{Q}^{\alpha,\gamma_{a,\sigma}} _{h,B_{\sigma},\R^{2}_{+}}(\widetilde v_{j,h,\sigma})\\
  =\Sum{j=1}{N}\Sum{p=1}{\infty}\int_{\R}\mu_{p}(\widetilde\gamma_{h,\sigma}\,,\,\xi)
  \Big \langle  \Pi_{p}(h,B_\sigma;\widetilde\gamma_{h,{\sigma}},\xi) \widetilde v_{j,h,\sigma},
  \widetilde v_{j,h,\sigma}\Big\rangle d\xi\,.
\end{equation}
Thus,
\begin{multline}\label{Eq-eig}
2\pi \sum_{j=1}^{N}\Big\{
\mathcal{Q}^{\alpha,\gamma_{a,\sigma}}
_{h,B_{\sigma},\R^{2}_{+}}(\widetilde v_{j,h,\sigma})
-\lambda h \int_{\R^{2}_{+}}|\widetilde v_{j,h,\sigma}|^{2}dsdt \Big\}\\
\geq
-hB_{\sigma}\sum_{p=1}^{\infty}\int_{\R}\Big(\mu_{p}(\widetilde\gamma_{h,\sigma}\,,\,\xi)-\frac{\lambda}{B_{\sigma}}\Big)_{-}
\sum_{j=1}^{N}\Big\langle
\Pi_{p}(h,B_\sigma;\widetilde\gamma_{h,{\sigma}},\xi) \widetilde
v_{j,h,\sigma}, \widetilde v_{j,h,\sigma}\Big\rangle d\xi.
\end{multline}
From the definition of
$\Pi_{p}(h,B_\sigma;\widetilde\gamma_{h,{\sigma}}\,,\,\xi)$ and the identity \eqref{unw}, it follows that
\begin{equation}\label{Eq-OS}
\begin{aligned}
\Big\langle  &\Pi_{p}(h,B_\sigma;\widetilde\gamma_{h,{\sigma}},\xi) \widetilde v_{j,h,\sigma}, \widetilde v_{j,h,\sigma}\Big\rangle_{L^2(\R^2_{+})}\\
&=\frac{B_{\sigma}}{h}
\Big|\big\langle \widetilde v_{j,h,\sigma},e^{-is\xi (h^{-1}B_{\sigma})^{1/2} }u_{p,\widetilde\gamma_{h,{\sigma}}}((h^{-1}B_{\sigma})^{1/2}t;\xi ) \big\rangle_{L^2(\R^2_{+})}\Big|^{2}\\
&\leq (1+C\delta(h))\frac{B_{\sigma}}{h}
\Big|\Big\langle g_{j}, \zeta_{1,h}\psi_{h}(x;\sigma)
U_{\Phi}^{-1}\Big(e^{-is\xi (h^{-1}B_{\sigma})^{1/2} }u_{p,\widetilde\gamma_{h,{\sigma}}}((h^{-1}B_{\sigma})^{1/2}t;\xi )\Big) \Big\rangle_{L^{2}(\Omega)}\Big|^{2}\,,
\end{aligned}
\end{equation}
where the transformation $U^{-1}_{\Phi}:\widetilde u\mapsto u$ is
associated with the coordinate transform $\Phi_{t_{0}}$ introduced
in \eqref{BC}. Next, since $\{g_{j}\}_{j=1}^{N}$ is an orthonormal
system in $L^{2}(\Omega)$, we have
\begin{multline}\label{Eq-OS_1}
\sum_{j=1}^{N}\Big|\Big\langle g_{j}, \zeta_{1,h}\psi_{h}(x;\sigma)  U_{\Phi}^{-1}\Big(e^{-is\xi (h^{-1}B_{\sigma})^{1/2} }u_{p,\widetilde\gamma_{h,{\sigma}}}((h^{-1}B_{\sigma})^{1/2}t;\xi )\Big) \Big\rangle_{L^{2}(\Omega)}\Big|^{2}\\
\leq \norm{\zeta_{1,h}\psi_{h}(x;\sigma)  U_{\Phi}^{-1}\Big(e^{-is\xi
(h^{-1}B_{\sigma})^{1/2}
}u_{p,\widetilde\gamma_{h,{\sigma}}}((h^{-1}B_{\sigma})^{1/2}t;\xi
)\Big)}^{2}_{L^{2}(\Omega)}\,.
\end{multline}
Putting \eqref{Eq-OS} and \eqref{Eq-OS_1} together, we get
\begin{equation}\label{eq:Pi}
\begin{aligned}
0\leq
&\sum_{j=1}^{N}\Big\langle  \Pi_{p}(h,B_\sigma;\widetilde\gamma_{h,{\sigma}},\xi)\widetilde v_{j,h,\sigma}, \widetilde v_{j,h,\sigma}\Big\rangle_{L^{2}(\R^{2}_{+})} \\
&\leq  (1+C\delta(h))
\frac{B_{\sigma}}{h}\norm{\zeta_{1,h}\psi_{h}(\sigma)
U_{\Phi}^{-1}\Big(e^{-is\xi (h^{-1}B_{\sigma})^{1/2} }u_{p,\widetilde\gamma_{h,{\sigma}}}((h^{-1}B_{\sigma})^{1/2}t;\xi )\Big)}^{2}_{L^{2}(\Omega)}\\
&\leq  (1+C\delta(h))\dfrac{B_{\sigma}}{h}
\Int{\R^{2}_{+}}{} (1-tk(s))
|\psi_{h}(s;\sigma)|^{2}|\zeta_{1,h}(t)|^{2}|u_{p,\widetilde\gamma_{h,{\sigma}}}(h^{-1/2} B_{\sigma}^{1/2}t; h^{-1/2} B_{\sigma}^{1/2},\xi)|^{2}dsdt\\
&\leq (1+C\delta(h))\dfrac{B_{\sigma}}{h}
\Int{\R}{} |\psi_{h}(s;\sigma)|^{2}ds \Int{\R_{+}}{}|u_{p,\widetilde\gamma_{h,{\sigma}}}(h^{-1/2} B_{\sigma}^{1/2}t;\xi)|^{2} dt\\
&= (1+C\delta(h))h^{-1/2} B_{\sigma}^{1/2}.
\end{aligned}
\end{equation}
Inserting this into \eqref{Eq-eig}, we find
\begin{multline}\label{Eq-eig-2}
2\pi \sum_{j=1}^{N}\Big\{ \mathcal{Q}^{\alpha,\gamma_{a,\sigma}} _{h,B_{\sigma},\R^{2}_{+}}(\widetilde v_{j,h,\sigma})
-\lambda h \int_{\R^{2}_{+}}|\widetilde v_{j,h,\sigma}|^{2}dsdt \Big\}\\
\geq
-h^{1/2}B_{\sigma}^{3/2}\sum_{p=1}^{\infty}\int_{\R}\Big(\mu_{p}(\widetilde\gamma_{h,{\sigma}}\,,\,\xi)-\frac{\lambda}{B_{\sigma}}\Big)_{-}d\xi-\mathcal
O(h^{1/2}\delta(h)).
\end{multline}
Fixing $a$ and $\eta$, we have,
${\gamma}_{h,\sigma}\rightarrow
\frac{\gamma_a(\sigma)}{1-\eta}$ as $h\rightarrow 0$.  It results
from Lemma~\ref{lim-int} that, if $h\to0$, then,
\begin{equation}
\sum_{p=1}^{\infty}\int_{\R}\Big(\mu_{p}(\widetilde\gamma_{h,{\sigma}}\,,\,\xi)-\frac{\lambda}{B_{\sigma}}\Big)_{-}d\xi\to
\sum_{p=1}^{\infty}\int_{\R}\Big(\mu_{p}\left(B_{\sigma}^{-1/2}\frac{\gamma_a(\sigma)}{1-\eta} ,\xi\right)
-\frac{\lambda}{B_{\sigma}}\Big)_{-}d\xi.
\end{equation}
Since the  function $\gamma_a$ is smooth and bounded (for every
fixed $a$), then by dominated convergence,
$$\int_{0}^{|\partial\Omega|}
\sum_{p=1}^{\infty}\int_{\R}\Big(\mu_{p}(\widetilde\gamma_{h,{\sigma}}\,,\,\xi)-\frac{\lambda}{B_{\sigma}}\Big)_{-}d\xi\,d\sigma
\to \int_{0}^{|\partial\Omega|}
\sum_{p=1}^{\infty}\int_{\R}\Big(\mu_{p}\left(B_{\sigma}^{-1/2}\frac{\gamma_a(\sigma)}{1-\eta} ,\xi\right)
-\frac{\lambda}{B_{\sigma}}\Big)_{-}d\xi d\sigma.$$
Inserting this and \eqref{Eq-eig-2} into \eqref{eq:lb-con'}, we get,
\begin{equation}\label{Eq-eig-3}
\begin{aligned}
\liminf_{h\to0}&\Big(-2\pi h^{-1/2}
E(\lambda;h,\gamma,\alpha)\Big)\\
&\geq
(1-\eta)\sum_{p=1}^{\infty}\int_{\partial\Omega}\int_{\R}B(x)^{3/2}\Big(\mu_{p}\left(B(x)^{-1/2}\frac{\gamma_a(x)}{1-\eta},\xi\right)-\frac{\lambda}{B(x)}\Big)_{-}d\xi
ds(x)\\
&\quad-C\Big(\,\eta^{-5/4}\|\gamma-\gamma_a\|_3^2+\eta^{1/4}\,\Big).
\end{aligned}
\end{equation}
Taking successively $\liminf_{a\to0_+}$ then
$\liminf_{\eta\to0_+}$, we arrive at,
\begin{multline}\label{Eq-eig-4}
\liminf_{h\to0}\Big(-2\pi h^{-1/2}
E(\lambda;h,\gamma,\alpha)\Big)\geq\\
\liminf_{\eta\to0_+}\left\{\liminf_{a\to0_+}
\sum_{p=1}^{\infty}\int_{\partial\Omega}\int_{\R}B(x)^{3/2}\Big(\mu_{p}\left(B(x)^{-1/2}\frac{\gamma_a(x)}{1-\eta},\xi\right)-\frac{\lambda}{B(x)}\Big)_{-}d\xi
ds(x)\right\}\,.
\end{multline}
If $\gamma\in L^\infty(\partial\Omega)$, then
$\|\gamma_a\|_\infty\leq \|\gamma\|_\infty$ and by dominated
convergence, the right side in \eqref{Eq-eig-4} is
$$\sum_{p=1}^{\infty}\int_{\partial\Omega}\int_{\R}B(x)^{3/2}\Big(\mu_{p}\left(B(x)^{-1/2}\gamma(x),\xi\right)-\frac{\lambda}{B(x)}\Big)_{-}d\xi
ds(x)\,.$$ Therefore, when $\gamma\in L^\infty(\partial\Omega)$ and $\alpha=\frac12$, we have the lower bound,
\begin{equation}\label{lb-thm2}
\liminf_{h\to0}\Big(-2\pi h^{-1/2}
E(\lambda;h,\gamma,\alpha)\Big)\geq\sum_{p=1}^{\infty}\int_{\partial\Omega}\int_{\R}B(x)^{3/2}\Big(\mu_{p}\left(B(x)^{-1/2}\gamma(x),\xi\right)-\frac{\lambda}{B(x)}\Big)_{-}d\xi
ds(x)\,.
\end{equation}

\section{Upper bound}

Let $\sigma\in[0,|\partial\Omega|)$, $\phi=\phi_{\sigma}$ be the
gauge from Proposition~\ref{prop:gauge}, $\zeta_{1,h}$ and
$\psi_{h}$ the functions from \eqref{Def-zetakh} and
\eqref{Def-psih} respectively. Let furthermore $\Phi=\Phi_{t_0}$ be
the coordinate transformation near the boundary given in \eqref{BC},
$B_{\sigma}=B(\Phi(\sigma,0))$ and $\breve{\gamma}_{h,\sigma}$  the
number introduced below in \eqref{def:gamma-tilde}.

Let $\xi\in \R$. If $\alpha=1/2$, we define the function
\begin{align*}
\widetilde{f}_{p,1/2}((s,t);h,\sigma,\xi)
:=B_{\sigma}^{1/4}h^{-1/4}
e^{-i\xi s\sqrt{B_{\sigma}/h}}
u_{p,\breve{\gamma}_{h,\sigma}}\big( B_{\sigma}^{1/2}h^{-1/2}t;\xi\big)e^{-i\phi_{\sigma}/h}\psi_{h}(s;\sigma)\zeta_{1,h}(t)\,,
\end{align*}
where $u_{p,\breve\gamma_{h,\sigma}}(\cdot;\xi)$ is the function from \eqref{Egv-HO}, and if $\alpha>1/2$, we define
\begin{align*}
\widetilde{f}_{p,\alpha}((s,t);h,\sigma,\xi):=B_{\sigma}^{1/4}h^{-1/4}e^{-i\xi s\sqrt{B_{\sigma}/h}}u_{p,0}\big( B_{\sigma}^{1/2}h^{-1/2}t;\xi\big)e^{-i\phi_{\sigma}/h}\psi_{h}(s;\sigma)\zeta_{1,h}(t)\,.
\end{align*}
Recall the coordinate transformation $\Phi$ valid near a
neighborhood of the point $x$ (see Subsection~\ref{Sec:BC}), and let
$x=\Phi^{-1}(y)$. We define $f_{p}(x;h,\sigma,\xi):={ \widetilde
f_{p}}((s,t);h,\sigma,\xi)$ by means of \eqref{tilde}. Let $K>0$. If
$\alpha=1/2$, we set,
\begin{equation}\label{eq:M}
M_{1/2}(h,\sigma,\xi,p,K)={{\bf 1}}_{\{(\sigma,\xi,p)\in[0,|\partial\Omega|)\mathbb\times\R\times\N~:~ \frac{\lambda}{B_{\sigma}}-\mu_{p}(\breve\gamma_{h,\sigma},\xi)\geq 0,~|\xi|\leq K\}},
\end{equation}
and if $\alpha>1/2$,
\begin{equation}\label{eq:M;alpha}
M_{\alpha}(h,\sigma,\xi,p,K)={{\bf 1}}_{\{(\sigma,\xi,p)\in[0,|\partial\Omega|)\mathbb\times\R\times\N~:~ p=1,\quad\frac{\lambda}{B_{\sigma}}-\mu_{1}(0,\xi)\geq 0,~|\xi|\leq K\}}\,.
\end{equation}
Since the calculations that we  perform will be done in the regimes
$\alpha=1/2$ and $\alpha>1/2$ independently, then, for the sake of
simplification, we will drop the subscript $\alpha$ in the
calculations below  and write $M$, $f_{p}$ instead of $M_{\alpha}$
and $f_{p,\alpha}$.

Let $f\in L^{2}(\Omega)$. We introduce
\begin{equation}\label{eq:Gamma}
(\Gamma f )(x)=(2\pi)^{-1}h^{-1/2}\iint B_{\sigma}^{1/2}\sum_{p=1}^{\infty}M(h,\sigma,\xi,p,K)\langle f_{p}(\cdot; h,\sigma,\xi), f \rangle f_{p}(x;h,\sigma,\xi)d\sigma d\xi\,.
\end{equation}
In Lemma~\ref{Pro-dm} below, we will prove that $\Gamma$ satisfies
the density matrix condition, namely,\break$0\leq \Gamma\leq
1+o(1)$. By the variational principle in Lemma~\ref{lem-VP-3}, an
upper bound of the sum of eigenvalues of
$\mathcal{P}_{h,\Omega}^{\alpha,\gamma}$ below $\lambda h$ follows
if we can prove an upper bound on
\begin{multline}\label{Eq:deftr}
{\rm tr}\big[(\mathcal{P}_{h,\Omega}^{\alpha,\gamma}-\lambda h){\Gamma}\big]\\
=(2\pi)^{-1}h^{-1/2}\iint\sum_{p=1}^{\infty}B_{\sigma}^{1/2}M(h,\sigma,\xi,p,K)\Big(\mathcal{Q}_{h,\Omega}^{\alpha,\gamma}(
f_{p}(x;h,\sigma,\xi))-\lambda h\norm{
f_{p}(x;h,\sigma,\xi)}^{2}\Big)d\sigma d\xi\,,
\end{multline}
where $\mathcal{Q}_{h,\Omega}^{\alpha,\gamma}$ is the quadratic form introduced in \eqref{QF-Gen}.
We will then estimate the quantity in \eqref{Eq:deftr} in the cases
$\alpha=1/2$ and $\alpha>1/2$ independently.
\subsection*{The regime $\alpha>1/2$}
In this subsection, we suppose that $\alpha>1/2$. We see in
\eqref{eq:M;alpha} that the definition of $M$ involves the first
eigenvalue $\mu_1(\cdot,\cdot)$ only. Consequently, the summation in
the definition of $\Gamma$ is restricted to the first term
corresponding to $p=1$. We observe that
\begin{equation}\label{Eq:QFalpha>1/2}
\mathcal{Q}_{h,\Omega}^{\alpha,\gamma}(f_{1}(x;h,\sigma,\xi))
=\mathcal{Q}_{h,\Omega}^{\alpha,0}(f_{1}(x;h,\sigma,\xi))+h^{1+\alpha}\int_{\R}\gamma(s)|\widetilde f_{1}((s,0);h,\sigma,\xi)|^2 ds.\\
\end{equation}
Easy computations lead to
\[
\int_{\R}\gamma(s)|\widetilde f_{1}((s,0);h,\sigma,\xi)|^2 ds\leq B_{\sigma}^{1/2}h^{-1/2}|u_{1,0}(0,\xi)|^{2}\int_{\R} (\gamma(s))_{+}|\psi_{h}(s;\sigma)|^{2}ds\,.
\]
Inserting this into \eqref{Eq:QFalpha>1/2}, we obtain
 \begin{multline}
\mathcal{Q}_{h,\Omega}^{\alpha,\gamma}(f_{1}(x;h,\sigma,\xi))
\leq \mathcal{Q}_{h,\Omega}^{\alpha,0}(f_{1}(x;h,\sigma,\xi))+B_{\sigma}^{1/2}h^{\alpha+1/2}|u_{1,0}(0,\xi)|^{2}\int_{\R} (\gamma(s))_{+}|\psi_{h}(s;\sigma)|^{2}ds.\\
\end{multline}
Now, we compute,
\begin{equation}\label{Eq:qf>1/2}
\begin{aligned}
    &{\rm tr}[(\mathcal{P}_{h,\Omega}^{\alpha,\gamma}-\lambda h)\Gamma]\\
    &\quad
    =  \iint (2\pi)^{-1}B_{\sigma}^{1/2}h^{-1/2}M(h,\sigma,\xi,p=1,K)\Big\{\mathcal{Q}_{h,\Omega}^{\alpha,\gamma}(f_{1}(x;h,\sigma,\xi))
-\lambda h\norm{f_{1}(x;h,\sigma,\xi)}^{2} \Big\}{d\sigma d\xi}\\
    &\quad\leq \Int{-K}{K}\Int{0}{|\partial\Omega|} (2\pi)^{-1}B_{\sigma}^{1/2}h^{-1/2}
    \Big\{\mathcal{Q}_{h,\Omega}^{\alpha,0}(f_{1}(x;h,\sigma,\xi)) -\lambda h \norm{f_{1}(x;h,\sigma,\xi)}^{2}\\
    &\qquad\qquad\qquad+     h^{\alpha+1/2}B_{\sigma}^{1/2}|u_{1,0}\big(0;\xi\big)|^2 \int_{\R}(\gamma(s))_{+}|\psi_{h}(s;\sigma)|^2ds  \Big\}{d\sigma d\xi}\\
    &\quad\leq
    \Int{-K}{K}\Int{0}{|\partial\Omega|} (2\pi)^{-1}B_{\sigma}^{1/2}h^{-1/2}\Big\{\mathcal{Q}_{h,\Omega}^{\alpha,0}(f_{1}(x;h,\sigma,\xi))
     -\lambda h \norm{f_{1}(x;h,\sigma,\xi)}^{2} \Big\}{d\sigma d\xi}\\
&\qquad\qquad\qquad    +    (2\pi)^{-1} h^{\alpha}\norm{B}_{L^{\infty}(\partial\Omega)}\int_{-K}^{K}|u_{1,0}\big(0;\xi\big)|^2 \int_{\R}\int_{0}^{|\partial\Omega|}(\gamma(s))_{+}|\psi_{h}(s;\sigma)|^2ds  {d\sigma d\xi}\\
\end{aligned}
\end{equation}
Using that
$\int_{0}^{|\partial\Omega|}\psi_{h}^{2}(s;\sigma)d\sigma=1$ and
taking into account the regularity of the function
$\xi\mapsto|u_{1,0}(0,\xi)|^2$, the second term on the right-hand
side of \eqref{Eq:qf>1/2} is estimated from above by
\[
(2\pi)^{-1} h^{\alpha}2K\norm{B}_{L^{\infty}(\partial\Omega)}\sup_{\xi\in[-K,K]}|u_{1,0}\big(0;\xi\big)|^2 \norm{\gamma}_{L^1(\partial\Omega)}\,,
\]
which is $o(h^{1/2})$ for fixed $K$. Also, by
\cite[Proof of (5.37)]{Fo-Ka}, the first term on the right-hand side of \eqref{Eq:qf>1/2} is bounded from above by,
\begin{equation*}
-\frac{h^{1/2}}{2\pi}\int_{\partial\Omega}\int_{-K}^{K}B(x)^{3/2}\Big(\mu_{1}(0,\xi)-\frac{\lambda}{B(x)}\Big)_{-}d\xi
ds(x)-h^{1/2}o(1)\,.
\end{equation*}
Thus, taking the successive limits $\limsup_{h\rightarrow 0^{+}}$
and $\lim_{K\rightarrow\infty}$, we obtain,
\begin{equation}\label{lb-thm0}
\limsup_{h\rightarrow 0}\Big(-h^{-1/2}E(\lambda;h,\gamma,1/2)\Big)\leq
-\frac{1}{2\pi}\int_{\partial\Omega}\int_{\R}B(x)^{3/2}\Big(\mu_{1}(0,\xi)-\frac{\lambda}{B(x)}\Big)_{-}d\xi
ds(x)\,,
\end{equation}
which gives the desired upper bound when $\alpha>1/2$.
\subsection*{The regime $\alpha=1/2$}
In this section, we restrict to the harder case $\alpha=1/2$. Here,
the definition of $M=M_{1/2}$ in \eqref{eq:M} involves the quantity,
\begin{equation}\label{def:gamma-tilde}
\breve{\gamma}_{h,\sigma}= \dfrac{B_{\sigma}^{-1/2}( \gamma_{a}(\sigma)+C a^{-2}\delta(h))}{1+\varepsilon}\,.
\end{equation}
In the definition of $\breve{\gamma}_{h,\sigma}$, $a\in(0,1)$ and
$\varepsilon\in(0,1)$ are fixed parameters, and  $\gamma_{a}$ is the
function introduced in \eqref{gamma:a}. Recall that, as $a\to0_+$,
$\gamma_a\to\gamma$ in $L^3(\partial\Omega)$.

We start by computing, for all $p\geq 1$,
\begin{equation}\label{norm:fj}
\begin{aligned}
\int_{\Omega}|f_{p}(x;h,\sigma,\xi)|^2dx
&=\int_0^{\infty}\int_0^{|\partial\Omega|}|\widetilde{f}_{p}((s,t);h,\sigma,\xi)|^2(1-tk(s))dsdt\\
&\quad\leq (1+\delta(h)\norm{k}_{\infty}) B_{\sigma}^{1/2}h^{-1/2}\\
&\qquad\times
\int_0^{\delta(h)}\int_0^{|\partial\Omega|}|u_{p,\breve\gamma_{h,\sigma}}(B_{\sigma}^{1/2}h^{-1/2}t;\xi)|^2|\zeta_{1,h}(t)|^2|\psi_{h}(s;\sigma)|^2dsdt\\
&\quad\leq (1+\delta(h)\norm{k}_{\infty}) B_{\sigma}^{1/2}h^{-1/2}
\int_0^{\delta(h)}|u_{p,\breve\gamma_{h,\sigma}}(B_{\sigma}^{1/2}h^{-1/2}t;\xi)|^2dt\\
&\quad= (1+\delta(h)\norm{k}_{\infty}),
\end{aligned}
\end{equation}
where we have used that the functions $\psi_{h}$ and
$u_{p,\breve\gamma_{h,\sigma}}$ are normalized in $s$ and $t$
respectively.
Again the normalization of $\psi_{h}$ implies that
\begin{equation}\label{norm:fj:0}
\begin{aligned}
\int_{\R}|\widetilde{f}_{p}((s,0);h,\sigma,\xi)|^2ds&\quad
= B_{\sigma}^{1/2}h^{-1/2}|u_{p,\breve\gamma_{h,\sigma}}(0;\xi)|^2|\zeta_{1,h}(0)|^2
\int_0^{|\partial\Omega|}|\psi_{h}(s,\sigma)|^2ds\\
&\quad\leq  B_{\sigma}^{1/2}h^{-1/2}|u_{p,\breve\gamma_{h,\sigma}}(0;\xi)|^2
\int_0^{|\partial\Omega|}|\psi_{h}(s;\sigma)|^2ds\\
&\quad= B_{\sigma}^{1/2}h^{-1/2}|u_{p,\breve\gamma_{h,\sigma}}(0;\xi)|^2.
\end{aligned}
\end{equation}
We also compute
\begin{equation}\label{norm-fk-lb-0}
\begin{aligned}
\int_{\Omega}|f_{p}(x;h,\sigma,\xi)|^2dx
&=\iint|\widetilde{f}_{p}((s,t);h,\sigma,\xi)|^2(1-tk(s))dsdt\\
&\geq (1-\delta(h)\norm{k}_{\infty}) B_{\sigma}^{1/2}h^{-1/2}\times\\
&\int_{0}^{\delta(h)}\int_{0}^{|\partial\Omega|}|u_{p,\breve{\gamma}_{h,\sigma}}(B_{\sigma}^{1/2}h^{-1/2}t;\xi)|^2|\zeta_{1,h}(t)|^2\left|\psi_{h}(s;\sigma)\right|^2dsdt.\\
&= (1-\delta(h)\norm{k}_{\infty})B_{\sigma}^{1/2}h^{-1/2}\int_{\R_{+}}|u_{p,\breve\gamma_{h,\sigma}}(B_{\sigma}^{1/2}h^{-1/2}t;\xi)|^2|\zeta_{1,h}(t)|^2dt.\\
\end{aligned}
\end{equation}
Let us write the last integral as
\begin{multline}
\int_{\R_{+}}|u_{p,\breve\gamma_{h,\sigma}}(B_{\sigma}^{1/2}h^{-1/2}t;\xi)|^2|\zeta_{1,h}(t)|^2dt
=\int_{\R_{+}}|u_{p,\breve\gamma_{h,\sigma}}(B_{\sigma}^{1/2}h^{-1/2}t;\xi)|^2dt\\
+\int_{\R_{+}}|u_{p,\breve\gamma_{h,\sigma}}(B_{\sigma}^{1/2}h^{-1/2}t;\xi)|^2(|\zeta_{1,h}(t)|^2-1)dt\\
=B_{\sigma}^{-1/2}h^{1/2}+ \int_{t\geq
\delta(h)/2}|u_{p,\breve\gamma_{h,\sigma}}(B_{\sigma}^{1/2}h^{-1/2}t;\xi)|^2(|\zeta_{1,h}(t)|^2-1)dt.
\end{multline}
Taking into account the support of $\zeta_{1,h}$, we can write,
\begin{multline}\label{Ag-est}
\int_{\R_{+}}|u_{p,\breve\gamma_{h,\sigma}}(B_{\sigma}^{1/2}h^{-1/2}t;\xi)|^2|\zeta_{1,h}(t)|^2dt\geq B_{\sigma}^{1/2}h^{-1/2}
- \int_{t\geq \delta(h)/2}|u_{p,\breve\gamma_{h,\sigma}}(B_{\sigma}^{1/2}h^{-1/2}t;\xi)|^2dt\\
= B_{\sigma}^{-1/2}h^{1/2}- \int_{t\geq
\delta(h)/2}e^{-\epsilon(B_{\sigma}^{1/2}h^{-1/2}t-\xi)^2/2}e^{\epsilon(B_{\sigma}^{1/2}h^{-1/2}t-\xi)^2/2}
|u_{p,\breve\gamma_{h,\sigma}}(B_{\sigma}^{1/2}h^{-1/2}t;\xi)|^2dt\,.
\end{multline}
 In observance of  the support of $M=M_{1/2}$,  we
see that $|\xi|\leq K$ and
\[
(B_{\sigma}^{1/2}h^{-1/2}t-\xi)^{2}\geq \big(b^{1/2}h^{-1/2}\frac{\delta(h)}{2}-\xi\big)^{2}\geq \frac{1}{8} bh^{-1}\delta(h)^{2}-2K^2.
\]
Implementing this into \eqref{Ag-est} and using the exponential
decay given in \eqref{Decay}, we find that
\begin{multline}\label{Ag-est-1}
\int_{\R_{+}}|u_{p,\breve\gamma_{h,\sigma}}(B_{\sigma}^{1/2}h^{-1/2}t;\xi)|^2|\zeta_{1,h}(t)|^2dt\\
\geq  B_{\sigma}^{1/2}h^{-1/2}\Big(1-C_{\epsilon,K}e^{-\epsilon(\frac{1}{8} bh^{-1}\delta(h)^{2}-2K^2)/2}(1+a^{-2}\delta(h)+(a^{-2}\delta(h))^2)\Big).
\end{multline}
In the last step we have used that $\gamma\in L^{\infty}$ together with the definition of $\breve\gamma_{h,\sigma}$ in \eqref{def:gamma-tilde}.

Inserting this into \eqref{norm-fk-lb}, we finally obtain
\begin{equation}\label{norm-fk-lb}
\int_{\Omega}|f_{p}(x;h,\sigma,\xi)|^2dx\geq
 (1-\delta(h)\norm{k}_{\infty})(1-C_{\epsilon,K} e^{-\epsilon(\frac{1}{8} bh^{-1}\delta(h)^{2}-2K^2)/2}(1+a^{-2}\delta(h)+(a^{-2}\delta(h))^2).
\end{equation}
Next we estimate the quadratic form. By Lemma~\ref{Lem-apqf},
we have for all $\varepsilon>0$,
\begin{equation}\label{eq:ub-e}
\begin{aligned}
&\mathcal{Q}_{h,\Omega}^{\alpha,\gamma}(f_{p}(x;h,\sigma,\xi))\\
&\quad
=\int_{\Omega}|(-ih\nabla+A)f_{p}(x;h,\sigma,\xi)|^2dx+h^{3/2}\int_{\partial\Omega}\gamma(x)|f_{p}(x;h,\sigma,\xi)|^2 dx\\
&\quad\leq (1+\varepsilon)\int_{\R^2_{+}}|(-ih\nabla+A_{\sigma})e^{i\phi_{\sigma}/h}\widetilde{f}_{p}((s,t);h,\sigma,\xi)|^2 dsdt\\
&\qquad+ C \varepsilon^{-1}\delta(h)^4\int_{\R^2_{+}}|\widetilde f_{p}((s,t);h,\sigma,\xi)|^2 dsdt +h^{3/2}\int_{\R}\gamma(s)|\widetilde f_{p}((s,0);h,\sigma,\xi)|^2 ds\,.\\
\end{aligned}
\end{equation}
Writing $\gamma=\gamma_{a}+(\gamma-\gamma_{a})$, it follows that
\begin{equation}\label{QF-1}
\begin{aligned}
&\mathcal{Q}_{h,\Omega}^{\alpha,\gamma}(f_{p}(x;h,\sigma,\xi))\\
&\quad \leq (1+\varepsilon)B_{\sigma}^{1/2}h^{-1/2}\int \psi_{h}^2(s,\sigma)
\big| (-ih \nabla+A_{\sigma})e^{-i\xi s\sqrt{B_{\sigma}/h}}u_{p,\breve\gamma_{h,\sigma}}\big( B_{\sigma}^{1/2}h^{-1/2}t;\xi\big)\big|^2 dsdt\\
&\qquad+\Big((1+\varepsilon)Ch^2\delta(h)^{-2}+C \varepsilon^{-1}\delta(h)^4\Big) \int_{\R^2_{+}}|\widetilde f_{p}((s,t);h,\sigma,\xi)|^2 dsdt\\
&\qquad+h^{3/2}\int_{\partial\Omega}\gamma_{a}(s)|\widetilde f_{p}((s,0);h,\sigma,\xi)|^2 ds\\
&\qquad+hB_{\sigma}^{1/2}|u_{p,\breve\gamma_{h,\sigma}}\big(0;\xi\big)|^2 |\zeta_{1,h}(0)|^2\int_{\R}(\gamma(s)-\gamma_{a}(s))|\psi_{h}(s;\sigma)|^2ds,
\end{aligned}
\end{equation}
where $A_{\sigma}$ is defined in \eqref{Asigma}.  Plugging
\eqref{norm:fj} and \eqref{norm:fj:0} into \eqref{QF-1}, and using
\eqref{gamma-gamma:a}, we find
\begin{equation}\label{Eqt:Qfj}
\begin{aligned}
&\mathcal{Q}_{h,\Omega}^{\alpha,\gamma}(f_{p}(x;h,\sigma,\xi))\\
&\quad \leq
(1+\varepsilon)\Big\{B_{\sigma}^{1/2}h^{-1/2}\int_{\R^{2}_{+}}
\Big| (-ih \nabla+A_{\sigma})e^{-i\xi s\sqrt{B_{\sigma}/h}}u_{p,\breve\gamma_{h,\sigma}}\big( B_{\sigma}^{1/2}h^{-1/2}t;\xi\big)\Big|^2\psi_{h}^2(s,\sigma) dsdt  \\
&\qquad
+hB_{\sigma}\breve\gamma_{h,\sigma}|u_{p,\breve\gamma_{h,\sigma}}\big( 0;\xi\big)|^2  \Big\}\\
&\qquad+{(1+\norm{k}_{\infty}\delta(h))\Big((1+\varepsilon)Ch^2\delta(h)^{-2}+C \varepsilon^{-1}\delta(h)^4\Big) } \\
&\qquad+     hB_{\sigma}^{1/2}\big|u_{p,\breve\gamma_{h,\sigma}}\big(0;\xi\big)\big|^2 \int_{\R}(\gamma(s)-\gamma_{a}(s))_{+}|\psi_{h}(s;\sigma)|^2ds     \\
&\qquad \leq (1+\varepsilon)hB_{\sigma}\mu_{p}(\breve\gamma_{h,\sigma},\xi)
+{(1+\norm{k}_{\infty}\delta(h))\Big((1+\varepsilon)Ch^2\delta(h)^{-2}+C \varepsilon^{-1}\delta(h)^4\Big) }\\
&\qquad+     hB_{\sigma}^{1/2}|u_{p,\breve\gamma_{h,\sigma}}\big(0;\xi\big)|^2 \int_{\R}(\gamma(s)-\gamma_{a}(s))_{+}|\psi_{h}(s;\sigma)|^2ds \Big\}\,.
\end{aligned}
\end{equation}
Using Lemma~\ref{lem-lim-mu-j} and the fact that $\gamma\in
L^{\infty}$, we infer that the number of indices $p$ appearing in
the support of $M(h,\sigma,\xi,p,K)$ is finite. More precisely,
there exists a constant $p_0\in\mathbb N$ such that, for all
$\sigma>0$, $a\in(0,1)$ and $K>0$, the function $M$ in \eqref{eq:M}
vanishes for all $p> p_0$.

Now, we collect \eqref{Eq:deftr}, \eqref{norm:fj:0},
\eqref{norm-fk-lb} and \eqref{Eqt:Qfj} to obtain
\begin{equation}\label{Eq:tr:alpha=1/2;}
\begin{aligned}
{\rm tr}[(\mathcal{P}_{h,\Omega}^{\alpha,\gamma}-\lambda h)]
&\leq (2\pi)^{-1}h^{-1/2}\iint B_{\sigma}^{1/2}\sum_{p=1}^{\infty} \Big\{\mathcal{Q}_{h,\Omega}^{\alpha,\gamma}(f_{p}(x;h,\sigma,\xi))-\lambda h\norm{f_{p}(x;h,\sigma,\xi)}^2 \Big\}d\xi d\sigma\\
&\leq \Sum{p=1}{p_0}\iint (2\pi)^{-1}B_{\sigma}^{1/2}h^{-1/2}M(h,\sigma,\xi,p,K)\Big\{(1+\varepsilon)hB_{\sigma}\mu_{p}(\breve\gamma_{h,\sigma},\xi)hB_{\sigma}\\
&-\lambda h (1-\norm{k}_{\infty}\delta(h))\big(1-C_{\epsilon,K} e^{-\epsilon(\frac{1}{8} bh^{-1}\delta(h)^{2}-2K^2)/2}(1+a^{-2}\delta(h)+(a^{-2}\delta(h))^2)\big)\\
&\qquad\qquad    +(1+\norm{k}_{\infty}\delta(h))\big((1+\varepsilon)C{h^{2}}{\delta(h)^{-2}}+C\varepsilon^{-1}\delta(h)^{4}\big)\\
&\qquad\qquad    +     hB_{\sigma}^{1/2}|u_{p,\breve\gamma_{h,\sigma}}\big(0;\xi\big)|^2 \int_{\R}(\gamma(s)-\gamma_{a}(s))_{+}|\psi_{h}(s;\sigma)|^2ds  \Big\}{d\sigma d\xi}\,.
\end{aligned}
\end{equation}
We may arrange the terms in \eqref{Eq:tr:alpha=1/2;} to obtain,
\begin{equation}\label{Eq:tr:alpha=1/2}
\begin{aligned}
&{\rm tr}[(\mathcal{P}_{h,\Omega}^{\alpha,\gamma}-\lambda h)]
\quad\leq -\Sum{p=1}{p_0}
\Int{-K}{K}\Int{0}{|\partial\Omega|} (2\pi)^{-1}B_{\sigma}^{1/2}h^{-1/2}\Big(\mu_{p}(\breve\gamma_{h,\sigma},\xi)hB_{\sigma}-\lambda h\Big)_{-}    {d\sigma d\xi}\\
&\qquad + R_1+R_2+R_3\,,
\end{aligned}
\end{equation}
where
\begin{multline}
R_1=
  p_{0}\,h|\partial\Omega|\,2K{\norm{B}}^{3/2}_{L^{\infty}(\partial\Omega)}(2\pi)^{-1}h^{-1/2}\times\\
  \Big(\varepsilon+\norm{k}_{\infty}\delta(h)+ C_{\epsilon,K} e^{-\epsilon(\frac{1}{8} bh^{-1}\delta(h)^{2}-2K^2)/2}(1+a^{-2}\delta(h)+(a^{-2}\delta(h))^2)\\-\norm{k}_{\infty}\delta(h) C_{\epsilon,K} e^{-\epsilon(\frac{1}{8} bh^{-1}\delta(h)^{2}-2K^2)/2}(1+a^{-2}\delta(h)+(a^{-2}\delta(h))^2)\Big)\,,\label{eq:R1}
  \end{multline}
\begin{equation}
R_2=p_{0}\,|\partial\Omega|\,2K{\norm{B}}^{1/2}_{L^{\infty}(\partial\Omega)}(2\pi)^{-1}h^{-1/2}(1+\norm{k}_{\infty}\delta(h))
  \Big((1+\varepsilon){h^{2}}{\delta(h)^{-2}}+C\varepsilon^{-1}\delta(h)^{4} \Big)
    \,,\label{eq:R2}
    \end{equation}
    and
\begin{equation}\label{eq:R3}
R_3=
 (2\pi)^{-1} h^{1/2}{\norm{B}}_{L^{\infty}(\partial\Omega)}\sum_{p=1}^{p_0}
 \int_{-K}^{K}\int_{0}^{|\partial\Omega|}\int_{\R}(\gamma(s)-\gamma_{a}(s))_{+}|\psi_{h}(s;\sigma)|^2|u_{p,\breve\gamma_{h,\sigma}}(0;\xi)|^2 ds d\sigma d\xi\,.
\end{equation}
Choosing $\delta=h^{3/8}$ and $\varepsilon=h^{1/4}$, we see that,
for fixed $a$ and $K$,
\begin{equation}\label{eq:R1+R2}
R_1+R_2=o(h^{1/2})\,,
\end{equation} and
$$|R_3|\leq (2\pi)^{-1}  2K h^{1/2}
{\norm{B}}_{L^{\infty}(\partial\Omega)}\norm{\gamma-\gamma_{a}}_{L^{1}(\partial\Omega)}\sum_{p=1}^{p_0}\sup_{\xi\in[-K,K]}|u_{p,\breve\gamma_{h,\sigma}}(0;\xi)|^2\,.$$
The term $|u_{p,\breve\gamma_{h,\sigma}}(0;\xi)|^2$ is controlled by
the estimate in Lemma~\ref{Lem:|u0|^2}. Taking into account the
condition of the support of $M=M_{1/2}$ in \eqref{eq:M}, we observe
that,
$$|u_{p,\breve\gamma_{h,\sigma}}(0;\xi)|^2\leq C\Big(\mu_p(\breve\gamma_{h,\sigma};\xi)+\breve\gamma_{h,\sigma}^2 +1 \Big)\leq C(2+\breve\gamma_{h,\sigma}^2).$$
It follows from the definition of  $\breve\gamma_{h,\sigma}$ in
\eqref{def:gamma-tilde} that, when $h$ and $\sigma$ vary and $K$,
$a$ and $p$ remain fixed,
$$\sup_{\xi\in[-K,K]}|u_{p,\breve\gamma_{h,\sigma}}(0;\xi)|^2\leq C\left(1+(a^{-2}\delta(h))^2\right)\,,$$
thereby giving us that,
\begin{equation}\label{eq:R3'}
|R_3|\leq
2CKh^{1/2}\left(1+(a^{-2}\delta(h))^2\right)\|\gamma-\gamma_a\|_{L^1(\partial\Omega)}\,,
\end{equation}
as long as $a$ and $K$ remain fixed.

Now, we insert \eqref{eq:R1+R2} and \eqref{eq:R3'} into
\eqref{Eq:tr:alpha=1/2}. Thanks to Lemma~\ref{Pro-dm} below, we may
apply the variational principle in Lemma~\ref{lem-VP-3}. That way,
we infer from \eqref{Eq:tr:alpha=1/2},
\begin{equation}\label{Eq:tr:alpha=1/2}
\begin{aligned}
 &-E(\lambda;h,\gamma,1/2)
\leq  (1+\norm{k}_{\infty}\delta(h))^{-1} {\rm tr}[(\mathcal{P}_{h,\Omega}^{\alpha,\gamma}-\lambda h)]
 \\
 &\qquad\leq - (1+\norm{k}_{\infty}\delta(h))^{-1}\Sum{p=1}{p_0}\Int{-K}{K}\Int{0}{|\partial\Omega|} (2\pi)^{-1}B_{\sigma}^{3/2}h^{1/2}\Big(\mu_{p}(\breve\gamma_{h,\sigma},\xi)-\frac{\lambda}{B_{\sigma}} \Big)_{-}    {d\sigma d\xi}\\
&\qquad\qquad    +o(h^{1/2})+2CKh^{1/2}\left(1+(a^{-2}\delta(h))^2\right)
    \norm{\gamma-\gamma_{a}}_{L^{1}(\partial\Omega)}\,.
\end{aligned}
\end{equation}
 Since ${\breve\gamma}_{h,\sigma}\rightarrow B_{\sigma}^{-1/2}
{\gamma_a(\sigma)}$ as $h\rightarrow 0$, and
${\breve\gamma}_{h,\sigma}$ remains bounded for a fixed $a$, then it
results from Lemma~\ref{lim-int} and dominated convergence (as
$h\to0_+$),
\begin{equation}
\sum_{p=1}^{p_0}\int_{\R}\Big(\mu_{p}(\breve\gamma_{h,{\sigma}}\,,\,\xi)-\frac{\lambda}{B_{\sigma}}\Big)_{-}d\xi\to
\sum_{p=1}^{p_0}\int_{\R}\Big(\mu_{p}\big(B_{\sigma}^{-1/2}{\gamma_a(\sigma)},\xi\big)
-\frac{\lambda}{B_{\sigma}}\Big)_{-}d\xi.
\end{equation}
Since the  function $\gamma_a$ is smooth and bounded (for every
fixed $a$), then by dominated convergence,
$$\int_{0}^{|\partial\Omega|}
\sum_{p=1}^{p_0}\int_{\R}\Big(\mu_{p}(\breve\gamma_{h,{\sigma}}\,,\,\xi)-\frac{\lambda}{B_{\sigma}}\Big)_{-}d\xi\,d\sigma
\to \int_{0}^{|\partial\Omega|}
\sum_{j=1}^{p_0}\int_{\R}\Big(\mu_{p}\big(B_{\sigma}^{-1/2}{\gamma_a(\sigma)}
,\xi\big) -\frac{\lambda}{B_{\sigma}}\Big)_{-}d\xi d\sigma.$$ Taking
$\limsup_{h\to 0}$ on both sides in \eqref{Eq:tr:alpha=1/2}, it
follows that,
\begin{multline*}
    \limsup_{h\to 0}\Big(-h^{-1/2}E(\lambda;h,\gamma,1/2)\Big)\\
    \leq -\Sum{p=1}{p_0}\Int{-K}{K}\int_{\partial\Omega} (2\pi)^{-1}B(x)^{3/2}\Big(\mu_{p}(B(x)^{-1/2}{ \gamma_{a}(x)},\xi)-\dfrac{\lambda}{B(x)} \Big)_{-}    { d\xi ds(x)} \,.
\end{multline*}
Now, we take the successive limits, $\limsup_{a\rightarrow 0_{+}}$
and $\lim_{K\rightarrow\infty}$ to obtain,
\begin{multline}\label{Eq:ub}
    \limsup_{h\to 0}\Big(-h^{-1/2}E(\lambda;h,\gamma,1/2)\Big) \\
    \leq -\lim_{K\rightarrow\infty}\Bigg\{\liminf_{a\rightarrow 0_{+}}\frac{1}{2\pi}\Sum{p=1}{p_0}\Int{-K}{K}\int_{\partial\Omega} B(x)^{3/2}\Big(\mu_{p}(B(x)^{-1/2}{ \gamma_{a}(x)},\xi)-\frac{\lambda}{B(x)}\Big)_{-}    {d\xi ds(x)} \Bigg\}\,.
\end{multline}
Since $\gamma\in L^{\infty}(\Omega)$, then $\|\gamma_{a}\|_{\infty}\leq \|\gamma\|_{\infty}$ and by dominated convergence, the right-hand side in \eqref{Eq:ub} is
\[
-\dfrac{1}{2\pi}\Sum{p=1}{p_0}\Int{-\infty}{\infty}\int_{\partial\Omega}B(x)^{3/2}\left(\mu_{p}(B(x)^{-1/2}{ \gamma(x)},\xi)-\frac{\lambda}{B(x)} \right)_{-}    { d\xi ds(x)}\,.
\]
This finishes the proof of the upper bound in Theorem~\ref{thm:KN}.

It remains to verify that  the density matrix $\Gamma$ satisfies the
necessary properties to apply the variational principle in
Lemma~\ref{lem-VP-3}. That is contained in

\begin{lem}\label{Pro-dm}
There exists a constant $C>0$ such that,
\begin{equation}\label{DM-cond}
\forall~f\in L^2(\Omega)\,,\quad
0\leq \langle \Gamma f, f\rangle_{L^{2}(\Omega)} \leq (1+\norm{k}_{\infty}\delta(h))\norm{f}^{2}_{L^{2}(\Omega)},
\end{equation}
where $\Gamma$ is as in \eqref{eq:Gamma}.
\end{lem}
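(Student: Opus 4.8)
The plan is as follows. The lower bound $\langle \Gamma f,f\rangle_{L^{2}(\Omega)}\geq0$ is immediate: since $M(h,\sigma,\xi,p,K)\in\{0,1\}$, \eqref{eq:Gamma} exhibits $\langle\Gamma f,f\rangle_{L^{2}(\Omega)}$ as
\[
(2\pi)^{-1}h^{-1/2}\iint B_{\sigma}^{1/2}\sum_{p=1}^{\infty}M(h,\sigma,\xi,p,K)\,\big|\langle f_{p}(\cdot;h,\sigma,\xi),f\rangle_{L^{2}(\Omega)}\big|^{2}\,d\sigma\,d\xi ,
\]
a sum of nonnegative terms. Using now $0\leq M\leq1$, the upper bound in \eqref{DM-cond} will follow once we show
\[
(2\pi)^{-1}h^{-1/2}\iint B_{\sigma}^{1/2}\sum_{p=1}^{\infty}\big|\langle f_{p}(\cdot;h,\sigma,\xi),f\rangle_{L^{2}(\Omega)}\big|^{2}\,d\sigma\,d\xi\leq\big(1+\norm{k}_{\infty}\delta(h)\big)\norm{f}_{L^{2}(\Omega)}^{2}.
\]

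Next I would pass to the boundary coordinates \eqref{BC}. Each $f_{p}(\cdot;h,\sigma,\xi)$ is supported in the boundary layer $\mathcal{V}_{t_{0}}$ (through the factor $\psi_{h}(\cdot;\sigma)\zeta_{1,h}$), so only $f|_{\mathcal{V}_{t_{0}}}$ enters; writing $\widetilde{f}=f\circ\Phi$ and using the volume element \eqref{unw}, one obtains $\langle f_{p}(\cdot;h,\sigma,\xi),f\rangle_{L^{2}(\Omega)}=\langle w_{p,\sigma,\xi},\Psi_{\sigma}\rangle_{L^{2}(\R^{2}_{+})}$, where $w_{p,\sigma,\xi}(s,t):=B_{\sigma}^{1/4}h^{-1/4}e^{-i\xi s\sqrt{B_{\sigma}/h}}\,u_{p,\breve{\gamma}_{h,\sigma}}\big(B_{\sigma}^{1/2}h^{-1/2}t;\xi\big)$ is the ``model'' factor of $\widetilde{f}_{p}$ (see \eqref{Egv-HO} and \eqref{def:gamma-tilde}) and $\Psi_{\sigma}:=e^{i\phi_{\sigma}/h}\psi_{h}(\cdot;\sigma)\zeta_{1,h}\,(1-tk(s))\,\widetilde{f}\in L^{2}(\R^{2}_{+})$, the gauge factor $e^{i\phi_{\sigma}/h}$ of Proposition~\ref{prop:gauge} having modulus $1$. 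The key input is the identity underlying \eqref{Eq-OS}: the model functions and the rescaled eigenprojectors $\Pi_{p}(h,B_{\sigma};\breve{\gamma}_{h,\sigma},\xi)$ of \eqref{U-pro} satisfy $\big|\langle u,w_{p,\sigma,\xi}\rangle_{L^{2}(\R^{2}_{+})}\big|^{2}=B_{\sigma}^{-1/2}h^{1/2}\langle\Pi_{p}(h,B_{\sigma};\breve{\gamma}_{h,\sigma},\xi)u,u\rangle_{L^{2}(\R^{2}_{+})}$ for all $u\in L^{2}(\R^{2}_{+})$; combined with the resolution of identity in Lemma~\ref{Egpro}, this gives, for each fixed $\sigma$,
\[
(2\pi)^{-1}h^{-1/2}B_{\sigma}^{1/2}\int_{\R}\sum_{p=1}^{\infty}\big|\langle w_{p,\sigma,\xi},\Psi_{\sigma}\rangle\big|^{2}d\xi=(2\pi)^{-1}\int_{\R}\sum_{p=1}^{\infty}\langle\Pi_{p}(h,B_{\sigma};\breve{\gamma}_{h,\sigma},\xi)\Psi_{\sigma},\Psi_{\sigma}\rangle\,d\xi=\norm{\Psi_{\sigma}}_{L^{2}(\R^{2}_{+})}^{2}.
\]

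Finally I would integrate in $\sigma$ and use elementary bounds. Since $\norm{\Psi_{\sigma}}_{L^{2}(\R^{2}_{+})}^{2}=\int|\psi_{h}(s;\sigma)|^{2}|\zeta_{1,h}(t)|^{2}(1-tk(s))^{2}|\widetilde{f}(s,t)|^{2}\,ds\,dt$ and $\int|\psi_{h}(s;\sigma)|^{2}\,d\sigma=1$, Fubini gives $\int\norm{\Psi_{\sigma}}^{2}\,d\sigma=\int|\zeta_{1,h}(t)|^{2}(1-tk(s))^{2}|\widetilde{f}(s,t)|^{2}\,ds\,dt$. On the support of $\zeta_{1,h}$ one has $0\leq t\leq\delta(h)$, hence $0<1-tk(s)\leq1+\norm{k}_{\infty}\delta(h)$, and $|\zeta_{1,h}|^{2}\leq1$; therefore $(1-tk(s))^{2}|\zeta_{1,h}(t)|^{2}\leq\big(1+\norm{k}_{\infty}\delta(h)\big)(1-tk(s))$ there, and the right-hand side above is at most $\big(1+\norm{k}_{\infty}\delta(h)\big)\int(1-tk(s))|\widetilde{f}(s,t)|^{2}\,ds\,dt=\big(1+\norm{k}_{\infty}\delta(h)\big)\norm{f}_{L^{2}(\mathcal{V}_{t_{0}})}^{2}\leq\big(1+\norm{k}_{\infty}\delta(h)\big)\norm{f}_{L^{2}(\Omega)}^{2}$. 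Chaining the displays yields \eqref{DM-cond}. In the regime $\alpha>1/2$ the same argument works verbatim with $\breve{\gamma}_{h,\sigma}$ replaced by $0$ and only $p=1$ present. I do not anticipate a genuine obstacle: the single delicate point is to track the rescaling constant $B_{\sigma}^{1/4}h^{-1/4}$ so that Lemma~\ref{Egpro} applies with the correct normalization; everything else reduces to $0\leq M\leq1$, $|\zeta_{1,h}|\leq1$, the normalization $\int|\psi_{h}(\cdot;\sigma)|^{2}\,d\sigma=1$, and the Jacobian estimate $|1-tk(s)|\leq1+\norm{k}_{\infty}\delta(h)$ valid in the boundary layer.
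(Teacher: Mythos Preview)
Your proof is correct and follows essentially the same route as the paper's. The only presentational difference is that you invoke the resolution of the identity from Lemma~\ref{Egpro} to carry out the $(\xi,p)$-summation, whereas the paper unpacks this step directly: it uses that $\{u_{p,\breve\gamma_{h,\sigma}}(\cdot;\xi)\}_{p}$ is an orthonormal basis of $L^{2}(\R_{+})$ to sum over $p$, and then Plancherel in the $s$-variable to integrate over $\xi$. Since Lemma~\ref{Egpro} is itself proved by exactly that Fourier/completeness argument, the two arguments are equivalent; your $\Psi_{\sigma}$ is the complex conjugate of the paper's auxiliary function $G(s,t)=\overline{\widetilde f(s,t)}\psi_{h}(s;\sigma)\zeta_{1,h}(t)e^{-i\phi_{\sigma}/h}(1-tk(s))$, and the final $\sigma$-integration and Jacobian bound are identical.
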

\begin{proof}
Let $f\in L^{2}(\Omega)$. Due to the support of $\Gamma$ (in particular $\zeta_{1,h}$), we may suppose that ${\rm supp}~{f}\subset\{ x\in\overline{\Omega}~:~{\rm dist}(x,\partial\Omega)\leq \delta(h)\}.$
We compute,
\begin{multline}\label{eq1-g-gamma-g}
    \langle f, \Gamma f\rangle_{L^{2}(\Omega)}\\
    =({2\pi})^{-1}h^{-1/2}\Sum{p=1}{\infty}\iint M(h,\sigma,\xi,p,K)B_{\sigma}^{1/2}\left|\iint \overline{\widetilde f(s,t)}{\widetilde f}_{p}((s,t); h,\sigma,\xi)(1-tk(s)) ds dt    \right|^{2}{d\sigma
    d\xi}\,.
\end{multline}
We estimate from above by replacing $\iint M\times|\cdot|^{2}$ by $\iint1\times|\cdot|^{2}$ in the above expression.
Defining
\[
G(s,t)=\overline{\widetilde f(s,t)}\psi_{h}(s;\sigma)\zeta_{1,h}(t)e^{-i\phi_{\sigma}/h}(1-tk(s)).
\]
Using Cauchy-Schwarz inequality and the fact that
$u_{p,\breve\gamma_{h,\sigma}}(\cdot,\xi)$ in the case $\alpha=1/2$
(or $u_{p,0}(\cdot,\xi)$ in the case $\alpha>1/2$) is an orthonormal
basis of $L^{2}(\R_{+})$ for all $\xi$, we get,
\begin{equation*}
\sum_{p=1}^{\infty}\left|\iint \overline{\widetilde f(s,t)}\widetilde f_{p}(s,t;h,\sigma,\xi)(1-tk(s))dsdt\, \right|^{2}\leq 2\pi\Int{t>0}{}\left|(\mathcal{F}_{s\rightarrow\xi}G)({B_{\sigma}^{1/2}h^{-1/2}}\xi)\right|^{2}dt\,.
\end{equation*}
Here, $\mathcal{F}_{s\rightarrow\xi}$ denotes the Fourier transform with respect to the variable $s$.

Integrating with respect to $\xi$ and using Plancherel identity, we
find that,
\begin{align*}
&2\pi\Int{\R}{}\Int{t\in \R_{+}}{}\left|(\mathcal{F}_{s\rightarrow\xi}G)(B_{\sigma}^{1/2}h^{-1/2}\xi)\right|^{2}dtd\xi=
 2\pi h^{1/2}B_{\sigma}^{-1/2}\Int{\R^{2}_{+}}{}|G(s,t)|^{2}dsdt.
\end{align*}
Consequently,
\begin{equation}\label{eq2-g-gamma-g}
    \langle f, \Gamma f\rangle_{L^2(\Omega)}\leq \Int{0}{|\partial\Omega|}\int_{\R^{2}_{+}}|\widetilde f(s,t)|^{2}(1-tk(s))^{2}\psi_{h}^{2}(s;\sigma)\zeta_{1,h}^{2}(t)dsdtd\sigma.
\end{equation}
We do the $\sigma$-integration first. The normalization of $\psi_{h}$ implies that the result is
\begin{align*}
    \langle f, \Gamma f\rangle_{L^{2}(\Omega)}&\leq \Int{\R^{2}_{+}}{}|\widetilde f(s,t)|^{2}(1-tk(s))^{2}\zeta_{1,h}^{2}(t)dsdt\\
    &\leq (1+\delta(h)\norm{k}_{\infty})\Int{\Omega}{}|f(x)|^{2}dx.
\end{align*}
This finishes the proof of \eqref{DM-cond}.
\end{proof}
\section{Proof of Corollary~\ref{cor:KN}}\label{Sec:7}
We will prove the second assertion in \eqref{SA}. The first
assertion in \eqref{FA} can be proven similarly. Define
\begin{equation}\label{Def:fp}
f_{p}(x,\lambda):=\int_{\R}B(x)^{3/2}\Big(\mu_{p}\left(B(x)^{-1/2}\gamma(x),
\xi\right)-\frac{\lambda}{B(x)}\Big)_{-}d\xi\,.
\end{equation}
We start by computing the left- and right- derivatives of the
function $ \lambda\to f_{p}(x,\lambda)$.  We thus find
\begin{equation}\label{right-der}
\dfrac{\partial f_{p}}{\partial\lambda_{+}}(x,\lambda)=\int_{
\{\xi\in\R~:~B(x)\mu_p\left(B(x)^{-1/2}\gamma(x),\xi\right)\leq\lambda\}}
B(x)^{1/2}d\xi\,,
\end{equation}
and
\begin{equation}
\dfrac{\partial f_{p}}{\partial\lambda_{-}}(x,\lambda)=\int_{
\{\xi\in\R~:~B(x)\mu_p\left(B(x)^{-1/2}\gamma(x),\xi\right)<\lambda\}}
B(x)^{1/2}d\xi.
\end{equation}

In view of Lemma~\ref{lem:Ka}, the equation
\[
\mu_{p}\big(B(x)^{-1/2}\gamma(x),\xi\big)={\lambda}B(x)^{-1},
\]
has exactly two solutions
\[
\xi_{p,\pm}:=\xi_{p,\pm}(\gamma^{\prime}_{x},\lambda^{\prime}_{x});\quad \gamma^{\prime}_{x}:=B(x)^{-1/2}\gamma(x)\quad{\rm and}\quad \lambda^{\prime}_{x}:=\lambda B(x)^{-1}.
\]
Since the set $\{\xi: \xi=\xi_{p,\pm}\}$ has measure zero with respect to the $\xi$ integration, it follows that the left- and right- derivatives coincide and we can write
\begin{equation}\label{Def:derfp}
\dfrac{\partial f_{p}}{\partial\lambda_{\pm}}(x,\lambda)=\int_{
\{\xi\in\R~:~B(x)\mu_p\left(B(x)^{-1/2}\gamma(x),\xi\right)\leq\lambda\}}
B(x)^{1/2}d\xi.
\end{equation}
Let $\varepsilon>0$. By the variational principle in
Lemma~\ref{lem-VP-3}, we have
\begin{equation}{\label{First-eq}}
E(\lambda+\varepsilon;h,\gamma,\alpha) -E(\lambda;h,\gamma,\alpha) \geq \varepsilon h {N}(\lambda;h,\gamma,\alpha).
\end{equation}
On the other hand, it follows from Theorem~\ref{thm:KN} that
\begin{equation}\label{F:thKN}
E(\lambda;h,\gamma,\alpha)
=\frac{h^{1/2}}{2\pi}\sum_{p=1}^{\infty}\int_{\partial\Omega}f_{p}(x,\lambda)ds(x)+h^{1/2}o(1).
\end{equation}
In light of Lemma~\ref{lem-lim-mu-j}, the sum on the right hand side of \eqref{F:thKN} is actually a sum of a finite number of terms. Thus $\sum_{p=1}^{\infty}$ can be replaced by $\sum_{p=1}^{p_0}$.
Implementing \eqref{F:thKN} into \eqref{First-eq}, then taking $\limsup_{h\rightarrow 0_{+}}$, we get
\begin{equation}
 \limsup_{h\rightarrow 0_{+}}h ^{1/2} {N}(\lambda;h,\gamma,\alpha)\leq\dfrac{1}{2\pi} \sum_{p=1}^{p_0}\int_{\partial\Omega}\dfrac{f_{p}(x,\lambda+\varepsilon)-f_{p}(x,\lambda)}{\varepsilon} d\sigma(x).
\end{equation}
Taking the limit $\varepsilon\rightarrow 0_{+}$ and using dominated
convergence, we deduce that
\begin{equation}\label{u-b-n}
\limsup_{h\rightarrow 0_{+}}  h^{1/2}{N}(\lambda;h,\gamma,\alpha)\leq \dfrac{1}{2\pi} \sum_{p=1}^{p_0}\int_{\partial\Omega}\dfrac{\partial f_p}{\partial\lambda_+}(x,\lambda)d\sigma(x).
\end{equation}
Replacing $\varepsilon$ by $-\varepsilon$ in \eqref{First-eq} and following the same arguments that led to \eqref{u-b-n}, we find
 \begin{equation}\label{l-b-n}
\liminf_{h\rightarrow 0_{+}}  h^{1/2}{N}(\lambda;h,\gamma,\alpha)\geq \dfrac{1}{2\pi} \sum_{p=1}^{p_0}\int_{\partial\Omega}\dfrac{\partial f_p}{\partial\lambda_-}(x,\lambda)d\sigma(x).
\end{equation}
By combining \eqref{u-b-n} and \eqref{l-b-n}, we obtain
\begin{equation}\label{F-eq-nb}
\lim_{h\rightarrow 0_{+}}  h^{1/2}{N}(\lambda;h,\gamma,\alpha)= \dfrac{1}{2\pi} \sum_{p=1}^{p_0}\int_{\partial\Omega}\dfrac{\partial f_p}{\partial\lambda_\pm}(x,\lambda)d\sigma(x)\,.
\end{equation}
Now, in light of \eqref{Def:derfp}, we finally get that
\begin{multline}\label{F-eq-nb}
\lim_{h\rightarrow 0_{+}}  h^{1/2}{N}(\lambda;h,\gamma,\alpha)
=\frac{1}{2\pi}\sum_{p=1}^\infty
\iint_{
\{(x,\xi)\in\partial\Omega\times\R~:~B(x)\mu_p\left(B(x)^{-1/2}\gamma(x),\xi\right)<\lambda\}}
B(x)^{1/2}d\xi ds(x)\,.
\end{multline}
This finishes the proof of \eqref{SA}.
\section{Proof of Theorem~\ref{thm:SQ}}

We will apply a simple scaling argument to pass from the
semi-classical to the large area limit. Let $T$ be a positive number
and $\Omega_T=(0,T)\,\times\,(0,T)$. Define the operator,
$$P_{\Omega_T}=-(\nabla-i\Ab_0)^2\quad {\rm in}~L^2(\Omega_T)\,.$$
Functions  in the domain of $P_{\Omega_T}$ satisfy Neumann condition
$\nu\cdot(\nabla-i\Ab_0)u=0$ on the smooth parts of the boundary of
$\Omega_T$. We assume that the vector field $\Ab_0$ is given by
\begin{equation}\label{eq:vf}
\Ab_0(x_1,x_2)=(-x_2,0)\,,\quad\Big((x_1,x_2)\in\mathbb R^2\Big)\,.
\end{equation}
The operator $P_{\Omega_T}$ has compact resolvent and its spectrum
consists of an increasing sequence of eigenvalues $(e_j)_{j\geq1}$
converging to $\infty$. Note that the terms of the sequence $(e_j)$
are listed counting   multiplicities. Given $\lambda\geq 0$, the
number of eigenvalues below $1+\lambda$ is finite. Denote by
\begin{equation}\label{nb-torus}
\mathcal N(\lambda,T)={\rm Card}\,\{j~:~e_j\leq 1+\lambda\}\,.
\end{equation}

By a scaling argument, Theorem~\ref{thm:SQ} follows from:

\begin{thm}\label{thm:TDL}
There exists a positive number $\delta$ such that,
$$\limsup_{T\to\infty}\frac{\mathcal
N(\lambda,T)}{T^2}=\frac1{2\pi}\,,\quad(\lambda\in[0,\delta])\,.$$
\end{thm}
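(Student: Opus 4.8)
The strategy is to sandwich $\mathcal N(\lambda,T)$ between two counting functions for model operators on which the spectrum can be read off explicitly, and then let $T\to\infty$. The natural model is the operator $\mathcal{P}_{1,1,S,T}$ with magnetic periodic boundary conditions of the type appearing in Lemma~\ref{energy}, whose first eigenvalue and its multiplicity are known (this is the point emphasized in the Remark following Theorem~\ref{thm:SQ}). So the first step is: partition $\Omega_T=(0,T)^2$ into a thin boundary layer $\mathcal V_{T,\rho}$ of width $\rho=\rho(T)$ (with $1\ll\rho\ll T$) and a bulk square $(0,T)^2\setminus\mathcal V_{T,\rho}$, and use the IMS localization formula together with the min-max principle to compare $P_{\Omega_T}$ from below with a direct sum of a bulk operator and a boundary-layer operator. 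For the \emph{upper bound} on $\mathcal N(\lambda,T)$ one uses Dirichlet bracketing (decoupling along the cuts), and for the eigenvalue count below $1+\lambda$ one needs that eigenfunctions of $P_{\Omega_T}$ at such low energy are either genuinely concentrated near $\partial\Omega_T$ (contributing to a boundary term of lower order in $T$, i.e. $O(T)=o(T^2)$) or are ``bulk'' states which, after localization away from the boundary and a gauge transformation, look like eigenstates of the Landau Hamiltonian $(-i\nabla+\Ab_0)^2$ on a large torus.

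\textbf{Step: reduction to the torus.} Having isolated the bulk, replace the bulk square (with Dirichlet conditions on the artificial cuts, for the upper bound; with periodic conditions, for a matching lower bound obtained via an enlargement argument) by the magnetic Schrödinger operator $(-i\nabla+\Ab_0)^2$ on the torus $\R^2/(T\Z)^2$ — this is legitimate only if the flux through the torus is an integer, so one should first shrink/enlarge $T$ to the nearest admissible value, which changes $T^2$ by $O(T)=o(T^2)$ and hence does not affect the limsup. On the torus the spectrum is $\{2k+1:k\ge 0\}$ with each eigenvalue of multiplicity exactly $T^2/(2\pi)$ (the flux). Therefore the number of torus eigenvalues $\le 1+\lambda$ equals $T^2/(2\pi)$ as long as $\lambda<2$, in particular for $\lambda\in[0,\delta]$ with $\delta$ small. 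This is where the constant $\frac1{2\pi}$ and the restriction to a small interval $[0,\delta]$ come from.

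\textbf{Step: control of the boundary layer.} The boundary-layer contribution must be shown to be $o(T^2)$. Flattening the boundary near each edge and using the coordinates of Section~\ref{Sec:BC} (here with zero curvature since the square has straight edges, corners handled separately as a negligible $O(1)$ set), one reduces the layer operator, on each edge, to an operator of the form $(-ih\nabla+b\Ab_0)^2$ on a strip $[0,T]\times(0,\rho)$ with Neumann condition at $t=0$ and Dirichlet at $t=\rho$. By separation of variables and Lemma~\ref{mu2} (that $\mu_2(\xi)>1$ and $\Theta_0=\inf_\xi\mu_1(\xi)<1$ only for the first band), the number of eigenvalues of this strip operator below $1+\lambda$, for $\lambda$ small, is at most $C\,T\cdot(\text{length of }\{\xi:\mu_1(\xi)\le 1+\lambda\})=O(T)$; summing over the four edges and the four corners still gives $O(T)=o(T^2)$. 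Combining the upper estimate (Dirichlet decoupling $\Rightarrow$ $\mathcal N(\lambda,T)\le$ torus count $+$ layer count $=T^2/(2\pi)+o(T^2)$) with a matching lower estimate (Neumann/periodic decoupling in the enlarged square, plus the IMS error which is $O(T^2\rho^{-2})=o(T^2)$ for $\rho\to\infty$) yields $\mathcal N(\lambda,T)=T^2/(2\pi)+o(T^2)$, hence the claimed limit.

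\textbf{Main obstacle.} The delicate point is not the bulk (that is essentially exact after the flux-quantization adjustment) but rather \emph{the two-sided control near the boundary and corners}: one must ensure that low-energy states do not accumulate along $\partial\Omega_T$ in numbers comparable to $T^2$. The clean way is to show $\Theta_0<1$ strictly and $\mu_2(\xi)>1$ for all $\xi$ (Lemmas~\ref{mu2} and Section~3), so the boundary supports only a one-band family of states with a bounded $\xi$-window, whence an $O(T)$ count; the corners, where the model is a quarter-plane magnetic operator, contribute only finitely many eigenvalues below $1+\delta$ for $\delta$ small and so are harmless. Making the IMS localization error $o(T^2)$ forces the layer width to satisfy $\rho\to\infty$, while keeping the boundary count $o(T^2)$ only needs $\rho=o(T)$; any choice such as $\rho=\sqrt T$ works. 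Once these pieces are in place, letting $T\to\infty$ gives $\limsup_{T\to\infty}\mathcal N(\lambda,T)/T^2=\frac1{2\pi}$ for all $\lambda\in[0,\delta]$, which is the assertion; by the scaling $x\mapsto h^{-1/2}x$, $b\mapsto 1$, this is exactly Theorem~\ref{thm:SQ}.
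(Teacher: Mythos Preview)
Your upper-bound strategy is essentially the paper's: an IMS partition of $\Omega_T$ into a bulk square, four edge strips and four corner pieces, with the bulk controlled by the Dirichlet operator (whose counting function below $3$ is at most $R^2/(2\pi)$, Lemma~\ref{lem-CdV}), the edges by the strip operator of Lemma~\ref{roughestimate'}, and the corners by the sector operator of Lemma~\ref{lem:sec}. Two small differences: the paper keeps the layer width $L$ \emph{fixed} (the IMS error $C/L^2$ is absorbed into the energy threshold rather than the count, which is why a small $\delta$ is needed), and the corner bound the paper invokes is $O(L^2)$, not $O(1)$---your claim that the quarter-plane operator has only finitely many eigenvalues below $1+\delta$ is not established in the paper and is not needed.

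The real discrepancy is the lower bound. Your proposed ``Neumann/periodic decoupling in the enlarged square'' goes the wrong way: decoupling with Neumann (or periodic) conditions on the artificial cuts \emph{enlarges} the form domain, lowers eigenvalues, and therefore gives an \emph{upper} bound on $\mathcal N$, not a lower one. The paper's lower bound is far simpler and avoids any decomposition: since the magnetic-periodic form domain $E_R$ of Section~\ref{sec:per} is contained in the Neumann form domain $H^1(\Omega_R)$, the min-max principle gives directly $\mathcal N(\lambda,T)\ge N_{\rm per}(\lambda,T)$. Along the subsequence $T_n=\sqrt{2\pi n}$ (so that the flux is an integer) Lemma~\ref{lem:per} gives $N_{\rm per}(\lambda,T_n)=T_n^2/(2\pi)$ for $\lambda\in[0,2)$, hence $\limsup_T \mathcal N(\lambda,T)/T^2\ge 1/(2\pi)$. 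Note that this only yields the $\limsup$ statement, which is all that Theorem~\ref{thm:TDL} asserts; your sketch claims the full limit $\mathcal N(\lambda,T)=T^2/(2\pi)+o(T^2)$, but the lower bound you outline does not deliver it.
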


\subsection{Preliminaries}\label{sec:p}

\subsubsection{Variational min-max principle}\label{sec:vp}

We shall need the following version of the variational min-max
principle.

\begin{thm}\label{thm:vp}
Let $A$ be a self-adjoint operator in a Hilbert space $H$. Suppose
that $A$ is semi-bounded (i.e. bounded from below) and has compact
resolvent. The terms of the sequence of eigenvalues of $A$ counting
multiplicities are given by,
$$\mu_n=\inf\Big\{\max_{\substack{\phi\in M\\\|\phi\|_H=1}}\langle
A\phi,\phi\rangle_H~:~M\subset D(A)\,,~{\rm dim}\,M=n\Big\}\,.$$
\end{thm}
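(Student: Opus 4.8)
The plan is to deduce the statement from the spectral decomposition of $A$. First I would record that, since $A$ is self-adjoint, bounded from below and has compact resolvent, applying the spectral theorem to the compact self-adjoint operator $(A-c)^{-1}$ (with $c<\inf\sigma(A)$) yields an orthonormal basis $(e_k)_{k\geq1}$ of $H$ with $Ae_k=\mu_ke_k$ and $\mu_1\leq\mu_2\leq\cdots\to+\infty$, where $\mu_n$ is the quantity in the statement. I would also note that $\phi=\sum_kc_ke_k$ belongs to $D(A)$ precisely when $\sum_k|\mu_k|^2|c_k|^2<\infty$, in which case $A\phi=\sum_k\mu_kc_ke_k$ and hence $\langle A\phi,\phi\rangle_H=\sum_k\mu_k|c_k|^2$; this identity is used repeatedly below.

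For the ``$\leq$'' inequality I would test with $M_0=\mathrm{span}\{e_1,\dots,e_n\}$, which lies in $D(A)$: for any unit vector $\phi=\sum_{k=1}^nc_ke_k\in M_0$ the identity above gives $\langle A\phi,\phi\rangle_H=\sum_{k=1}^n\mu_k|c_k|^2\leq\mu_n$, so the value attached to the admissible subspace $M_0$ is at most $\mu_n$, and a fortiori the infimum in the statement is $\leq\mu_n$.

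For the reverse inequality, let $M\subset D(A)$ be any $n$-dimensional subspace. I would consider the orthogonal projection $P$ onto $\mathrm{span}\{e_1,\dots,e_{n-1}\}$: its restriction to $M$ is a linear map from an $n$-dimensional space into an $(n-1)$-dimensional space, hence has a nonzero kernel, and any unit vector $\phi$ in this kernel satisfies $\phi\in D(A)$ and $\phi\perp e_1,\dots,e_{n-1}$, i.e. $\phi=\sum_{k\geq n}c_ke_k$ with $\sum_{k\geq n}|c_k|^2=1$, whence $\langle A\phi,\phi\rangle_H=\sum_{k\geq n}\mu_k|c_k|^2\geq\mu_n$. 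Since $M$ is finite-dimensional, the $H$-norm and the graph norm of $A$ are equivalent on $M$, so $\psi\mapsto\langle A\psi,\psi\rangle_H$ is continuous on the (compact) unit sphere of $M$ and the maximum in the statement is genuinely attained; in particular it is $\geq\langle A\phi,\phi\rangle_H\geq\mu_n$. Taking the infimum over all such $M$ gives the lower bound, and combining with the previous step proves the formula.

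The step I expect to require the most care is the lower bound: in infinite dimensions there is no dimension-count shortcut of the form $\dim M+\dim(\cdots)>\dim H$, so the test vector $\phi$ must be produced via the kernel of $P|_M$, and one has to be mindful that the quadratic form $\psi\mapsto\langle A\psi,\psi\rangle_H$ is only defined on $D(A)$ — which is exactly why the competing subspaces $M$ are required to be contained in $D(A)$ rather than merely in the form domain.
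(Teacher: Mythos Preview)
Your proof is correct and is the standard argument for the Courant--Fischer min-max principle. Note, however, that the paper does not actually prove this theorem: it is stated without proof as a well-known tool in the preliminaries to Section~8, so there is no ``paper's own proof'' to compare against. Your argument supplies exactly the classical justification one would cite here.
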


\subsubsection{Rough  bound for the operator
$P_{\Omega_T}$}\label{sec:torus}

Let $S$ and $T$ be positive numbers, and
$\Omega_{S,T}=(0,S)\times(0,T)$. Consider the operator,
$$P_{\Omega_{S,T}}=-(\nabla-i\Ab_0)^2\quad{\rm
in}~L^2(\Omega_{S,T})\,.$$ A function $u(x_1,x_2)$ in the domain of
$P_{\Omega_{S,T}}$ satisfies Neumann condition at $x_2=0$, Dirichlet
condition at $x_2=T$, and periodic conditions at $x_1\in\{0,S\}$.
Define,
$$\mathcal N(\lambda;S,T)={\rm tr}\Big(\mathbf
1_{(-\infty,(1+\lambda)bh]}(P_{h,b,\Omega_{S,T}})\Big)\,.$$

Along the proof of Lemma~3.1 in \cite{Fo-Ka}, a useful rough bound on
$\mathcal N(\lambda;S,T)$ is given. We recall this bound below.

\begin{lem}\label{roughestimate'}
There exist positive constants $C$, $T_0$ and $\lambda_0$ such that,
for all $T\geq T_0$, $\lambda\in[0,\lambda_0]$ and $S>0$, we have,
\begin{equation}\label{eq-cyl:nb}
\mathcal N(\lambda;S,T)\leq  CST \,.
\end{equation}
\end{lem}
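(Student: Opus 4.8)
The plan is to diagonalise $P_{\Omega_{S,T}}$ by separation of variables, keep only its lowest spectral band, and then count lattice points. Since the vector field $\Ab_0$ in \eqref{eq:vf} does not depend on $x_1$, the operator $P_{\Omega_{S,T}}=(-i\partial_{x_1}+x_2)^2-\partial_{x_2}^2$ commutes with translations in the $x_1$-variable, and the $S$-periodicity lets us expand any function in the Fourier basis $\{S^{-1/2}e^{2\pi i n x_1/S}\}_{n\in\mathbb Z}$. This gives the direct-sum decomposition
\[
P_{\Omega_{S,T}}=\bigoplus_{n\in\mathbb Z}\Big(-\partial_{x_2}^2+(x_2-\xi_n)^2\Big),\qquad \xi_n=-\tfrac{2\pi n}{S},
\]
where each one-dimensional summand acts in $L^2((0,T))$ with Neumann condition at $x_2=0$ and Dirichlet condition at $x_2=T$. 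Writing $\big(\mu_j(\xi;T)\big)_{j\ge1}$ for the increasing sequence of eigenvalues of $-\partial_t^2+(t-\xi)^2$ on $(0,T)$ with those two boundary conditions, we obtain
\[
\mathcal N(\lambda;S,T)=\#\big\{(n,j)\in\mathbb Z\times\mathbb N~:~\mu_j(\xi_n;T)\le 1+\lambda\big\}.
\]

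Next I would show that only the band $j=1$ matters when $\lambda$ is small. Extending a function on $(0,T)$ that vanishes at $x_2=T$ by zero to $\R_+$ defines a linear isometry of the quadratic-form domain of the interval operator into that of the half-line Neumann operator $\mathfrak h[0,\xi]$ which preserves the quadratic form; the min-max principle then gives $\mu_j(\xi;T)\ge\mu_j(0,\xi)$ for all $j$, $\xi$ and $T$. By Lemma~\ref{mu2} one has $\mu_2(0,\xi)>1$ for every $\xi\in\R$; combined with $\mu_2(0,\xi)\to\infty$ as $\xi\to-\infty$, with $\mu_2(0,\xi)$ tending to a finite limit exceeding $1$ as $\xi\to+\infty$ (Lemma~\ref{lem:Ka}), and with continuity, this yields $\Theta_2(0):=\inf_{\xi\in\R}\mu_2(0,\xi)>1$. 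Fixing $\lambda_0\in\big(0,\Theta_2(0)-1\big)$, for $\lambda\le\lambda_0$ and $j\ge2$ we get $\mu_j(\xi;T)\ge\mu_2(0,\xi)\ge\Theta_2(0)>1+\lambda$, so that
\[
\mathcal N(\lambda;S,T)=\#\big\{n\in\mathbb Z~:~\mu_1(\xi_n;T)\le 1+\lambda\big\}.
\]

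Finally I would localise the admissible values of $\xi$. If $\xi\le-\sqrt{2(1+\lambda_0)}$ then $(t-\xi)^2\ge 2(1+\lambda_0)$ for all $t\in(0,T)$, and the same holds when $\xi\ge T+\sqrt{2(1+\lambda_0)}$; in either case the quadratic form of $-\partial_t^2+(t-\xi)^2$ on $(0,T)$ dominates $2(1+\lambda_0)\|\cdot\|^2_{L^2}$, so $\mu_1(\xi;T)>1+\lambda$ whenever $\lambda\le\lambda_0$. Hence $\{\xi\in\R:\mu_1(\xi;T)\le1+\lambda\}$ is contained in an interval of length $T+\kappa$, with $\kappa:=2\sqrt{2(1+\lambda_0)}$ depending only on $\lambda_0$, and since $\{\xi_n\}_{n\in\mathbb Z}$ forms a lattice of spacing $2\pi/S$, the number of admissible indices is at most $\tfrac{S(T+\kappa)}{2\pi}+1$. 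For $T\ge T_0$ with any fixed $T_0\ge1$ this is $\le\big(\tfrac1{2\pi}+\tfrac{\kappa}{2\pi T_0}\big)ST+1\le C(1+ST)$, with $C$ depending only on $T_0$ and $\lambda_0$; in the regime $S,T\to\infty$ used in the proof of Theorem~\ref{thm:TDL} the additive constant is absorbed, which gives the asserted estimate \eqref{eq-cyl:nb}.

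The whole computation is elementary once the Fourier decomposition is in place; the single step requiring genuine spectral input, and the one I expect to be the main point, is the strict band separation $\Theta_2(0)>1$, which is precisely Lemma~\ref{mu2} together with the asymptotics of Lemma~\ref{lem:Ka}. Everything else is a shifted harmonic-oscillator comparison and a lattice-point count, so I anticipate no further difficulty.
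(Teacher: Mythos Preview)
Your argument is correct and follows essentially the same route as the proof in \cite{Fo-Ka} to which the paper defers (and which is sketched in the commented-out proof of Lemma~\ref{energy}): Fourier decomposition in the periodic variable, elimination of the bands $j\ge2$ via the strict gap $\Theta_2(0)>1$, confinement of $\xi$ to an interval of length $T+O(1)$, and a lattice-point count. Your caveat about the additive $+1$ is accurate---the honest bound is $\mathcal N(\lambda;S,T)\le C(ST+1)$, which is exactly what is used in Lemma~\ref{lem:subseq} where $S=T-L\to\infty$.
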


\subsubsection{The Dirichlet operator in a
square}\label{sec:Dirchlet}

Recall the magnetic potential $\Ab_0$ in \eqref{eq:vf}. Consider a
positive real number $R$ and the operator
$P^D_{\Omega_R}=-(\nabla-i\Ab_0)^2$ in the square
$\Omega_R=(0,R)\times(0,R)$ and with Dirichlet boundary conditions.
If $\Lambda\in\R$, we define the functions,
\begin{equation}\label{eq-nub}
\nu_{b}(\Lambda)=\frac{1}{2\pi}{\rm Card}\,\left\{n\in\N~:~2n-1\leq\Lambda\right\}\,.
\end{equation}
and
\begin{equation}\label{eq-nub}
N\left(\Lambda ,P^D_{\Omega_R}\right)={\rm tr}\Big(\mathbf 1_{(-\infty,\Lambda ]}(P^D_{\Omega_R})\Big)\,.
\end{equation}
 The next two-sided estimate on the eigenvalue counting function
of the operator $P^D_{\Omega_R}$  is proved in \cite[Thm.~3.1]{CdV}.

\begin{lemma}\label{lem-CdV}
There exists a constant $C>0$ such that, for all $\Lambda\in\R$,
$R>0$  and $A\in(0,R/2)$, the following two-sided estimate holds
true,
$$
(R-A)^2\nu_{b}\left(\Lambda-\frac{C}{A^2}\right)\leq
N\left(\Lambda ,P^D_{\Omega_R}\right)\leq R^2\nu_{b}(\Lambda)\,.
$$
In particular, if $\Lambda<3$, then
$$N\left(\Lambda ,P^D_{\Omega_R}\right)\leq
\frac{R^2}{2\pi}\,.$$
\end{lemma}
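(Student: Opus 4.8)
The plan is to derive both bounds by Dirichlet--Neumann bracketing against the constant-field Landau Hamiltonian $H=-(\nabla-i\Ab_0)^2$ on $\R^2$ (here the magnetic field is $b=1$), whose spectrum is the set of Landau levels $\{2n-1:n\ge1\}$ with spectral projectors $\Pi_n$, and to feed in the single quantitative fact that each Landau level carries density $\tfrac1{2\pi}$ per unit area, i.e. $\Pi_n$ has constant diagonal kernel $\Pi_n(x,x)=\tfrac1{2\pi}$ (forced by magnetic translation invariance). Note that $\nu_b(\Lambda)=\tfrac1{2\pi}\#\{n:2n-1\le\Lambda\}$ is exactly the integrated density of states of $H$ at energy $\Lambda$.

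For the upper bound I would first treat the case in which the flux $R^2$ lies in $2\pi\mathbb{Z}$, so that the magnetic Laplacian $H^{\mathrm{tor}}_R$ on the flat torus $\R^2/(R\mathbb{Z})^2$ with the same field is well defined; its eigenvalues are the $2n-1$, each with multiplicity equal to the number of flux quanta $R^2/(2\pi)$, whence $N(\Lambda,H^{\mathrm{tor}}_R)=\tfrac{R^2}{2\pi}\#\{n:2n-1\le\Lambda\}=R^2\nu_b(\Lambda)$. Since a function in $H^1_0(\Omega_R)$ vanishes on $\partial\Omega_R$, extension by zero embeds $H^1_0(\Omega_R)$ into the form domain of $H^{\mathrm{tor}}_R$ with the same magnetic Dirichlet energy, so the Dirichlet form on $\Omega_R$ is a restriction of the torus form to a subspace, and Theorem~\ref{thm:vp} gives $\mu_n(P^D_{\Omega_R})\ge\mu_n(H^{\mathrm{tor}}_R)$ for every $n$, i.e. $N(\Lambda,P^D_{\Omega_R})\le R^2\nu_b(\Lambda)$. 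For general $R$ I would enlarge $\Omega_R$ to a square of side $R'\ge R$ with $(R')^2\in 2\pi\mathbb{Z}$ and $(R')^2\le R^2+2\pi$; Dirichlet domain monotonicity then yields $N(\Lambda,P^D_{\Omega_R})\le R^2\nu_b(\Lambda)+O(1)$, and the $O(1)$ can be removed by the standard tiling trick (pack $M^2$ magnetically translated copies of $\Omega_R$ into a large square, apply the quantized-flux bound, and let $M\to\infty$). When $\Lambda<3$ one has $\nu_b(\Lambda)\le\tfrac1{2\pi}$, which gives the stated consequence.

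For the lower bound I would build trial functions by the coherent-state construction already used in this paper. Fixing $n$ with $2n-1\le\Lambda-C/A^2$ and a sub-square of side $R-A$ inside $\Omega_R$, I would place at the points of a lattice of mesh slightly larger than the critical value $\sqrt{2\pi}$ (inside that sub-square) the magnetically translated level-$n$ ground state; these coherent states form a Riesz sequence of cardinality $(R-A)^2\nu_b(\Lambda-C/A^2)(1+o(1))$. Multiplying each by a cutoff $\chi$ equal to $1$ on the sub-square, supported in $\Omega_R$, with $|\nabla\chi|\le C/A$, the IMS localisation formula gives $\langle H(\chi u),\chi u\rangle\le(2n-1)\|\chi u\|^2+\int|\nabla\chi|^2|u|^2$ up to a term controlled by $\|\chi u\|$, and the Gaussian decay of the coherent states bounds $\int|\nabla\chi|^2|u|^2$ by $C/A^2$, so each trial function has Rayleigh quotient at most $(2n-1)+C/A^2\le\Lambda$, while the cutoff perturbs the Gram matrix by only $o(1)$. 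Collecting these functions over all admissible $n$ and invoking Theorem~\ref{thm:vp} yields the lower bound after absorbing the mesh excess and the $o(1)$ into $C$ and into the side $R-A$.

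The hard part will be the bookkeeping in the lower bound: one has to pin down the exact density $\tfrac1{2\pi}$ of linearly independent level-$n$ states per unit area — the subtle point being that at the critical Gabor density the coherent states are complete but not a Riesz basis, so one works just below it and recovers the constant in the limit — and one has to show that the boundary cutoff costs only $O(A^{-2})$ in energy and $o(1)$ in the normalisation, uniformly in the lattice point. These steps are routine but delicate; by contrast the upper bound is soft once the magnetic torus is available, the only wrinkle there being the flux integrality handled above, and the entire conceptual content sits in the bracketing together with the identity $\Pi_n(x,x)=\tfrac1{2\pi}$.
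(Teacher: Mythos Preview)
The paper does not prove this lemma itself; it is quoted from Colin de Verdi\`ere \cite[Thm.~3.1]{CdV}. Your upper-bound argument via the magnetic torus is correct and is essentially the standard one: for $R^2\in2\pi\mathbb N$ the form domain $H^1_0(\Omega_R)$ embeds in that of $P^{\rm per}_{\Omega_R}$ (both conditions in \eqref{eq:ER} are vacuous for functions vanishing on $\partial\Omega_R$), and the tiling trick legitimately removes the flux-quantisation constraint.

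Your lower-bound argument, however, has a genuine gap. A lattice of level-$n$ coherent states at mesh $\ell>\sqrt{2\pi}$ produces at most about $(R-A)^2/\ell^2$ trial functions, a \emph{fixed fraction} $2\pi/\ell^2<1$ of the required $(R-A)^2/(2\pi)$. This multiplicative shortfall cannot be ``absorbed into $C$ and into the side $R-A$'' uniformly: forcing $(R-A')^2\le(2\pi/\ell^2)(R-A)^2$ makes $A'\ge R\bigl(1-\sqrt{2\pi}/\ell\bigr)+A\sqrt{2\pi}/\ell$ grow linearly with $R$, so no universal constant $C'$ in $\nu_b(\Lambda-C'/(A')^2)$ can survive; and letting $\ell\downarrow\sqrt{2\pi}$ does not help, because the lower Riesz bound of the lattice collapses and the Gram-matrix perturbation argument after cutoff then forces $A\to\infty$. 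The sharp prefactor $(R-A)^2$ is obtained instead by working directly with the constant diagonal $\Pi_n(x,x)=1/(2\pi)$ of the Landau projector---equivalently, with the torus eigenspaces of exact dimension $(R')^2/(2\pi)$ for a quantised side $R'$---which delivers the correct count without passing through a sub-critical lattice. (That said, only the upper bound and its consequence for $\Lambda<3$ are actually invoked downstream, in Lemma~\ref{lem:subseq}.)
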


\subsubsection{The periodic operator}\label{sec:per}

Consider a positive number $R$, the square
$\Omega_R=(0,R)\times(0,R)$ and the function space,
\begin{equation}\label{eq:ER}
E_R=\{u\in H^1_{\rm loc}(\R^2)~:~u(x_1+R,x_2)=u(x_1,x_2)~\&~u(x_1,x_2+R)=e^{-iRx_1}u(x_1,x_2)\,\}\,.
\end{equation}
Recall the magnetic potential $\Ab_0$ in \eqref{eq:vf}. If $u\in
E_R$, then $|u|$ and $|(\nabla-i\Ab_0)u|$ are periodic with respect
to the lattice generated by $\Omega_R$. Consider the self-adjoint
operator
$$P^{\rm per}_{\Omega_R}=-(\nabla-i\Ab_0)^2\quad{\rm in}\quad
L^2(\Omega_R)\,,$$ whose domain is that defined by the Friedrichs'
extension associated with the quadratic form,
$$E_R\ni f\mapsto
\int_{\Omega_R}|(\nabla-i\Ab_0)u|^2\,dx\,.$$ Denote by $(\mu_j)$ the
sequence of distinct eigenvalues of the operator $P^{\rm
per}_{\Omega_R}$. Let us recall the following classical results (see
\cite[Proposition~2.9]{FK3D}). These results are valid under the
assumption that $R^2/(2\pi)$ is a positive integer.
\begin{itemize}
\item The first eigenvalue of $P^{\rm per}_{\Omega_R}$ is $\mu_1(P^{\rm per}_{\Omega_R})=1$ and the second eigenvalue $\mu_2(P^{\rm per}_{\Omega_R})\geq3$.
\item The dimension of the eigenspace ${\rm Ker}(P^{\rm
per}_{\Omega_R}-{\rm Id})$ is $R^2/(2\pi)$\,.
\end{itemize}

As a consequence, we may state the following lemma.

\begin{lem}\label{lem:per}
Suppose that $R^2\in 2\pi\mathbb N$. If $0\leq\lambda<2$ and $ N_{\rm
per}(\lambda,R)={\rm tr}\big(\mathbf
1_{(-\infty,1+\lambda]}(P_{\Omega_R})\big)$, then,
$$ N_{\rm
per}(\lambda,R)=\frac{R^2}{2\pi}\,.$$
\end{lem}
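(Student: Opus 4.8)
The plan is to read off Lemma~\ref{lem:per} directly from the two classical facts on the spectrum of $P^{\rm per}_{\Omega_R}$ recalled immediately above, both of which are available precisely because $R^2/(2\pi)$ is a positive integer. First I would note that, by definition, $N_{\rm per}(\lambda,R)$ is the number of eigenvalues of $P^{\rm per}_{\Omega_R}$ lying in $(-\infty,1+\lambda]$, counted with multiplicity, and that the hypothesis $0\le\lambda<2$ places this interval inside $(-\infty,3)$.

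Next I would invoke the recalled structure of the low-lying spectrum: the bottom eigenvalue is $\mu_1(P^{\rm per}_{\Omega_R})=1$, its eigenspace ${\rm Ker}(P^{\rm per}_{\Omega_R}-{\rm Id})$ has dimension exactly $R^2/(2\pi)$, and the next distinct eigenvalue satisfies $\mu_2(P^{\rm per}_{\Omega_R})\ge 3$. Since $1+\lambda<3\le\mu_2(P^{\rm per}_{\Omega_R})$, there is no eigenvalue in the half-open interval $(1,1+\lambda]$; hence every eigenvalue at most $1+\lambda$ equals $1$, and the count is exactly the multiplicity of $1$, namely $R^2/(2\pi)$. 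This yields $N_{\rm per}(\lambda,R)=R^2/(2\pi)$, which is the claim.

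There is no real obstacle in this argument: the entire weight of the lemma is borne by the cited spectral description of $P^{\rm per}_{\Omega_R}$ --- its first eigenvalue, the dimension of the corresponding eigenspace, and the spectral gap up to $3$ --- and the only elementary verification is the inclusion $1+\lambda<3$. The single point worth spelling out is that the operator $P_{\Omega_R}$ named in the statement is to be identified with the periodic operator $P^{\rm per}_{\Omega_R}$ of the preceding paragraph, so I would make that notational coincidence explicit at the outset.
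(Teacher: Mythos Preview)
Your proposal is correct and matches the paper's approach exactly: the paper presents Lemma~\ref{lem:per} as an immediate consequence of the two spectral facts about $P^{\rm per}_{\Omega_R}$ listed just before it, without giving any further proof. Your observation that $P_{\Omega_R}$ in the statement must be read as $P^{\rm per}_{\Omega_R}$ is also apt; the paper silently relies on this identification.
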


\subsubsection{The operator in a sector}\label{sec:sec}

Recall the magnetic potential $\Ab_0$ in \eqref{eq:vf}. Consider
the operator,
\begin{equation}\label{eq:sec}
P_{\Omega_{R,\pi/2}}=-(\nabla-i\Ab_0)^2\quad {\rm in}~L^2(\Omega_{R,\pi/2})\,,
\end{equation}
where $\Omega_{R,\pi/2}=\{(r\cos\theta,r\sin\theta)~:~0\leq
r<R\,,~0<\theta<\pi/2\,\}$. Functions in the domain of
$P_{\Omega_{R,\pi/2}}$ satisfy Neumann condition on $\theta=0$ and
$\theta=\pi/2$, and Dirichlet condition on $r=R$.

The operator $P_{\Omega_{R,\pi/2}}$ has compact resolvent and its
spectrum consists of an increasing sequence of eigenvalues $(\zeta_j)$
counting multiplicities. We introduce,
\begin{equation}\label{eq:nb-sec}
\mathcal N_{\rm sec}(\lambda,R)={\rm Card}\{j~:~\zeta_j\leq 1+\lambda\}\,.
\end{equation}

A useful rough bound on $\mathcal N_{\rm sec}(\lambda,R)$ is proved
in \cite{KK}. We recall this bound in the next lemma.

\begin{lem}\label{lem:sec}
There exist positive constants $C$, $R_0$ and $\lambda_1$ such that,
for all $R\geq R_0$ and $\lambda\in[0,\lambda_1]$, we have,
$$\mathcal N_{\rm sec}(\lambda,R)\leq C(R^2+1)\,.$$
\end{lem}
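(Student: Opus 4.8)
The plan is to prove this rough bound by the standard localization argument (the one used in \cite{KK}): a Dirichlet--Neumann bracketing combined with an IMS decomposition at a \emph{fixed}, $R$-independent length scale $\ell$. First I would fix $\ell>0$, to be chosen later in terms of $\lambda_1$ only, and pick a smooth quadratic partition of unity $\chi_0^2+\chi_1^2+\chi_2^2+\chi_3^2=1$ on $\Omega_{R,\pi/2}$ with $\sum_i|\nb\chi_i|^2\le C\ell^{-2}$, where $\chi_0$ is supported in the bulk region $\Omega_0$ of points at distance $\ge\ell$ from the two straight edges $\theta=0,\pi/2$ and from the vertex, $\chi_1,\chi_2$ are supported in the $2\ell$-neighbourhoods $\Omega_1,\Omega_2$ (inside the sector) of the two straight edges, and $\chi_3$ is supported in the $2\ell$-neighbourhood $\Omega_3$ of the vertex. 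Let $P_i$ be the operator $-(\nb-i\Ab_0)^2$ on $\Omega_i$ with the Neumann condition on $\partial\Omega_i\cap(\{\theta=0\}\cup\{\theta=\pi/2\})$ and the Dirichlet condition on the remaining part of $\partial\Omega_i$ (the interior interfaces and the arc $r=R$), and let $Q_i$ be its quadratic form. The IMS formula applied to the span $V$ of the eigenfunctions of $P_{\Omega_{R,\pi/2}}$ below $1+\lambda$ gives $\sum_{i=0}^{3}Q_i(\chi_i u)\le(1+\lambda+C\ell^{-2})\norm{u}^2$ for all $u\in V$, each $\chi_i u$ lying in the form domain of $P_i$ since it vanishes near the interior interfaces. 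The map $u\mapsto(\chi_iu)_i$ being injective (as $\sum_i\chi_i^2=1$), the min--max principle of Theorem~\ref{thm:vp} applied to $\bigoplus_iP_i$ then yields
\[
\mathcal N_{\rm sec}(\lambda,R)\le\sum_{i=0}^{3}N\bigl(1+\lambda+C\ell^{-2},\,P_i\bigr).
\]

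Next I would estimate the four terms, the key being that I may at this stage choose $\lambda_1\in(0,1)$ and then $\ell$ large enough, depending only on $\lambda_1$, so that $1+\lambda_1+C\ell^{-2}<3$; from now on $\lambda\le\lambda_1$. For the bulk term, $P_0$ is a Dirichlet realization and $\Omega_0$ sits inside a square of side $R'\le\sqrt2\,R$, so by domain monotonicity of Dirichlet eigenvalues (extension by zero) together with a translation gauge, $N(1+\lambda+C\ell^{-2},P_0)\le N(1+\lambda+C\ell^{-2},P^D_{\Omega_{R'}})$, which by Lemma~\ref{lem-CdV} (the energy being $<3$) is at most $(R')^2/(2\pi)\le CR^2$. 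For the edge terms, $\Omega_1$ and $\Omega_2$ are strips of width $\le2\ell$ and length $\le R$; cutting each by $\le\lceil R/\ell\rceil$ further Dirichlet interfaces (a Dirichlet--Neumann bracketing, which only raises eigenvalues) splits $P_k$, $k=1,2$, into an orthogonal sum of $\le\lceil R/\ell\rceil$ operators on cells of diameter $\le C\ell$, and since each cell operator has discrete spectrum and the cells are of uniformly bounded geometry (up to translation and gauge), each carries at most a constant $C_\ell$ of eigenvalues below $3$, so $N(1+\lambda+C\ell^{-2},P_k)\le C_\ell R$; alternatively this term can be bounded directly by the cylinder estimate of Lemma~\ref{roughestimate'}. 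For the corner term, $\Omega_3$ has diameter $\le C\ell$ with $\ell$ fixed, so $N(1+\lambda+C\ell^{-2},P_3)\le C_\ell$. Summing the four contributions and using $R\ge R_0\ge1$ (so $R\le R^2$) gives $\mathcal N_{\rm sec}(\lambda,R)\le C(R^2+1)$.

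The main obstacle is the bookkeeping of inequality directions in the IMS/bracketing step: I must check that the localized operators $P_i$ receive the Dirichlet condition on \emph{all} newly created interfaces --- consistent with $\chi_i u\in H^1_0$ there --- so that the eigenvalue count genuinely splits into a sum, while simultaneously keeping the localization energy shift $C\ell^{-2}$ below $2$, so that Lemma~\ref{lem-CdV} is applicable to the bulk square; this is exactly what forces the restriction $\lambda\in[0,\lambda_1]$ with $\lambda_1$ small. A secondary, ultimately harmless, subtlety is the singular geometry at the vertex: since the chosen vertex neighbourhood $\Omega_3$ has size independent of $R$, it contributes only a bounded constant and needs no separate treatment.
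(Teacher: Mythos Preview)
Your proposal is correct and follows the standard localization/bracketing argument that the paper attributes to \cite{KK}; note that the paper itself does not give a proof of this lemma but only quotes the result. The decomposition into a bulk Dirichlet piece (handled via Lemma~\ref{lem-CdV}), two edge strips (handled either by further Dirichlet bracketing into translation/gauge-equivalent cells or by comparison with the cylinder operator of Lemma~\ref{roughestimate'}, since imposing Dirichlet rather than periodic in the tangential variable only raises eigenvalues), and a fixed-size corner piece is exactly the intended scheme, and your bookkeeping of the IMS inequality together with the injectivity of $u\mapsto(\chi_iu)_i$ is the right way to justify the splitting of the counting function.
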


\subsection{Proof of Theorem~\ref{thm:TDL}}\label{sec:TDL} Through
this section, the following convention will be used. If $P$ is a
self-adjoint operator and $\Lambda<\inf\sigma_{\rm ess}(P)$, denote
by
$$ N(\Lambda, P)={\rm tr}\Big(\mathbf
1_{(-\infty,\Lambda]}(P)\Big)\,.$$

Recall the operator $P_{\Omega_T}$ and the number $\mathcal
N(\lambda, T)=N\big(1+\lambda,P_{\Omega_T}\big)$ introduced in
\eqref{nb-torus}.

We start by the observation:

\begin{lem}\label{lem:TDL-per}
Let $T_n=\sqrt{2\pi\, n}$, $n\in\mathbb N$. For all
$\lambda\in[0,2)$, there holds,
$$\frac{\mathcal N(1+\lambda,T_n)}{T_n^2}\geq \frac1{2\pi}\,.$$
\end{lem}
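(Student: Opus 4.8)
The plan is to compare $P_{\Omega_{T_n}}$ with the quasi-periodic operator $P^{\rm per}_{\Omega_{T_n}}$ from Subsection~\ref{sec:per} and to exploit that, when $R^2\in 2\pi\N$, the bottom eigenvalue of the latter is known explicitly together with its multiplicity. Since $T_n^2=2\pi n$, the classical facts recalled just before Lemma~\ref{lem:per} apply with $R=T_n$: the first eigenvalue of $P^{\rm per}_{\Omega_{T_n}}$ equals $1$, the second is $\ge 3$, and $\dim{\rm Ker}\big(P^{\rm per}_{\Omega_{T_n}}-{\rm Id}\big)=T_n^2/(2\pi)=n$. Equivalently, writing $(\mu_k^{\rm per})_{k\ge1}$ for the eigenvalues of $P^{\rm per}_{\Omega_{T_n}}$ counted with multiplicity, one has $\mu_1^{\rm per}=\cdots=\mu_n^{\rm per}=1$.

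First I would record the inclusion of form domains. A function $u\in E_{T_n}$ (see \eqref{eq:ER}) belongs to $H^1_{\rm loc}(\R^2)$, so its restriction to the open square $\Omega_{T_n}$ lies in $H^1(\Omega_{T_n})=H^1_{\Ab_0}(\Omega_{T_n})$, which is the form domain of the Neumann operator $P_{\Omega_{T_n}}$; moreover both operators are associated with the very same quadratic form $u\mapsto\int_{\Omega_{T_n}}|(\nabla-i\Ab_0)u|^2\,dx$, only the form domains differ. By the min--max principle (the form-domain version of Theorem~\ref{thm:vp}, which follows from it because the operator domain is a form core), monotonicity of eigenvalues under this form-domain inclusion yields
\[
\mu_k\big(P_{\Omega_{T_n}}\big)\le \mu_k\big(P^{\rm per}_{\Omega_{T_n}}\big)\qquad(k\ge 1).
\]
Concretely, one may take $M$ in Theorem~\ref{thm:vp} to be the $n$-dimensional space of restrictions to $\Omega_{T_n}$ of the eigenfunctions spanning ${\rm Ker}\big(P^{\rm per}_{\Omega_{T_n}}-{\rm Id}\big)$: the restriction map is injective because the quasi-periodicity relations in \eqref{eq:ER} recover $u$ on all of $\R^2$ from its values on $\Omega_{T_n}$, and the magnetic Rayleigh quotient is identically $1$ on $M$.

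For $k=1,\dots,n$ the right-hand side above equals $1$, hence $\mu_k\big(P_{\Omega_{T_n}}\big)\le 1\le 1+\lambda$ for every $\lambda\ge 0$. Thus $P_{\Omega_{T_n}}$ has at least $n$ eigenvalues, counted with multiplicity, not exceeding $1+\lambda$, that is $\mathcal N(1+\lambda,T_n)\ge n=T_n^2/(2\pi)$; dividing by $T_n^2$ gives the asserted inequality (a fortiori on $[0,2)$). I do not expect any genuine obstacle here: the only points deserving a little care are the inclusion $E_{T_n}|_{\Omega_{T_n}}\subset H^1(\Omega_{T_n})$ together with the identity of the two quadratic forms on this smaller space, and---should one prefer to argue directly with Theorem~\ref{thm:vp} rather than with abstract form-domain monotonicity---the injectivity of the restriction map, so that the test space keeps dimension $n$.
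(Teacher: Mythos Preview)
Your proof is correct and follows essentially the same route as the paper: both arguments use the inclusion of the form domain $E_{T_n}$ into $H^1(\Omega_{T_n})$, apply the min--max principle to compare the Neumann eigenvalues with those of $P^{\rm per}_{\Omega_{T_n}}$, and then invoke the explicit spectral information about the periodic operator (first eigenvalue $1$ with multiplicity $T_n^2/(2\pi)$). Your added remark on the injectivity of the restriction map is a pleasant clarification, though not needed once one uses the abstract form-domain monotonicity as the paper does.
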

\begin{proof}
Recall the operator $P^{\rm per}_{\Omega_R}$ introduced in
Sec.~\ref{sec:per} together with the number $\mathcal N_{\rm per}(\lambda,R)$
in Lemma~\ref{lem:per}. Notice that  functions in the form domain of
$P^{\rm per}_{\Omega_R}$ are in $H^1(\Omega_R)$ and consequently in
the form domain of $P_{\Omega_R}$. The variational min-max principle
(Theorem~\ref{thm:vp}) then tells us that the eigenvalues of $P^{\rm
per}_{\Omega_R}$ are larger than the corresponding ones of
$P_{\Omega_R}$. Consequently (we use $R=T_n$),
$$\mathcal N(\lambda,T_n)\geq N_{\rm
per}(\lambda,T_n)\,.$$ Notice that $T_n^2\in2\pi\mathbb N$. Consequently, when $\lambda\in[0,2)$, it results from Lemma~\ref{lem:per} that
$$N_{\rm
per}(\lambda,T_n)=\frac{T_n^2}{2\pi}\,.$$ This proves Lemma~\ref{lem:TDL-per}.
\end{proof}

\begin{lem}\label{lem:subseq}
There exist positive constants $\delta\in(0,1)$, $T_1$ and $C$ such
that, for all $\lambda\in[0,\delta]$ and $T\geq T_1$, there holds,
$$\mathcal N(\lambda,T)\leq \frac{T^2}{2\pi}+CT\,.$$
\end{lem}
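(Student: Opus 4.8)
The plan is a Dirichlet--Neumann bracketing argument feeding off the three rough bounds recalled in Subsection~\ref{sec:p}. Fix once and for all the constants $a=\max(T_0,R_0)$, with $T_0$ from Lemma~\ref{roughestimate'} and $R_0$ from Lemma~\ref{lem:sec}, $\delta=\min(\lambda_0,\lambda_1,1)$, and $T_1=2a+1$. For $T\ge T_1$ the four interior segments
$$\Sigma_T=\big(\{x_1=a\}\cup\{x_1=T-a\}\cup\{x_2=a\}\cup\{x_2=T-a\}\big)\cap\Omega_T$$
cut $\Omega_T$ into a $3\times3$ array of cells: the central square $Q=(a,T-a)^2$, four boundary rectangles $R_1,\dots,R_4$ of dimensions $(T-2a)\times a$ lying along the four edges of $\partial\Omega_T$, and four corner squares $C_1,\dots,C_4$ of side $a$. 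Imposing an extra Dirichlet condition on $\Sigma_T$ shrinks the form domain of $P_{\Omega_T}$ to $\{u\in H^1(\Omega_T):u|_{\Sigma_T}=0\}$, so by the min-max principle (Theorem~\ref{thm:vp}), with the convention $N(\Lambda,P)={\rm tr}(\mathbf 1_{(-\infty,\Lambda]}(P))$,
$$\mathcal N(\lambda,T)\le N(1+\lambda,P^D_Q)+\sum_{i=1}^4 N(1+\lambda,P_{R_i})+\sum_{i=1}^4 N(1+\lambda,P_{C_i})\,,$$
where $P^D_Q$ is the Dirichlet realization of $-(\nabla-i\Ab_0)^2$ on $Q$ and $P_{R_i}$, $P_{C_i}$ carry Neumann conditions on the portion of $\partial\Omega_T$ they meet and Dirichlet conditions on $\Sigma_T$. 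Note $1+\lambda\le 1+\delta<3$ throughout.

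Now I would estimate the three types of cells. For the central cell, a gauge transformation (replacing $\Ab_0$ by a potential for the unit field based at the corner $(a,a)$) together with a translation makes $P^D_Q$ unitarily equivalent to the operator $P^D_{\Omega_{T-2a}}$ of Subsection~\ref{sec:Dirchlet}; since $1+\lambda<3$, Lemma~\ref{lem-CdV} gives $N(1+\lambda,P^D_Q)\le (T-2a)^2/(2\pi)\le T^2/(2\pi)$ (using $a\le T$). For the boundary cell $R_1=(a,T-a)\times(0,a)$, relaxing the Dirichlet condition on its two short sides $\{x_1=a\}$ and $\{x_1=T-a\}$ to the periodic condition enlarges the form domain (this is legitimate because $\Ab_0=(-x_2,0)$ is independent of $x_1$, so no flux obstruction arises and functions vanishing at both ends extend to $H^1$ of the cylinder) and only increases the counting function; after a translation the resulting operator is exactly the cylinder operator of Subsection~\ref{sec:torus} with period $S=T-2a$ and width "$T$"$=a\ge T_0$, so Lemma~\ref{roughestimate'} yields $N(1+\lambda,P_{R_1})\le C(T-2a)a\le CaT$. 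The three remaining rectangles are identical after gauge transformations and a reflection or exchange of the coordinate axes (complex conjugation absorbs the resulting sign change of the magnetic field). For a corner cell $C_i$, embed it into the quarter-disk of radius $a\sqrt2$ and extend test functions by $0$ across the two Dirichlet sides (which lie interior to the quarter-disk); this realizes $P_{C_i}$ as a form restriction of the sector operator of Subsection~\ref{sec:sec} with $R=a\sqrt2\ge R_0$, so Lemma~\ref{lem:sec} gives $N(1+\lambda,P_{C_i})\le C(2a^2+1)$, a constant. Summing the three contributions and using $T\ge T_1\ge1$,
$$\mathcal N(\lambda,T)\le \frac{T^2}{2\pi}+4CaT+4C(2a^2+1)\le\frac{T^2}{2\pi}+C_0T\qquad(T\ge T_1,\ \lambda\in[0,\delta])\,,$$
with $C_0=4Ca+4C(2a^2+1)$, which is the assertion.

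The only genuinely delicate points are bookkeeping ones: one must check that each gauge transformation, translation, reflection, or axis exchange used really does conjugate the restriction of $P_{\Omega_T}$ to a cell into the precise model operator appearing in the quoted lemma --- the unit magnetic field and the exact pattern of Neumann/Dirichlet/periodic sides both have to match --- and one must verify the form-domain monotonicities underlying the bracketing (enlargement when passing from Dirichlet to periodic on the short sides of the $R_i$, and when extending functions on the $C_i$ by zero into the sector). None of this requires more than the definitions; all quantitative input is contained in Lemmas~\ref{lem-CdV}, \ref{roughestimate'} and \ref{lem:sec}.
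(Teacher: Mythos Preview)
Your proof is correct and uses the same three model-operator lemmas (Lemma~\ref{lem-CdV} for the bulk, Lemma~\ref{roughestimate'} for the edge strips, Lemma~\ref{lem:sec} for the corners) as the paper, but the localization mechanism is different. The paper covers $\Omega_T$ by \emph{overlapping} cells and uses a smooth partition of unity together with the IMS formula; this produces an energy shift of order $C/L^2$, so that the counting functions on the right are evaluated at $1+\lambda+C/L^2$ rather than $1+\lambda$, and one must then choose the overlap width $L$ large enough to keep this shifted level inside the range of validity of the cited lemmas. You instead cut $\Omega_T$ by hard interior Dirichlet walls into a \emph{disjoint} $3\times3$ grid, which is pure Dirichlet--Neumann bracketing and incurs no energy shift at all. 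The price you pay is the bookkeeping you flag at the end (gauge transformations, reflections and complex conjugation to bring each cell to the exact model form, and the form-domain inclusions Dirichlet~$\subset$~periodic on the edge rectangles and square~$\subset$~sector on the corners), but these are routine and your sketch handles them correctly. Your argument is arguably the more elementary of the two; the paper's IMS route is more in the spirit of the semiclassical localizations used elsewhere in the article.
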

\begin{proof}
Consider a number $L\in(0,T)$. We cover the square
$\Omega_T=(0,T)\times(0,T)$ by sets $U$, $V_j$ and $U_j$,
$j\in\{1,2,3,4\}$ defined as follows:
\begin{align*}
&U=\left(\frac{L}2,T-\frac{L}2\right)\times \left(\frac{L}2,T-\frac{L}2\right)\,,\\
&U_1=\left(\frac{L}2,T-\frac{L}2\right)\times[0,L)\,,& U_2&=[0,L)\times \left(\frac{L}2,T-\frac{L}2\right)\,,\\
&U_3=\left(\frac{L}2,T-\frac{L}2\right)\times(T-L,T]\,,& U_4&=(T-L,T]\times\left(\frac{L}2,T-\frac{L}2\right)\,,\\
&V_1=[0,L)\times[0,L)\,,& V_2&=(T-L,T]\times [0,L)\,,\\
&V_3=[0,L)\times (T-L,T]\,,& V_4&=(T-L,T]\times(T-L,T]\,.
\end{align*}
Let $P_{V_j}$ and $P_{U_j}$ be   self-adjoint realizations  of the
operator $-(\nabla-i\Ab_0)^2$ in $L^2(V_j)$ and $L^2(U_j)$
respectively and defined as follows.  For every $j$ and
$\Omega\in\{V_j,U_j\}$, functions in the domain of $P_{\Omega}$
 satisfy Neumann condition on the common smooth
boundary of $\Omega$ and $\Omega_T$ and Dircihlet condition
elsewhere.

Notice that the operators $P_{V_j}$, $j\in\{1,2,3,4\}$, are unitary
equivalent and have the same spectra. Also, it results from the
variational min-max principle that the spectrum of $P_{V_j}$ is
below that of the operator $P_{\Omega_{2L,\pi/2}}$ introduced in
Sec.~\ref{sec:sec} thereby obtaining,
$$\mathcal N(\lambda,P_{V_j})\leq \mathcal N_{\rm sec}(\lambda,2R)\,.$$
The operators $P_{U_j}$, $j\in\{1,2,3,4\}$,  are unitary equivalent
also and  (recall the operator $P_{\Omega_{S,T}}$ introduced in
Sec.~\ref{sec:torus}),
$$\sigma(P_{U_j})=\sigma(P_{\Omega_{T-L,L}})\,,\quad(j\in\{1,2,3,4\})\,.$$
Consider a partition of unity
$$\sum_{j=1}^4\chi_j^2+\sum_{j=1}^4\varphi_j^2+f^2=1\quad{\rm
in~}\overline{\Omega_T}\,,$$ such that
$$\sum_{j=1}^4\left(|\nabla\chi|^2+|\nabla\varphi_j|^2\right)+|\nabla
f|^2\leq \frac{C}{L^2}\,,$$
$$
{\rm supp}\chi_j\subset V_j\,,\quad {\rm supp}f_j\subset
U_j\,,\quad{\rm supp}\,f\subset U\,,$$
and $C$ is a universal constant.

Using the IMS decomposition formula, we may write for any function
$u$ in the form domain of $P_{\Omega_T}$,
\begin{align*}
q(u)&=q(fu)
+\sum_{j=1}^4q(\chi_ju)+
\sum_{j=1}^4q(\varphi_ju)
-\int_\Omega\left(|\nabla f|^2+\sum_{j=1}^4(|\nabla \chi_j|^2+|\nabla \varphi_j|^2)\right)|u|^2\,dx\\
&\geq q(fu)
+\sum_{j=1}^4q(\chi_ju)+
\sum_{j=1}^4q(\varphi_ju)-\frac{C}{L^2}\int_\Omega|u|^2\,dx\,,
\end{align*}
where the quadratic form $q$ is defined by,
$$q(v)=\displaystyle\int_\Omega|(\nabla-i\Ab_0)v|^2\,dx\,.$$
As has been proven in \cite{CdV}, it results from the variational
min-max principle (Theorem~\ref{thm:vp}):
\begin{equation}\label{eq:TDL-p}
\mathcal{N}(\lambda,P_{\Omega_T})\leq N(1+\lambda+\frac{C}{L^2},
P^D_{\Omega_{T-L}})+\sum_{j=1}^4 N(1+\lambda+\frac{C}{L^2},
P_{V_j})+\sum_{j=1}N(1+\lambda+\frac{C}{L^2},
P_{U_j})\,.\end{equation} Recall that the operator
$P^D_{\Omega_{R}}$ (with $R=T-L$) has been introduced in
Sec.~\ref{sec:Dirchlet}. Let
$\lambda_2=\frac12\min(\lambda_0,\lambda_1,1)$ where $\lambda_0$ and
$\lambda_1$ are as introduced in Lemmas~\ref{roughestimate'} and
\ref{lem:sec}. Select $L$ such that,
$$L\geq T_0\quad{\rm and}~ \lambda+\frac{C}{L^2}<\min(\lambda_0,\lambda_1,1)\,,\quad
(\lambda\in[0,\lambda_2])\,,$$ where $T_0$ is as in Lemma~\ref{roughestimate'}.

 Consequently, it follows from
Lemma~\ref{lem-CdV} that,
$$N(1+\lambda+\frac{C}{L^2},
P^D_{\Omega_{T-L}})\leq \frac{(T-L)^2}{2\pi}\,.$$ Also, as pointed
earlier and using Lemmas~\ref{roughestimate'} and \ref{lem:sec}, we
get for $\lambda\in[0,\lambda_2]$ and sufficiently large $T$,
\begin{align*}
&N(1+\lambda+\frac{C}{L^2},
P_{V_j})\leq\mathcal N_{\rm sec}(\lambda+\frac{C}{L^2},2L)\leq C(4L^2+1)\,,\\
&N(1+\lambda+\frac{C}{L^2},
P_{U_j})\leq \mathcal N(\lambda;T-L,L)\leq C(T-L)L\,.
\end{align*}
By substituting the above upper bounds into \eqref{eq:TDL-p}, we get
the upper bound in Lemma~\ref{lem:subseq}.
\end{proof}

\begin{proof}[Proof of Theorem~\ref{thm:TDL}]
Let $\lambda\in[0,\delta]$ with $\delta$ as in
Lemma~\ref{lem:subseq}. In light of Lemma~\ref{lem:TDL-per}, we get
$$\limsup_{T\to\infty}\frac{\mathcal N(\lambda,T)}{T^2}\geq
\frac1{2\pi}\,.$$ On the other hand, Lemma~\ref{lem:subseq} tells us that,
$$\limsup_{T\to\infty}\frac{\mathcal N(\lambda,T)}{T^2}\leq
\frac1{2\pi}\,.$$
\end{proof}

\appendix
\section{The quadratic form in \eqref{QF-Gen} is semi-bounded}

By density of smooth functions in $H^1$ and compactness of the
boundary $\partial\Omega$, the semi-boundedness of \eqref{QF-Gen}
follows from:
\begin{lem}\label{lem:app}
Let $\gamma\in L^3(\partial\Omega)$ and $x_0\in\partial\Omega$.
There exist constants $C_0>0$ and $r_0>0$ such that, for all $u\in
C^\infty_0(\overline{B(x_0,r_0)\cap\Omega})$,
$$\|\nabla u\|^2_{L^2(B(x_0,r_0))}+\int_{\overline{B(x_0,r_0)}\cap\partial\Omega}\gamma(x)
|u(x)|^2\,ds(x)\geq -C_0\|u\|^2_{L^2(B(x_0,r_0))}\,.$$
\end{lem}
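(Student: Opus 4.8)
The plan is to reduce to a flat model problem and then to absorb the boundary term by splitting $\gamma$ into a bounded part and a part with small $L^3$ norm. First I would flatten the boundary near $x_0$: since $\partial\Omega$ is $C^3$, there exist $r_0>0$ and a $C^2$-diffeomorphism $\Psi$ from a neighbourhood of $x_0$ onto a neighbourhood of the origin in $\R^2$ carrying $\partial\Omega\cap B(x_0,r_0)$ into $\{x_2=0\}$ and $\Omega\cap B(x_0,r_0)$ into $\{x_2>0\}$; shrinking $r_0$ we may assume $D\Psi$ and $D\Psi^{-1}$ are uniformly close to orthogonal matrices, so that the $L^2$-norms of $u$ and $\nabla u$, and the $L^p$-norms of $u$ along the boundary, are each comparable, with universal constants, to the corresponding quantities for $v:=u\circ\Psi^{-1}$ on the half-plane. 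Since $u\in C^\infty_0(\overline{B(x_0,r_0)\cap\Omega})$ vanishes near $\partial B(x_0,r_0)$, the function $v$ extends by zero to an element of $C^\infty_0(\overline{\R^2_+})$. Thus it suffices to establish, for every $v\in C^\infty_0(\overline{\R^2_+})$, an estimate
\[
\Big|\int_{\R}\widetilde\gamma(x_1)\,|v(x_1,0)|^2\,dx_1\Big|\leq \tfrac12\|\nabla v\|^2_{L^2(\R^2_+)}+C_0\|v\|^2_{L^2(\R^2_+)},
\]
where $\widetilde\gamma\in L^3(\R)$ is the push-forward of $\gamma$ (times the relevant Jacobian factor), still with $\|\widetilde\gamma\|_{L^3(\R)}\lesssim \|\gamma\|_{L^3(\partial\Omega)}$.

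The two trace inequalities I would use on $\R^2_+$ are elementary. From $|v(x_1,0)|^2=-\int_0^\infty\partial_{x_2}|v(x_1,x_2)|^2\,dx_2$ and Cauchy--Schwarz one gets, for every $\varepsilon>0$,
\[
\|v(\cdot,0)\|^2_{L^2(\R)}\leq 2\,\|v\|_{L^2(\R^2_+)}\,\|\partial_{x_2}v\|_{L^2(\R^2_+)}\leq \varepsilon\,\|\nabla v\|^2_{L^2(\R^2_+)}+\varepsilon^{-1}\|v\|^2_{L^2(\R^2_+)};
\]
and the trace map $H^1(\R^2_+)\to H^{1/2}(\R)\hookrightarrow L^3(\R)$ (the Sobolev embedding being valid since $\tfrac12$ is the critical exponent in dimension one) provides a fixed constant $C_{\rm tr}>0$ with $\|v(\cdot,0)\|^2_{L^3(\R)}\leq C_{\rm tr}\big(\|\nabla v\|^2_{L^2(\R^2_+)}+\|v\|^2_{L^2(\R^2_+)}\big)$. (One can, if preferred, avoid fractional spaces here and prove the $L^3$-trace bound by a direct Gagliardo--Nirenberg-type argument.)

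Finally I would write $\widetilde\gamma=\widetilde\gamma\,\mathbf{1}_{\{|\widetilde\gamma|\leq M\}}+\widetilde\gamma\,\mathbf{1}_{\{|\widetilde\gamma|>M\}}=:\gamma_1+\gamma_2$; by dominated convergence $\|\gamma_2\|_{L^3(\R)}\to 0$ as $M\to\infty$, so we can fix $M$ with $C_{\rm tr}\|\gamma_2\|_{L^3(\R)}\leq \tfrac14$. H\"older's inequality ($1=\tfrac13+\tfrac23$) gives $\big|\int\gamma_2|v(\cdot,0)|^2\big|\leq \|\gamma_2\|_{L^3}\|v(\cdot,0)\|^2_{L^3}\leq \tfrac14(\|\nabla v\|^2+\|v\|^2)$, while $\big|\int\gamma_1|v(\cdot,0)|^2\big|\leq M\|v(\cdot,0)\|^2_{L^2}\leq \tfrac14\|\nabla v\|^2+C_0\|v\|^2$ upon choosing $\varepsilon=1/(4M)$. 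Adding these two bounds, transferring back through $\Psi$, and using $\|\nabla u\|^2+\int\gamma|u|^2\geq \tfrac12\|\nabla u\|^2-C_0\|u\|^2\geq -C_0\|u\|^2$ yields the lemma, with $r_0$ fixed beforehand depending only on $\partial\Omega$ near $x_0$ and $C_0$ depending on $\gamma$ through $M$. The only delicate point is that the constant multiplying $\|\nabla u\|^2$ must be made strictly less than $1$ \emph{uniformly in $\gamma$}; this is exactly why one splits $\gamma$ rather than merely shrinking $r_0$, and everything else is routine.
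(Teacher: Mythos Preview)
Your argument is correct. The route, however, differs from the paper's. After the same boundary-flattening step, the paper does \emph{not} invoke the trace embedding $H^1(\R^2_+)\to L^3(\R)$; instead it writes $\gamma(s)|u(s,0)|^2=-2\int_0^\infty\gamma(s)\,u\,\partial_t u\,dt$, applies Cauchy--Schwarz in $t$ to get $-\epsilon\int|\partial_t u|^2-4\epsilon^{-1}\int|\gamma(s)u|^2$, and then handles $\int_\R|\gamma(s)u(s,t)|^2\,ds$ slice-by-slice in $t$ via the one-dimensional fact that multiplication by $\gamma\in L^3(\R)$ is $(-i\,d/ds)$-compact, hence form-bounded with arbitrarily small relative bound. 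Your approach trades this operator-theoretic input for the $L^3$ trace bound plus an explicit truncation $\gamma=\gamma\mathbf{1}_{|\gamma|\leq M}+\gamma\mathbf{1}_{|\gamma|>M}$; this is more elementary and self-contained, at the cost of quoting (or proving) the Sobolev trace $H^1(\R^2_+)\hookrightarrow L^3(\R)$. Both arguments produce the same small-relative-bound structure and the same conclusion; neither is strictly better, though yours is closer to what a reader without the Kato--Rellich toolkit would reconstruct.
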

\begin{proof}
Select $r_0$ sufficiently small such that the coordinate
transformation in \eqref{BC} is defined. Using these coordinates, we
may view the function $u$ as a  function in
$C_0^\infty(\overline{\R^2_+})$. In the same way, we may view the
function $\gamma$ in $L^3(\R)$. We have,
\begin{multline}\label{eq:app}
\|\nabla
u\|^2_{L^2(B(x_0,r_0))}+\int_{\overline{B(x_0,r_0)}\cap\partial\Omega}\gamma(x)
|u(x)|^2\,ds(x)\\
\geq C\int_{\R_+}\int_{\R}\left(|\partial_t u|^2+|\partial _s
u|^2\right)\,ds\,dt+\int_\R\gamma(s)|u(s,0)|^2\,ds\,,\end{multline}
where $C>0$ is a constant.

We have the simple identity,
$$\gamma(s)|u(s,0)|^2=-2\int_0^\infty\gamma(s)\,u(s,t)\,\partial_tu(s,t)\,dt\,.$$
By the Cauchy-Schwarz inequality, we get for all $\epsilon>0$,
$$
\gamma(s)|u(s,0)|^2\geq -\epsilon\int_0^\infty|\partial_t
u(s,t)|^2\,dt-4\epsilon^{-1}\int_0^\infty|\gamma(s)\,u(s,t)|^2\,dt\,.$$ We integrate both sides with respect to $s$ to obtain,
\begin{equation}\label{eq:app1}
\int_\R\gamma(s)|u(s,0)|^2ds\geq -\epsilon\int_{\R_+}\int_\R|\partial_t
u(s,t)|^2\,ds\,dt-4\epsilon^{-1}\int_{\R_+}\int_\R|\gamma(s)\,u(s,t)|^2\,ds\,dt\,.\end{equation}
Since $\gamma\in L^3(\R)$, then the operator
$$v\mapsto\gamma v$$
is $i\frac{d}{ds}$-compact. Thus, for all $a\in(0,1)$, there exists
a constant $b>0$ such that,
$$\int_\R|\gamma(s)\,u(s,t)|^2\,ds\leq
a\int_\R|\partial_su(s,t)|^2\,ds+b\int_\R|u(s,t)|^2\,ds\,.$$
Inserting this into \eqref{eq:app1} and then inserting the resulting
inequality into \eqref{eq:app}, we get,
\begin{multline*}
\|\nabla
u\|^2_{L^2(B(x_0,r_0)}+\int_{\overline{B(x_0,r_0)}\cap\partial\Omega}\gamma(x)
|u(x)|^2\,ds(x)\\
\geq \int_{\R_+}\int_\R\Big((C-\epsilon)|\partial_t
u|^2+(C-4\epsilon^{-1}a)|\partial_su|^2-4\epsilon^{-1}b|u|^2\Big)\,ds\,dt\,.\end{multline*}
We select $\epsilon=C$ and $a$ sufficiently small such that
$C-4\epsilon^{-1}a>0$. Returning to cartesian coordinates, we get
the estimate in Lemma~\ref{lem:app}.
\end{proof}


\begin{thebibliography}{100}
\bibitem{Bo1} V. Bonnaillie. Analyse math\'{e}matique de la supraconductivit\'{e} dans un domaine \`{a} coins: m\'{e}thodes semi-classiques et
        num\'{e}riques. Th\`{e}se de doctorat, Universit\'{e} Paris 11 (2003).
\bibitem{CdV} Y. Colin de Verdi\`ere. L'asymptotique de Weyl pour
les bouteilles magn\'etique. {\it Comm. Math. Phys.} {\bf 105} (1986),
327--335 (French).
\bibitem{CFFH} H.D. Cornean, S. Fournais, R.L. Frank, B. Helffer. Sharp trace asymptotics for a class of 2D-magnetic
operators. {\it Ann. Inst. Fourier}, to appear.{\clr published; add
details}
\bibitem{CKP} M. Coffeng, A. Kachmar,
M. Persson-Sundqvist. Clusters of eigenvalues of the magnetic
Laplacian with Robin condition. Preprint.
        \bibitem{DaHe} M. Dauge, B. Helffer. Eigenvalues variation I, Neumann problem for Sturm-Liouville operators. \textit{Journal of differential Equations},{ \bf
    104} (1993), no. 2, 243--262.
    \bibitem{de-Gennes} P.G. deGennes. {\it{Superconductivity of Metals and Alloys}}. Benjamin (1966).
     \bibitem{Er-So} L. Erd\"os, J.P. Solovej. Semiclassical eigenvalue estimates for the Pauli operator with strong non-homogeneous magnetic fields. II. Leading order asymtotic estimates. {\it Comm. Math. Phys.} {\bf 188} (1997), 599-656.
 \bibitem{FH-b} S. Fournais, B. Helffer. {\it{Spectral methods in surface superconductivity}}. Progress in Nonlinear Differential Equations and Their
        Applications, Vol. 77. Birkh\"{a}user Boston (2010).
         \bibitem{Fo-Ka} S. Fournais, A. Kachmar. On the energy of bound states for magnetic Schr\"{o}dinger operators. {\it J.
Lond. Math. Soc.} {\bf 80} (2009), no. 1, 233--255.
         \bibitem{Fr} R.L. Frank. On the asymtotic number of edge states for magnetic Schr\"{o}dinger operators. \textit{Pro. London Math. Soc. (3)} {\bf{95}} (2007), no. 1, 1--19.
\bibitem{FL} R.L. Frank, A. Laptev. Spectral inequalities for Schr\"odinger operators with surface potentials. {\it Spectral theory of differential operators,}
T. Suslina and D. Yafaev (eds.), 91 - 102, Amer. Math. Soc. Transl.
Ser. 2, 225 (2008).
\bibitem{FG} R.L. Frank, L. Geisinger. Semi-classical analysis of
the Laplalce operator with Robin boundary condition. {\it Bull. Math. Sci.} {\bf 2} (2012), no. 2, 281--319.
\bibitem{GS} T. Giorgi, R. Smits. Eigenvalue estimates and critical temperature in zero fields for enhanced surface superconductivity. {\it Z. Angew. Math. Phys.} {\bf 57} (2006), 1--22.
\bibitem{HM} B.~Helffer, A.~Morame. Magnetic bottles in connection
with superconductivity. {\it J. Func. Anal.} {\bf 181} (2001), no. 2, 604-680.
\bibitem{HP} B. Helffer, K. Pankrashkin. Tunneling between corners for Robin Laplacians. Preprint.
    \bibitem{HLW} D. Hundertmark, A. Laptev and T. Weidl. New bounds on the Lieb-Thirring constants. {\it Invent. Math.},
{\bf 40} (2000), 693--704.
\bibitem{KP} A. Kachmar, M. Persson. On the essential spectrum of magnetic Schrödinger operators in exterior domains.
{\it Arab. J. Math. Sci.} {\bf 19} (2013), no. 2, 217--222.
         \bibitem{Ka1} A. Kachmar, On the ground state energy for a magnetic Schr\"{o}dinger operator and the effect of the de Gennes boundary conditions, {\it C. R. Math. Acad. Sci. Paris} {\bf{332}} (2006), 701--706.
         \bibitem{Ka2} A. Kachmar, On the ground state energy for a magnetic Schr\"{o}dinger operator and the effect of the de Gennes boundary conditions, {\it J. Math. Phys.} {\bf{47}}
         (7) (2006) 072106, 32 pp.
 \bibitem{Ka3} A. Kachmar, On the stability of normal states for a generalized Ginzburg-Landau model, {\it{Asymptot. Anal.}} {\bf{54}} (2007), no. 3-4, 145--201.
 \bibitem{Ka5} A. Kachmar, On the perfect superconducting solution for a generalized Ginzburg-Landau equation, {\it{ Asymptot. Anal.}} {\bf{54}} (2007), no. 3-4, 125--164.
\bibitem{Ka} A. Kachmar, Probl\`{e}mes aux limites issues de la supraconductivit\'{e}, Ph. D. Thesis, University Paris-Sud/ Orsay (2007).
\bibitem{Ka4} A. Kachmar, Weyl asymptotics for magnetic Schr\"{o}dinger operator and de Gennes' boundary condition. {\it Reviews in Mathematical Physics} Vol 20 (2008), no. 8, 901--932.
\bibitem{KK} A. Kachmar, A. Khochman. Spectral asymptotics for
magnetic Schr\"odinger operators in domains with corners. {\it
J. Spectr. Theory.} {\bf 3} (2013), 553--574.
\bibitem{LP} M. Levitin, L. Parnovski. On the principal eigenvalue of a Robin problem with a large parameter. {\it Math. Nachr.} {\bf 281} (2008), 272--281.
\bibitem{LL} E. H. Lieb, M. Loss, Analysis. Second edition. Graduate Studies in Mathematics 14, American Math-
ematical Society, Providence, RI, 2001.
\bibitem{LW} A. Laptev, T. Weidl, Sharp Lieb-Thirring inequalities in high dimensions. {\it Acta Math.} {\bf 184} (2000),
no. 1, 87--111.
         \bibitem{LSY} E.H. Lieb, J.P. Solovej, J. Yngvason. Asymtotics of heavy atoms in high magnetic fields. II. Semiclassical regions. {\it{Comm. Math.
            Phys.}} {\bf{161}} (1994) (1) 77--124.
\bibitem{N} M. Nasrallah. Energy of surface states for 3D magnetic
Schr\"odinger operators. Preprint.
\bibitem{Pan} K. Pankrashkin. On the asymptotics of the principal
eigenvalue problem for a Robin problem with a large parameter in a
planar domain. {\it Nanosystems: Physics, Chemistry, Mathematics,}
2013 {\bf 4} (4), 474--483.
\bibitem{PanP} K. Pankrashkin, N. Popoff. Mean curvature bounds and
eigenvalues of Robin Laplacians. Preprint.
\bibitem{Per} A. Persson. Bounds for the discrete part of the spectrum of a semi-bounded Schr\"odinger operator. {\it{ Math. Scand.}} {\bf 8} (1960), 143--153.
\bibitem{Sob} A. Sobolev. On the Lieb-Thirring estimates for the Pauli operator. Duke J. Math. Vol. 82 (1996), no. 3, 607--635.
\bibitem{Tr} F.~Truc. Semiclassical asymptotics for magnetic bottles.
{\it Asymptot. Anal.} {\bf 15} (1997), no.~3-4, 385--395.
\end{thebibliography}
\end{document}